\newtheorem{theorem}{Theorem}
\newtheorem{corollary}{Corollary}
\newtheorem{remark}{Remark}
\newtheorem{example}{Example}
\newtheorem{proposition}{Proposition}
\newtheorem{thma}{Theorem}
\theoremstyle{definition}
\newtheorem{definition}{Definition}
\def\C{\mathbb C}
\def\R{\mathbb R}
\def\T{\mathbb T}
\def\N{\mathbb N}
\def\ba{\mathbf a}
\def\bc{\mathbf c}
\def\Z{\mathbb Z}
\begin{document}

\title[Convergence]{Convergence of Time-Average along Uniformly Behaved in $\N$ Sequences on Every Point}

\author{Yunping Jiang and Jessica Liu}


\address{Yunping Jiang: Department of Mathematics\\
Queens College of the City University of New York\\
Flushing, NY 11367-1597 \&
Department of Mathematics\\
Graduate School of the City University of New York\\
365 Fifth Avenue, New York, NY 10016}
\email{yunping.jiang@qc.cuny.edu}

\address{Jessica Liu: 
Department of Mathematics\\
Graduate School of the City University of New York\\
365 Fifth Avenue, New York, NY 10016}
\email[]{yliu11@gradcenter.cuny.edu}
\subjclass[2020]{Primary 11K65, 37A44; Secondary 11N37, 37A30}

\keywords{}


\begin{abstract}
We define a uniformly behaved in $\N$ arithmetic sequence $\ba$ and an $\ba$-mean Lyapunov stable dynamical system $f$. We consider the time-average of a continuous function $\phi$ along the $\ba$-orbit of $f$ up to $N$. The main result we prove in the paper is that this partial time-average converges for every point in the space if $\ba$ is uniformly behaved in $\N$ and $f$ is minimal and uniquely ergodic and $\ba$-mean Lyapunov stable. In addition, if $\ba$ is also completely additive, we then prove that the time-average of a continuous function $\phi$ along the square-free $\ba$-orbit of $f$ up to $N$ converges for every point in the space as well. All equicontinuous dynamical systems are $\ba$-mean Lyapunov stable for any sequence $\ba$. When $\ba$ is a subsequence of $\N$ with positive lower density, we give two non-trivial examples of $\ba$-mean Lyapunov stable dynamical systems. We give several examples of uniformly behaved in $\mathbb{N}$ sequences, including the counting function of the prime factors in natural numbers, the subsequence of natural numbers indexed by the Thue-Morse (or Rudin-Shapiro) sequence, and the sequence of even (or odd) prime factor natural numbers. We also show that the sequence of square-free natural numbers (or even (or odd) prime factor square-free natural numbers) is rotationally distributed in $\N$ but not uniformly distributed in $\Z$, thus not uniformly behaved in $\N$.  We derive other consequences from the main result relevant to number theory and ergodic theory/dynamical systems. 
\end{abstract}

\maketitle

\section{\bf Introduction}

One of the most celebrated theorems in ergodic theory is the Birkhoff ergodic theorem. It concerns the relationship between the time and the space averages for an integrable function on an ergodic measurable dynamical system.  Suppose $f: X\to X$ is a measure-preserving transformation on a probability measure space $(X, \mathcal{B}, \nu)$
and suppose $\phi$ is a $\nu$-integrable function. Then, we can consider the time average,
$$
\widehat{\phi} (x) =\lim_{N\to \infty} \frac{1}{N} \sum_{n=1}^{N} \phi (f^{n}x), \quad x\in X, 
$$
if the limit exists,
and the space average
$$
\widetilde{\phi} =\int_{X} \phi d\nu.
$$
It is clear that $\widehat{\phi} (fx) =\widehat{\phi}(x)$ and $\widehat{\phi}$ is a constant on any orbit $\{f^{n}x \}_{n=0}^{\infty}$. The Birkhoff ergodic theorem says that

\medskip
\begin{thma} 
If $f$ is ergodic, then 
$$
\widehat{\phi} (x)=\widetilde{\phi} \quad \hbox{for $\nu$-almost all $x\in X$}
$$
\end{thma}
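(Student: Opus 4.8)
The plan is to prove Theorem A along the classical route, deducing it from the \emph{maximal ergodic theorem}; this isolates the one substantial point into a single pointwise inequality and leaves everything else to soft measure theory. Throughout, abbreviate $S_N g=\sum_{n=0}^{N-1} g\circ f^{n}$ for $g\in L^{1}(X,\nu)$ (the resulting limit agrees $\nu$-a.e.\ with the one in the statement), and recall $\int_X g\circ f\,d\nu=\int_X g\,d\nu$ since $f$ preserves $\nu$. \textbf{Step 1 (maximal ergodic theorem).} For $g\in L^1$ put $M_N g=\max_{1\le k\le N}S_k g$ and $E_g=\{x:\sup_{N\ge 1}S_N g(x)>0\}=\bigcup_{N\ge 1}\{M_N g>0\}$; I would show $\int_{E_g} g\,d\nu\ge 0$. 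The crux is Garsia's pointwise inequality on $\{M_N g>0\}$: from $M_N g\ge S_1 g=g$ and $M_N g\ge S_{k+1}g=g+(S_k g)\circ f$ for $1\le k\le N-1$ one gets $g\ge M_N g-\bigl(\max_{1\le k\le N-1}S_k g\bigr)\circ f\ge M_N g-(M_N g)^{+}\circ f$ there. Integrating over $\{M_N g>0\}$, on which $M_N g=(M_N g)^{+}$, and enlarging the domain of the (nonnegative) subtracted term to all of $X$,
\[
\int_{\{M_N g>0\}} g\,d\nu\ \ge\ \int_X (M_N g)^{+}\,d\nu-\int_X (M_N g)^{+}\circ f\,d\nu\ =\ 0 .
\]
Since $\{M_N g>0\}\uparrow E_g$ and $|g\,\mathbf{1}_{\{M_N g>0\}}|\le|g|\in L^1$, dominated convergence gives $\int_{E_g} g\,d\nu\ge 0$.

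\textbf{Step 2 (a.e.\ existence of the limit).} Let $\overline\phi=\limsup_N\frac1N S_N\phi$ and $\underline\phi=\liminf_N\frac1N S_N\phi$. Both are $f$-invariant, since $\tfrac1N S_N\phi(fx)=\tfrac{N+1}{N}\cdot\tfrac{1}{N+1}S_{N+1}\phi(x)-\tfrac1N\phi(x)$ has the same $\limsup$ (resp.\ $\liminf$) in $N$ as $\tfrac1N S_N\phi(x)$. For rational $\alpha<\beta$ the set $E_{\alpha,\beta}=\{\underline\phi<\alpha\}\cap\{\overline\phi>\beta\}$ is $f$-invariant, so $S_N\bigl((\phi-\beta)\mathbf{1}_{E_{\alpha,\beta}}\bigr)=\mathbf{1}_{E_{\alpha,\beta}}\,S_N(\phi-\beta)$, and on $E_{\alpha,\beta}$ we have $\sup_N\tfrac1N S_N(\phi-\beta)\ge\overline\phi-\beta>0$; hence the maximal set for $(\phi-\beta)\mathbf{1}_{E_{\alpha,\beta}}$ is exactly $E_{\alpha,\beta}$ and Step 1 yields $\int_{E_{\alpha,\beta}}(\phi-\beta)\,d\nu\ge 0$. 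Applying Step 1 likewise to $(\alpha-\phi)\mathbf{1}_{E_{\alpha,\beta}}$ (using $\inf_N\tfrac1N S_N\phi\le\underline\phi<\alpha$ there) gives $\int_{E_{\alpha,\beta}}(\alpha-\phi)\,d\nu\ge 0$; adding, $(\alpha-\beta)\nu(E_{\alpha,\beta})\ge 0$, so $\nu(E_{\alpha,\beta})=0$. A parallel use of Step 1 on the invariant sets $\{\overline\phi=+\infty\}$ and $\{\underline\phi=-\infty\}$ (comparing $\phi$ with constants $\pm C$, $C\to\infty$, and using $\phi\in L^1$) shows these are $\nu$-null. Taking the union of all these null sets, $\overline\phi=\underline\phi$ is finite $\nu$-a.e., so $\widehat\phi(x)=\lim_N\tfrac1N S_N\phi(x)$ exists for $\nu$-a.e.\ $x$, is measurable, and is $f$-invariant.

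\textbf{Step 3 (identification and conclusion).} If $\phi$ is bounded then $|\tfrac1N S_N\phi|\le\|\phi\|_\infty$, so dominated convergence and $\int_X S_N\phi\,d\nu=N\int_X\phi\,d\nu$ give $\int_X\widehat\phi\,d\nu=\int_X\phi\,d\nu$. For general $\phi\in L^1$ and $\epsilon>0$, choose bounded $\psi$ with $\|\phi-\psi\|_1<\epsilon$; then $\widehat\phi=\widehat\psi+\widehat{\phi-\psi}$ $\nu$-a.e., while Fatou's lemma gives $\int_X|\widehat{\phi-\psi}|\,d\nu\le\liminf_N\tfrac1N\int_X|S_N(\phi-\psi)|\,d\nu\le\|\phi-\psi\|_1<\epsilon$, so $\bigl|\int_X\widehat\phi\,d\nu-\int_X\phi\,d\nu\bigr|<2\epsilon$; letting $\epsilon\to 0$, $\int_X\widehat\phi\,d\nu=\widetilde\phi$. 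Finally $\widehat\phi$ is $f$-invariant and $f$ is ergodic, hence $\widehat\phi$ equals a constant $\nu$-a.e., necessarily $\int_X\widehat\phi\,d\nu=\widetilde\phi$, which is the assertion. The genuine obstacle is Step 1: the inequality $g\ge M_N g-(M_N g)^{+}\circ f$ is the clever step and the telescoping only closes after passing to $(M_N g)^{+}$ and enlarging the integration domain; a secondary subtlety is the Step 2 bookkeeping (commuting indicators past the dynamics via invariance, excluding $\overline\phi=+\infty$), which must be settled before the rational-partition argument is legitimate. (Alternatively one could first establish von Neumann's $L^2$ mean ergodic theorem and bootstrap, but the maximal-inequality proof above is more self-contained.)
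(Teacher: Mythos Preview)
The paper does not prove Theorem~A at all: it is stated in the introduction as the classical Birkhoff ergodic theorem, serving purely as background motivation for the paper's actual results (Theorem~\ref{main1} and its corollaries). There is therefore nothing in the paper to compare your argument against.

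Your proof itself is the standard Garsia route and is essentially correct. One small wrinkle in Step~1: the intermediate inequality $g\ge M_N g-\bigl(\max_{1\le k\le N-1}S_k g\bigr)\circ f$ as written need not hold at points where the maximum $M_N g(x)$ is attained at $k^{*}=1$ and $\max_{1\le k\le N-1}S_k g(fx)<0$; the clean way is to argue directly that $M_N g(x)\le g(x)+(M_N g)^{+}(fx)$ by casing on where the maximum is attained (if $k^{*}=1$ this is immediate since $(M_N g)^{+}\ge 0$, and if $k^{*}\ge 2$ use $S_{k^{*}-1}g(fx)\le M_{N-1}g(fx)\le (M_N g)^{+}(fx)$). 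Your final pointwise inequality $g\ge M_N g-(M_N g)^{+}\circ f$ is correct, so this is only an expository glitch, not a real gap. Steps~2 and~3 are fine.
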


This theorem only guarantees that for almost all points in $X$, the time average equals the space average. 
For every point, we need to assume the unique ergodicity. That is, if  $X$ is a compact metric space, $f$ is uniquely ergodic, and $\phi$ is a continuous function, then the above theorem holds for every point in $X$, extending the applicability of the Birkhoff ergodic theorem.

%
%

Motivated by Sarnak's conjecture, we began the study of a new branch in ergodic theory/dynamical systems in~\cite{FJ,JNon,JPro,JPre}. In this new branch, we defined oscillating sequences and oscillating sequences of order $d\geq 2$ and used these oscillating properties to study the classification of dynamical systems with zero topological entropy. 

In this paper, we branch this project into studying the convergence of the partial time-average,
$$
\lim_{N\to \infty} \frac{1}{N} \sum_{n=1}^{N} \phi (f^{a_{n}}x), \quad x\in X,
$$
of a continuous function $\phi$ along a sequence $\ba=(a_{n})$ of natural numbers for a dynamical system $f$ on a compact metric space $X$, and its relationship with the space average. 
The study of the convergence of a partial time-average along a sequence $\ba=(a_{n})$ has been studied by many prominent mathematicians. However, they concentrated on the convergence for almost all points or the $L^{2}$-convergence with respect to an invariant probability measure. We give a partial list in the literature,~\cite{BKQW,B,RW,JLW}. In this paper, we concentrate on the convergence for every point in the space along a uniformly behaved in $\N$ sequence. A recent interesting paper concentrates on the convergence for every point in the space is~\cite{R} (see also its enlarged version~\cite{BR}). However,~\cite{R,BR} only considers the convergence of the partial time-average along the prime big omega sequence, which is a uniformly behaved in $\N$ sequence as defined in our paper due to the combination of the work Pillai and Selberg~\cite{P,Se} in1940s and the work of Delange~\cite{D} in 1950s in number theory. Note that~\cite{R} gives a dynamical system proof of Pillai,  Selberg, and Delange's results. Before this paper, the prime big omega sequence was the only uniformly behaved in $\N$ sequence as we knew.  In this paper, we will discover many other examples of uniformly behaved in $\N$ sequences by using the oscillating property of their characteristic sequences.  These examples have a common property: subsequences of $\N$ with density $1/2$ in $\N$. It is still an important problem for us to discover an example of a subsequence of $\N$, which is uniformly behaved in $\N$ but whose density in $\N$ is not $1/2$. 

The paper is organized as follows. In section 2, we define uniformly behaved in $\N$ sequences (Definition~\ref{uds1}) as well as rotationally distributed sequences and connect them to two traditional notions of uniform distribution: uniformly distributed modulo $1$ and uniformly distributed in $\Z$. 

Section 3 introduces the $\ba$-upper density and the $\ba$-lower density of a subset of natural numbers. We also review the definitions of the upper density and the lower density of a subset of natural numbers. In the same section, we define an $\ba$-mean Lyapunov stable dynamical system $(X,f)$. All equicontinuous dynamical systems are $\ba$-mean Lyapunov stable for any sequeces $\ba$. They are trivial examples but still important for applications in number theory (see section 5). We give two non-trivial examples of $\ba$-mean Lyapunov stable dynamical systems (Corollaries~\ref{den} and~\ref{add}) for subsequences of $\N$ with positive lower density.

In section 4,  we prove our main result that if $\ba$ is uniformly behaved in $\N$ and $(X,f)$ is $\ba$-mean Lyapunov stable, uniquely ergodic, and minimal, then for any continuous function $\phi$, the time-average of $\phi$ along the $\ba$-orbit of $f$ up to $N$ converges to the integral of $\phi$ over the unique $f$-invariant probability measure as $N$ tends to infinity for every point in the space (Theorem~\ref{main1}).  We further define an $\ba$-mean minimal attracted dynamical system and apply our main result to the convergence of the time-average of a continuous function over the $\ba$-orbit of $f$ up to $N$ for a uniformly behaved in $\N$ sequence (Corollary~\ref{fcor1}). 

Section 5 defines a square-free uniformly behaved in $\N$ sequence (Definition~\ref{sfuds1}) of natural numbers. We prove that if a uniformly behaved in $\N$ sequence $\ba$ is also completely additive, it is square-free uniformly behaved in $\N$ (Theorem~\ref{sfub}). We apply our main result to the convergence of the time-average of a continuous function along a square-free completely additive uniformly behaved in $\N$ sequence (Corollary~\ref{fcor2}). In the same section, we apply our main result to number theory. 

In section 6, we prove a theorem (Theorem~\ref{osubs}) to connect oscillating sequences of complex numbers with uniformly behaved in $\N$ sequences. Using this theorem, we give more examples of uniformly behaved in $\N$ sequences, including the subsequence of $\N$ indexed by the Thue-Morse sequence, the subsequence of $\N$ indexed by the Rudin-Shapiro sequence, and the sequence of even (or odd) prime factor natural numbers. We also show that the sequence of all square-free natural numbers, as well as the sequence of even (or odd) prime factor square-free natural numbers, is rotationally distributed in $\N$ but not uniformly distributed in $\Z$ and, thus, not uniformly behaved in $\N$ (Theorem~\ref{sfex} and Theorem~\ref{sfa}) in section 5 and section 6. 

\medskip
\medskip
\noindent {\bf Acknowledgment:} We thank Professors Enrique Pujals, Yilin Wang, and Alejandro Kocsard for useful comments and conversations. This work was completed when Y.J. visited NCTS, IMPA, and IHES, and he would like to thank these institutions
for their hospitality and support. The work is supported partially by a PSC-CUNY Enhanced award (66680-54), Simons Foundation awards (523341 and 942077), and an NSFC grant (12271185).   

\medskip
\medskip

  \section{\bf Uniformly Behaved in $\N$ Sequences}
  
 Let ${\mathbb N}$ denote the set of all natural numbers. 
 Suppose $\ba=(a_{n})$, where $n$ runs in ${\mathbb N}$, is a sequence in ${\mathbb N}$ or called an arithmetic function $\ba: \N\to \N$.  
 
 \medskip
 \begin{definition}~\label{uds1}
 We say $\ba$ is uniformly behaved in $\N$ (u.b. in $\N$) if for any $0<\theta <1$,
 \begin{equation}~\label{eq1}
 \lim_{N\to \infty} \frac{1}{N} \sum_{n=1}^{N} e^{2\pi i a_{n} \theta} =0.
 \end{equation}
 \end{definition}

 This definition is derived from two definitions of uniform distribution in number theory. The first one is the definition of uniformly distributed modulo $1$.
 
  \medskip
\begin{definition}~\label{uds2}
Let $x_{n}$ be a sequence of real numbers. We say that $x_n$ is uniformly distributed modulo 1 (u.d.1) if for all $0\leq s<t\leq 1$, we have: 
\begin{equation}~\label{eq2}
  \lim_{N\to \infty} \frac{\#\{1\leq n\leq N\;|\; s\leq x_{n} \pmod 1 \leq t\}}{N} =t-s.
  \end{equation}
\end{definition}

 \medskip
\begin{definition}~\label{uds3} 
We say $\ba$ is uniformly distributed in $\Z$ (u.d. in $\Z$) if for all $m\in\N$ and $r\in\{0,\dots,m-1\}$, the set $\{n \;|\; a_n=r\pmod m\}$ has asymptotic density 
$1/m$, that is,
 \begin{equation}~\label{eq3}
\lim_{N\to \infty} \frac{\# (\{1\leq n\leq N \;|\; a_{n}=r\pmod{m}\}}{N} =\frac{1}{m}.
\end{equation}
 \end{definition} 
  
 We introduce the following definition:
 
 \medskip
 \begin{definition}~\label{rd}
Let $\mathbf{a}=(a_n)$ be a sequence in $\mathbb{N}$. We say that $\mathbf{a}$ is rotationally distributed in $\N$ (r.d. in $\N$)  if for all irrational $0<\theta<1$, the sequence $(a_n\theta)$ is u.d.1. 
 \end{definition}
 
 \medskip
 
From the Weyl criterion (see~\cite[Chapter 1, Theorem 2.1]{KN}), we have

\medskip
\begin{theorem} 
For any irrational number $0< \theta <1$,  the sequence $x_{n}=a_{n}\theta$ of real numbers is u.d.1 if and only if
 \begin{equation}~\label{eq4}
 \lim_{N\to \infty} \frac{1}{N} \sum_{n=1}^{N} e^{2\pi i h a_{n} \theta} =0 \hbox{ for all integers $h\not= 0$}.
 \end{equation}
\end{theorem}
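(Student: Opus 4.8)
The plan is simply to recognize this statement as the classical Weyl criterion specialized to the sequence $x_n = a_n\theta$: nothing about the arithmetic nature of $(a_n)$ or the irrationality of $\theta$ enters the \emph{equivalence} itself, so the assertion is precisely \cite[Chapter 1, Theorem 2.1]{KN} applied to this particular sequence of reals. For completeness I would recall the two halves of the argument.

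For the forward implication, assume $x_n = a_n\theta$ is u.d.1. The first step is to promote Definition~\ref{uds2}, which only controls the counting averages of indicators of subintervals, to the statement
$$
\lim_{N\to\infty} \frac{1}{N}\sum_{n=1}^{N} g(\{x_n\}) = \int_{0}^{1} g(t)\,dt
$$
for every Riemann-integrable $g$ on $[0,1]$, by sandwiching $g$ between step functions whose integrals differ by at most $\varepsilon$. Applying this with $g(t) = e^{2\pi i h t}$ and using $\int_0^1 e^{2\pi i h t}\,dt = 0$ for every integer $h\neq 0$ yields \eqref{eq4}.

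For the reverse implication, suppose \eqref{eq4} holds. By linearity it gives
$$
\lim_{N\to\infty} \frac{1}{N}\sum_{n=1}^{N} P(\{x_n\}) = \int_{0}^{1} P(t)\,dt
$$
for every trigonometric polynomial $P$. Given $0\le s < t\le 1$, I would bracket the indicator $\mathbf{1}_{[s,t]}$ (viewed mod $1$) from below and above by continuous $1$-periodic functions $g^{-}\le \mathbf{1}_{[s,t]}\le g^{+}$ with $\int_0^1 (g^{+} - g^{-})\,dt < \varepsilon$, then approximate each $g^{\pm}$ uniformly to within $\varepsilon$ by a trigonometric polynomial (Fej\'er's theorem, or the trigonometric Weierstrass theorem). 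Taking $\limsup_N$ and $\liminf_N$ of the counting averages, combining the four approximations, and letting $\varepsilon\to 0$ recovers \eqref{eq2} for $x_n = a_n\theta$, i.e.\ u.d.1.

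The only point requiring care — and the sole genuine obstacle — is the reverse direction's two-stage approximation of the discontinuous indicator by trigonometric polynomials: one must choose the continuous brackets $g^{\pm}$ so that the resulting error bounds on $\frac{1}{N}\sum_{n\le N}\mathbf{1}_{[s,t]}(\{x_n\})$ are uniform in $N$, which forces a little attention near the endpoints $s$ and $t$. Everything else is a routine linearity-and-density argument, and in the paper I would simply invoke \cite[Chapter 1, Theorem 2.1]{KN}.
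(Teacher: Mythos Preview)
Your proposal is correct and matches the paper's treatment exactly: the paper does not prove this theorem but simply cites it as the Weyl criterion from \cite[Chapter 1, Theorem 2.1]{KN}, which is precisely what you propose doing. Your sketch of the two implications is the standard argument and is fine as optional elaboration, but the paper itself offers no proof beyond the citation.
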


The following theorem is found in~\cite[Chapter 5, Corollary 1.1]{KN}. 

\medskip
\begin{theorem}
A necessary and sufficient condition that $\ba$ be u.d. in $\Z$ is that 
 \begin{equation}~\label{eq5}
 \lim_{N\to \infty} \frac{1}{N} \sum_{n=1}^{N} e^{2\pi i a_{n} \theta} =0 \hbox{ for all rational numbers $0< \theta <1$}.
 \end{equation}
 \end{theorem}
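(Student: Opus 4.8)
The plan is to read the condition as a finite-dimensional Weyl criterion, applied one modulus at a time, via the orthogonality of the characters of the cyclic group $\Z/m\Z$. Fix $m\in\N$, and for $r\in\{0,\dots,m-1\}$ set $I_{r}^{(m)}(N)=\#\{1\le n\le N\mid a_{n}\equiv r\pmod m\}$, so that by Definition~\ref{uds3} the sequence $\ba$ is u.d. in $\Z$ precisely when $I_{r}^{(m)}(N)/N\to 1/m$ for every $m$ and every such $r$. The bridge to exponential sums is the elementary identity $\mathbf{1}_{\{a\equiv r\ (m)\}}=\frac1m\sum_{k=0}^{m-1}e^{2\pi i k(a-r)/m}$; summing over $1\le n\le N$, dividing by $N$, and isolating the $k=0$ term yields
\[
\frac{I_{r}^{(m)}(N)}{N}-\frac1m=\frac1m\sum_{k=1}^{m-1}e^{-2\pi i kr/m}\Bigl(\frac1N\sum_{n=1}^{N}e^{2\pi i a_{n}(k/m)}\Bigr).
\]

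For the sufficiency direction, assume~\eqref{eq5}. Then for fixed $m$ each of the finitely many inner averages above, taken at the rational $\theta=k/m\in(0,1)$ with $1\le k\le m-1$, tends to $0$, hence the right-hand side tends to $0$ and $I_{r}^{(m)}(N)/N\to 1/m$. As $m$ and $r$ were arbitrary, $\ba$ is u.d. in $\Z$.

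For the necessity direction, assume $\ba$ is u.d. in $\Z$ and let $\theta=p/q$ with $0<p<q$ integers (not necessarily coprime). Since $e^{2\pi i a_{n}p/q}$ depends only on $a_{n}$ modulo $q$, grouping the summands by the residue of $a_{n}$ modulo $q$ gives
\[
\frac1N\sum_{n=1}^{N}e^{2\pi i a_{n}\theta}=\sum_{r=0}^{q-1}e^{2\pi i rp/q}\,\frac{I_{r}^{(q)}(N)}{N}\ \longrightarrow\ \frac1q\sum_{r=0}^{q-1}e^{2\pi i rp/q}
\]
as $N\to\infty$, using $I_{r}^{(q)}(N)/N\to 1/q$. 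Because $0<\theta<1$ forces $q\nmid p$, the number $\zeta=e^{2\pi i p/q}$ is a $q$-th root of unity with $\zeta\ne1$, so $\sum_{r=0}^{q-1}\zeta^{r}=0$ and the limit is $0$, which is~\eqref{eq5}.

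I do not expect a genuine obstacle: the whole argument is the finite Fourier analysis of $\Z/m\Z$, and every limit passes through a finite sum. The only points that need a moment's attention are that $0<\theta<1$ must be invoked in the necessity step to guarantee $\zeta\ne1$ (so that the geometric series collapses to $0$), and that for each fixed $m$ one controls only finitely many exponential sums simultaneously, so no uniformity over $m$ is required.
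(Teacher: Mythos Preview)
Your argument is correct: it is precisely the finite Weyl criterion for $\Z/m\Z$, carried out via character orthogonality in both directions, and each limit passage is through a finite sum so there is no issue.

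Note, however, that the paper does not supply its own proof of this theorem; it simply quotes the result from Kuipers--Niederreiter~\cite[Chapter~5, Corollary~1.1]{KN}. Your proof is essentially the standard one found there, so there is nothing to compare.
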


Thus (\ref{eq3}) for all $m$ implies (\ref{eq1}) for all rational number $0< \theta<1$. 
Therefore, r.d. in $\N$ together with u.d. in $\Z$. implies u.b. in $\N$.

%

\section{\bf $\ba$-Mean Lyapunov Stable Dynamical Systems}

Suppose $X$ is a compact metric space with a metric $d(\cdot, \cdot)$. Suppose $f: X\to X$ is a continuous map. Then we can consider the $n$-composition 
$$
f^{n}=\underbrace{f\circ \cdots\circ f}_{n}
$$ 
and the discrete dynamical system $\{f^{n}\}_{n=0}^{\infty}$ with $f^{0}=Id$. Following the notation in number theory (refer to~\cite{Sa1,Sa2}), we sometimes call $f$ a flow to simplify the notation. 
For any $x\in X$, 
$$
Orb(x) =\{f^{n}x\}_{n=0}^{\infty}
$$ 
denotes the orbit of $x$ under $f$.
The flow $f$ is said to be minimal if for every $x\in X$, the closure of the orbit of $x$ under $f$ is $X$, that is, 
$$
\overline{Orb(x)}=X.
$$

Let $M(X)$ be the space of all finite Borel measures $\nu$ on $X$ and let $C(X)$ be the space of all continuous functions $\phi$ on $X$ with the maximum norm
$$
\|\phi\| =\max_{x\in X} |\phi(x)|.
$$
By the Riesz representation theorem, $M(X)$ is the dual space of $C(X)$. We have 
$$
<\phi, \nu> =\int_{X} \phi d\nu, \quad  \phi\in C(X),\; \nu \in M(X),
$$
which gives a weak topology on $M(X)$. We say $\nu\in M(X)$ is a probability measure if $\nu (X)=1$.
Let $M_{1} (X)$ be the set of all probability measures. We say $\nu\in M_{1}(X)$ is $f$-invariant if $\nu (f^{-1}(A)) =\nu (A)$ for all Borel subsets $A$ in $X$. Let $M_{1,f}(X)$ be the set of all $f$-invariant probability measures. 
The set $M_{1,f}(X)$ is a non-empty convex compact subset in $M(X)$. The set of all extremal points on $M_{1,f}(X)$ are those ergodic $f$-invariant probability measures. Then, $f$ is uniquely ergodic if $M_{1,f} (X)$ contains only one point.  

Let $\ba$ be a 
sequence in $\mathbb{N}$, and let $E$ be a subset of $\mathbb{N}$. The $\ba$-upper density and the $\ba$-lower density of $E$ are
$$
\overline{D}_{\ba}(E) = \limsup_{N\to\infty} \frac{\# \{ a_{n}\in E \;|\; 1\leq n\leq N\}}{N},
$$
and 
$$
\underline{D}_{\ba}(E) = \liminf_{N\to\infty} \frac{\# \{ a_{n}\in E \;|\; 1\leq n\leq N\}}{N},
$$
where $\{ a_{n}\in E \;|\; 1\leq n\leq N\}$ is counted with multipicity, and the $\ba$-density of $E$ is
$$
D_{\ba}(E) = \lim_{N\to\infty} \frac{\# \{ a_{n}\in E \;|\; 1\leq n\leq N\}}{N}
$$
if the limit exists. 
In number theory, the upper density and the lower density of $E$ are defined as 
$$
\overline{D}(E) = \limsup_{N\to\infty} \frac{\# (E\cap [1, N])}{N}
$$
and
$$
\underline{D}(E) = \liminf_{N\to\infty} \frac{\# (E\cap [1, N])}{N}
$$
and the density of $E$ is also defined as 
$$
D(E) = \lim_{N\to\infty} \frac{\# ( E\cap [1,N])}{N}
$$
if the limit exists. 

\medskip
\begin{definition}[$\ba$-Mean Lyapunov Stable]~\label{amls}
Given a sequence $\ba =(a_{n})$ in $\N$, we say $f$ is $\ba$-mean Lyapunov stable ($\ba$-MLS)
	if for every $\epsilon > 0$, there is a $\delta > 0$ and $E=E_\epsilon\subset \mathbb{N}$ such that
	$d(x, y) <\delta$ implies $d(f^{a_{n}} x,f^{a_{n}}y) <\epsilon$ for all $n\in \mathbb{N}\backslash E$ with $\overline{D}_\ba(E)<\epsilon$. 

\end{definition}

\medskip
\begin{remark}
When ${\bf n}=(n)$, ${\bf n}$-mean Lyapunov stable is equivalent to mean Lyapunov stable (MLS), which is defined in~\cite{Fo} (see~\cite{FJ} as well).
\end{remark}

\medskip
\begin{definition}[Equicontinuity]~\label{eqcn}          
We say $f$ is equicontinuous if for every $\epsilon > 0$,
there is a $\delta > 0$ such that whenever $x, y \in X$ with $d(x, y) <\delta$,
we have $d(f^n x,f^n y) <\epsilon$ for all $n\in \N$.
\end{definition}

It is clear that if $f$ is equicontinuous, then it is $\ba$-MLS for any $\ba$. It is well known that a surjective $f$ is equicontinuous if and only if there exists a compatible metric $\widetilde{d} (\cdot, \cdot)$ on $X$ such that $f$ is an isometry with respect to $\widetilde{d} (\cdot, \cdot)$, that is, $\widetilde{d} (fx,fy)=\widetilde{d} (x, y)$ for all $ x, y \in X$. Thus, if $f$ is surjective and equicontinuous, then $f$ must be a homeomorphism. Moreover, it is also known if $f$ is a homeomorphism and equicontinuous,
then $f$ is minimal on the closure $\overline{Orb(x)}$ of the forward orbit for any $x\in X$  (see ~\cite{Pe}). Thus, an equicontinuous transitive homeomorphism $f$ is always conjugate to a minimal rotation on a compact abelian metric group. In this case, let $\nu$ be the unique Haar probability measure on $X$. Then, the measurable dynamical system $(X,f,\nu)$ has discrete spectrum. An equicontinuous homeomorphism $f$ can be decomposed into minimal subsystems. Equicontinuous flows are trivial examples of flows which are $\ba$-MLS for any $\ba$. \\

To present some non-trivial examples, we first need to show the connection between $ \ba$-MLS systems and MLS systems.

\medskip
\begin{proposition}~\label{dad}
Suppose $\ba$ is a subsequence of $\mathbb{N}$. Consider it as a subset of $\mathbb{N}$. If $\underline{D} (\ba) >0$. Then for any subset $E\subset \mathbb{N}$,  
$$
\overline{D}_{\ba} (E)\leq \frac{\overline{D} (E)}{\underline{D}(\ba)}.
$$
\end{proposition}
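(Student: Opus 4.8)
The plan is to convert the $\ba$-counting function into an ordinary counting function by the change of variable $M = a_N$. Since $\ba$ is a subsequence of $\N$, we may take $a_1 < a_2 < \cdots$, so that the terms $a_1,\dots,a_N$ are exactly the elements of $\ba$ lying in $[1, a_N]$; in particular $N = \#(\ba \cap [1, a_N])$, and because the $a_n$ are distinct the multiplicity-counted quantity satisfies
$$
\#\{a_n \in E \;|\; 1 \le n \le N\} \;=\; \#\bigl(E \cap \ba \cap [1, a_N]\bigr) \;\le\; \#\bigl(E \cap [1, a_N]\bigr).
$$

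First I would record, for every $N$, the chain
$$
\frac{\#\{a_n \in E \;|\; 1 \le n \le N\}}{N}
\;\le\; \frac{\#(E \cap [1, a_N])}{\#(\ba \cap [1, a_N])}
\;=\; \frac{\#(E \cap [1, a_N])/a_N}{\#(\ba \cap [1, a_N])/a_N}.
$$
Since $a_N \to \infty$ as $N \to \infty$, the right-hand side runs over a subsequence of the values $M \mapsto \dfrac{\#(E\cap[1,M])/M}{\#(\ba\cap[1,M])/M}$. Taking $\limsup_{N\to\infty}$ and using that passing to a subsequence can only decrease the $\limsup$, it suffices to bound $\limsup_{M\to\infty}\dfrac{\#(E\cap[1,M])/M}{\#(\ba\cap[1,M])/M}$. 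For each fixed $M$ this quotient is at most
$$
\frac{\sup_{M' \ge M}\#(E\cap[1,M'])/M'}{\inf_{M' \ge M}\#(\ba\cap[1,M'])/M'},
$$
where the hypothesis $\underline{D}(\ba) > 0$ guarantees the denominators are eventually positive; letting $M \to \infty$, the right-hand side tends to $\overline{D}(E)/\underline{D}(\ba)$. Combining, $\overline{D}_{\ba}(E) \le \overline{D}(E)/\underline{D}(\ba)$, as claimed.

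The only point requiring care — and it is mild — is that one cannot in general split the $\limsup$ of a quotient into a quotient of $\limsup$ and $\liminf$; this is why I pass through the fixed-$M$ bound by $\sup/\inf$ before sending $M \to \infty$, and it is exactly here that the positivity assumption $\underline{D}(\ba) > 0$ enters, keeping the denominators bounded away from zero. Everything else is bookkeeping: the identification $N = \#(\ba \cap [1, a_N])$ and the inclusion $E \cap \ba \cap [1, a_N] \subseteq E \cap [1, a_N]$.
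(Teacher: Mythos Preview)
Your proof is correct and follows essentially the same route as the paper's: both use the identification $N=\#(\ba\cap[1,a_N])$ to rewrite the $\ba$-count as $\dfrac{\#(E\cap[1,a_N])}{a_N}\cdot\dfrac{a_N}{N}$ (equivalently your quotient form), and then pass to the $\limsup$. The only difference is cosmetic: the paper splits the $\limsup$ of the product via $\limsup(A_NB_N)\le(\limsup A_N)(\limsup B_N)$ for nonnegative sequences, whereas you bound the quotient by $\sup/\inf$ before letting $M\to\infty$; your version is arguably the more careful of the two, since it makes explicit where the hypothesis $\underline{D}(\ba)>0$ is used.
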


\begin{proof}
Consider 
$$
\frac{\# \{ a_{n}\in E \;|\; 1\leq n\leq N\}}{N}.
$$   
Since $\ba$ is a subsequence of $\mathbb{N}$, we have 
$$
\frac{\# \{ a_{n}\in E \;|\; 1\leq n\leq N\}}{N}=\frac{\# \{ \ba\cap E \;|\; 1\leq n\leq N\}}{N}
$$
$$
= \frac{\# (E\cap [1, a_{N}])}{N} =\frac{\# (E \cap[1,a_{N}])}{a_N} \frac{a_N}{N}.
$$ 
By taking the upper limit, we get
$$
\overline{D}(E) =\limsup_{N\to \infty} \frac{\# \{ a_{n}\in E \;|\; 1\leq n\leq N\}}{N} 
$$
$$
\leq \limsup_{N\to \infty} \frac{\# (E \cap [1, a_{N}])}{a_N} \limsup_{N\to \infty} \frac{a_N}{N}
$$
$$
= \frac{\limsup_{N\to \infty} \frac{\# (E\cap [1, a_{N}])}{a_N}}{\liminf_{N\to \infty} \frac{N}{a_N}}
=\frac{\overline{D}(E)}{\underline{D}(\ba)}.
$$    
\end{proof}

\medskip
\begin{corollary}~\label{MSLaMSL}
Suppose $f$ is MLS. Then $f$ is $\ba$-MLS for any subsequence $\ba$ with $\underline{D}(\ba) >0$. 
\end{corollary}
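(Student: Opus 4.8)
The plan is to unwind both definitions and show that the "bad set" witnessing the MLS property for $f$ can be reused, after inflating $\epsilon$ by the factor $1/\underline{D}(\ba)$, to witness the $\ba$-MLS property. Concretely, fix $\epsilon>0$. Because the claim we want has $\epsilon$ on both sides (on the $\ba$-upper density bound and on the metric estimate), it suffices to produce, for every $\epsilon>0$, a $\delta>0$ and a set $E'$ with $\overline{D}_\ba(E')$ controlled by a quantity that tends to $0$ with $\epsilon$; a standard reparametrization $\epsilon\mapsto \epsilon':=\min\{\epsilon,\epsilon\cdot\underline{D}(\ba)\}$ (or simply applying the MLS hypothesis at the level $\epsilon':=\epsilon\,\underline{D}(\ba)$) then gives exactly the form in Definition~\ref{amls}.

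First I would invoke the MLS hypothesis at the threshold $\epsilon' = \epsilon\,\underline{D}(\ba) > 0$ (this is positive precisely because $\underline{D}(\ba)>0$), obtaining a $\delta>0$ and a set $E\subset\N$ with $\overline{D}(E)<\epsilon'$ such that $d(x,y)<\delta$ implies $d(f^{n}x,f^{n}y)<\epsilon$ for all $n\in\N\setminus E$. Now set $E' = \{\, m\in\N \;|\; a_m\in E \,\}$, i.e. the set of indices $m$ for which the $\ba$-orbit step $a_m$ lands in the MLS-bad set $E$. Then for $m\in\N\setminus E'$ we have $a_m\notin E$, hence $d(x,y)<\delta$ implies $d(f^{a_m}x,f^{a_m}y)<\epsilon$, which is exactly the metric conclusion required for $\ba$-MLS. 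It remains only to bound $\overline{D}_\ba(E')$.

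The density bound is where Proposition~\ref{dad} does all the work: by definition $\overline{D}_\ba(E') = \limsup_N \frac{1}{N}\#\{a_m\in E \mid 1\le m\le N\}$, which is literally $\overline{D}_\ba(E)$ in the notation of that proposition, so $\overline{D}_\ba(E')\le \overline{D}(E)/\underline{D}(\ba) < \epsilon'/\underline{D}(\ba) = \epsilon$. Thus $\delta$ and $E'$ witness that $f$ is $\ba$-MLS at level $\epsilon$, and since $\epsilon>0$ was arbitrary we are done.

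I do not anticipate a genuine obstacle here; the statement is essentially a bookkeeping consequence of Proposition~\ref{dad}. The only point requiring a little care is the order of quantifiers in Definition~\ref{amls}: one must choose the MLS threshold to be the \emph{smaller} quantity $\epsilon\,\underline{D}(\ba)$ (which is $\le\epsilon$ since $\underline{D}(\ba)\le 1$, so the metric conclusion at that finer scale certainly implies it at scale $\epsilon$) so that after dividing by $\underline{D}(\ba)$ the density bound comes out below $\epsilon$ rather than merely below $\epsilon/\underline{D}(\ba)$. With that normalization the proof is a direct substitution.
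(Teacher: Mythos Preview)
Your proof is correct and follows essentially the same route as the paper: apply the MLS hypothesis at the rescaled threshold $\epsilon' = \underline{D}(\ba)\,\epsilon$, then invoke Proposition~\ref{dad} to convert the bound $\overline{D}(E)<\epsilon'$ into $\overline{D}_\ba(E)<\epsilon$. The paper's proof is terser---it reuses the same set $E$ rather than introducing your pulled-back index set $E'$---but the argument is identical in content. One small notational point: your displayed identity ``$\overline{D}_\ba(E') = \limsup_N \frac{1}{N}\#\{a_m\in E \mid 1\le m\le N\}$'' is really $\overline{D}_\ba(E)$, not $\overline{D}_\ba(E')$, under the paper's literal definition of $\overline{D}_\ba$; since the paper's own formulation of $\ba$-MLS takes the exceptional set as a set of times (so that $\overline{D}_\ba(E)$ is exactly the quantity to bound), you can simply drop $E'$ and work with $E$ directly, as the paper does.
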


\begin{proof}
    Since $f$ is MLS, for any $\epsilon>0$, there exists $\delta>0$ such that for any $x, y\in X$ with $d(x,y)<\delta$, we have a subset $E\subset \N$ with $\overline{D}(E)<\underline{D}(\ba) \epsilon$, such that  $d(f^{n}x,f^{n}y)<\epsilon$ for all $n\in {\N}\setminus E$. Thus we have that $\overline{D}_{\ba}(E) <\epsilon$ and $d(f^{a_n}x,f^{a_n}y)<\epsilon$ for all $n\in {\N}\setminus E$. 
    This implies that $f$ is $\ba$-MLS.  
\end{proof}

Suppose $\ba$ is a sequence in $\N$. For a given $\phi\in C(X)$, denote 
$$
S_{N, \ba}\phi (x)=\frac{1}{N} \sum_{n=1}^{N} \phi \circ f^{a_{n}}(x)
$$
as the time-average of $\phi$ along $\ba$ at point $x\in X$ up to $N$.

\medskip
\begin{proposition}~\label{meaneq}
Suppose $f$ is $\ba$-MLS. Then for any $\phi\in C(X)$,
$\{S_{N,\ba}\phi\}$
is an equicontinuous sequence in $C(X)$.
\end{proposition}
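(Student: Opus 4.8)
The plan is to exploit two facts: $\phi$ is uniformly continuous on the compact metric space $X$, and the $\ba$-MLS property controls $d(f^{a_{n}}x,f^{a_{n}}y)$ for every $x,y$ that are close, off a set of indices of small $\ba$-upper density. I would fix $\phi\in C(X)$ (the case $\phi\equiv 0$ being trivial) and, given $\epsilon>0$, aim to produce a single $\delta>0$ that works uniformly in $N$. The idea is to split $\{1,\dots,N\}$ into the ``good'' indices, where the $\ba$-MLS estimate forces $f^{a_{n}}x$ and $f^{a_{n}}y$ to stay close, and the ``bad'' indices, which lie in the exceptional set $E$ and are asymptotically negligible.

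For the ``large $N$'' part, I would first use uniform continuity of $\phi$ to pick $\eta_{1}>0$ with $|\phi(u)-\phi(v)|<\epsilon/2$ whenever $d(u,v)<\eta_{1}$, and then set $\eta=\min\{\eta_{1},\ \epsilon/(4\|\phi\|)\}>0$. Applying Definition~\ref{amls} with $\eta$ in place of $\epsilon$ produces $\delta_{1}>0$ and $E\subset\N$ with $\overline{D}_{\ba}(E)<\eta$ such that $d(x,y)<\delta_{1}$ gives $d(f^{a_{n}}x,f^{a_{n}}y)<\eta$ for every $n$ with $a_{n}\notin E$. Since $\overline{D}_{\ba}(E)<\eta$ is a $\limsup$ strictly below $\eta$, there is $N_{0}$ beyond which $\#\{1\le n\le N:\ a_{n}\in E\}/N<\eta$. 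Splitting $S_{N,\ba}\phi(x)-S_{N,\ba}\phi(y)$ into the sum over good indices (bounded by $\epsilon/2$ via the choice $\eta\le\eta_{1}$) plus the sum over bad indices (bounded by $2\|\phi\|\,\eta\le\epsilon/2$) then yields $|S_{N,\ba}\phi(x)-S_{N,\ba}\phi(y)|<\epsilon$ for all $N>N_{0}$ whenever $d(x,y)<\delta_{1}$.

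Finally, for the finitely many $N\le N_{0}$, each $S_{N,\ba}\phi=\frac1N\sum_{n=1}^{N}\phi\circ f^{a_{n}}$ is a finite sum of continuous functions on a compact space, hence uniformly continuous; taking $\delta_{2}>0$ below the moduli of continuity of all of these and setting $\delta=\min\{\delta_{1},\delta_{2}\}$ finishes the argument. I expect the only genuine obstacle to be exactly this small-$N$ regime: the $\ba$-MLS hypothesis bounds the exceptional set only in an asymptotic density sense, so on an initial segment of $\N$ the exceptional indices may form a large fraction and the $\ba$-MLS estimate is vacuous there; the remedy is the elementary observation above, but it is the step one must not overlook.
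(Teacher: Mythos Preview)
Your argument is correct and follows essentially the same route as the paper's proof: pick $\eta$ from the uniform continuity of $\phi$ and small relative to $\|\phi\|$, invoke $\ba$-MLS at level $\eta$ to get $\delta_{1}$ and an exceptional set of small $\ba$-upper density, split $\{1,\dots,N\}$ into good and bad indices for $N$ large, and handle the finitely many small $N$ by the uniform continuity of a finite family. The only cosmetic difference is that you phrase the exceptional condition as ``$a_{n}\in E$'' rather than ``$n\in E$''; either reading makes the estimate go through, and your version is in fact the one that meshes directly with the definition of $\overline{D}_{\ba}$.
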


\begin{proof}
By uniform continuity of $\phi$, for any $\epsilon >0$ there exists $\eta >0$ such that for any $x, y\in X$ with $d(x, y)< \eta$, 
\begin{equation}\label{uniform_con}
 |\phi(x)- \phi(y)|<\frac{\epsilon}{2}.
\end{equation}
 We can take $\eta \le \frac{\epsilon}{4\|\phi\|_\infty}$.
 Since $f$ is $\ba$-MLS, there is a $\delta' > 0$ such that $x, y\in X$ with
$d(x, y) <\delta'$ implies $d(f^{a_{n}} x,f^{a_{n}} y) <\eta$ for all positive integers $n$ except a set, denoted $E$, of $\ba$-upper density less than $\eta$.
There exists an integer $N^*$ such that
$$
     \frac{\# \{a_{n}\;|\; n\in E, 1\leq n\leq N \}}{N} < \eta, \qquad \forall N \ge N^*
$$
Thus if $N\ge N^*$ we have
\begin{equation}\label{difference*}
    |S_{N,\ba}\phi(x) - S_{N,\ba} \phi(y)| < 2 \|\phi\|_\infty \eta + \frac{\epsilon}{2}= \epsilon.
\end{equation}
 
The family $\{S_{1,\ba}\phi, \cdots, S_{N^*,\ba}\phi\}$ of a finite number of continuous functions on the compact metric space $X$ is equicontinuous. Thus there exists $\delta''$
such that
\begin{equation}\label{difference**}
      d(x, y)<\delta'' \Rightarrow \max_{1\le N\le N^*} |S_{N,\ba}\phi(x) -S_{N,\ba}\phi(y)|<\epsilon.
\end{equation}
Therefore, we prove the proposition by combining (\ref{difference*}) and (\ref{difference**}).
\end{proof}

In view of the proof of Proposition~\ref{meaneq}, we give the following concept. 

\medskip
\begin{definition}[$\ba$-Mean Equicontinuity]~\label{aeqcn}    
We say $f$ is $\ba$-mean equicontinuous ($\ba$-MEQ for short) at a point $x \in X$ if for every $\epsilon > 0$, there is
$\delta > 0$ such that for every $y \in X$ with $d(y, x)<\delta$ we have
\begin{equation}\label{MEC}
    \limsup_{n\to\infty} \frac{1}{N} \sum_{n=1}^{N} d(f^{a_{n}}x, f^{a_{n}} y) <\epsilon.
\end{equation}
We say that $f$ is $\ba$-MEQ if it is $\ba$-MEQ at every point $x\in X$. 
By the compactness of $X$, $\ba$-MEQ is equivalent to say (\ref{MEC}) holds for all $x, y\in X$ with $d(y, x)<\delta$. 
\end{definition}

\medskip
\begin{proposition}~\label{eq}
If $f$ is $\ba$-MLS, then it is $\ba$-MEQ.
\end{proposition}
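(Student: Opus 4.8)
The plan is to deduce $\ba$-MEQ directly from $\ba$-MLS by controlling the Cesàro sums $\frac1N\sum_{n=1}^N d(f^{a_n}x,f^{a_n}y)$ with the same splitting used in the proof of Proposition~\ref{meaneq}: separate the indices $n$ into the ``good'' set where $d(f^{a_n}x,f^{a_n}y)$ is small and the ``bad'' set $E$ of small $\ba$-upper density, on which we only use the trivial bound $d(f^{a_n}x,f^{a_n}y)\le\mathrm{diam}(X)$. Fix $\epsilon>0$ and let $D=\mathrm{diam}(X)<\infty$ (finite by compactness). Apply the definition of $\ba$-MLS with error parameter $\epsilon'=\min\{\epsilon/2,\ \epsilon/(2D)\}$: there is a $\delta>0$ and a set $E\subset\N$ with $\overline{D}_\ba(E)<\epsilon'$ such that $d(x,y)<\delta$ forces $d(f^{a_n}x,f^{a_n}y)<\epsilon'$ for every $n\in\N\setminus E$.

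Next I would estimate, for any $x,y$ with $d(x,y)<\delta$,
\begin{equation}\label{MEC-split}
\frac1N\sum_{n=1}^N d(f^{a_n}x,f^{a_n}y)
= \frac1N\sum_{\substack{1\le n\le N\\ n\notin E}} d(f^{a_n}x,f^{a_n}y)
+ \frac1N\sum_{\substack{1\le n\le N\\ n\in E}} d(f^{a_n}x,f^{a_n}y).
\end{equation}
The first sum on the right of~\eqref{MEC-split} is at most $\epsilon'\le\epsilon/2$ for every $N$. The second sum is at most $D\cdot\frac{\#\{1\le n\le N\,:\,n\in E\}}{N}$. Taking $\limsup_{N\to\infty}$ and using $\overline{D}_\ba(E)<\epsilon'\le\epsilon/(2D)$, the second term contributes at most $D\cdot\epsilon/(2D)=\epsilon/2$ in the limit superior. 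Hence
\begin{equation}\label{MEC-bound}
\limsup_{N\to\infty}\frac1N\sum_{n=1}^N d(f^{a_n}x,f^{a_n}y)\le\frac{\epsilon}{2}+\frac{\epsilon}{2}=\epsilon,
\end{equation}
and since this holds for all $y$ with $d(x,y)<\delta$ (the $\delta$ is uniform in $x$, as it is in the definition of $\ba$-MLS), $f$ is $\ba$-MEQ at every $x$, i.e.\ $\ba$-MEQ. Strictly one should replace $\epsilon$ by $\epsilon/2$ at the outset if one insists on the strict inequality in~\eqref{MEC}; this is a cosmetic adjustment.

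There is essentially no serious obstacle here: the argument is a one-line bookkeeping on the index set, and the only point requiring any care is that the ``bad'' set $E$ in the definition of $\ba$-MLS is controlled by its $\ba$-\emph{upper} density, which is exactly the $\limsup$ of $\#\{1\le n\le N: n\in E\}/N$ appearing in~\eqref{MEC-split} — so matching the $\limsup$ in the definition of $\ba$-MEQ with $\overline{D}_\ba$ is automatic, and no auxiliary integer $N^*$ (as in the proof of Proposition~\ref{meaneq}) is even needed because we are not trying to produce a statement valid for all $N$ but only a statement about the limit superior. The mild thing to be careful about is to choose the $\ba$-MLS error parameter small enough relative to $\mathrm{diam}(X)$ so that the trivial bound on the bad indices still yields a contribution $\le\epsilon/2$.
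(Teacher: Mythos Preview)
Your proof is correct and is exactly the argument the paper has in mind: the paper does not write out a proof of Proposition~\ref{eq} at all, stating only that it ``is similar to that of Proposition~\ref{meaneq}'', and your splitting into good indices (bounded by $\epsilon'$) and bad indices (bounded by $D\cdot\overline{D}_\ba(E)$) is precisely that adaptation. Your observation that no auxiliary threshold $N^*$ is needed here---because the $\ba$-MEQ condition~\eqref{MEC} involves only the $\limsup$, which matches the $\limsup$ defining $\overline{D}_\ba(E)$---is a nice simplification over the proof of Proposition~\ref{meaneq}.
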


The proof of this proposition is similar to that of Proposition~\ref{meaneq}.\\

We devote the rest of this section to describing two non-trivial examples of $\ba$-MLS flows when $\ba$ is a subsequence of $\mathbb{N}$ with $\underline{D}(\ba)>0$. The first example is a Denjoy counter-example in circle homeomorphisms. 

Suppose $\C$ is the complex plane. Let $\T=\{ z\in {\mathbb C} \;|\; |z|=1\}$ be the unit circle in the complex plane.
Consider a continuous map $f: \T\to \T$. We know (see, for example,~\cite{CMY, Va}) that if $f$ is equicontinuous, then either
\begin{itemize}
\item[(1)] $\deg (f)=0$ and for every $z\in \T$, $f^{n}z\to E$ as $n\to \infty$, where $E$ is the fixed point set of $f^{2}$,
\item[(2)] $\deg(f)=1$ and $f$ is topologically conjugate to a rigid rotation $R_{\rho}(z) =e^{2\pi i \rho} z$ for some $0\leq \rho <1$, or
\item[(3)] $\deg(f)=-1$ and $f^{2}$ is topologically conjugate to the identity.
\end{itemize}
The above three cases are simple and $\ba$-MLS for any $\ba$. 

We call a homeomorphism $f$ of $\T$ a circle homeomorphism.  If $f$ is orientation-reversing, then it has two fixed points. In this case $f^{2}$ is orientation-preserving, conjugate to the identity, equicontinuous and thus $\ba$-MLS for any $\ba$. 

Consider a circle homeomorphism $f$ which is orientation preserving. Every such an $f$ is semi-conjugate to a rigid rotation $R_{\rho}(z) =e^{2\pi i \rho} z: \T\to \T$ for some $0\leq \rho <1$ due a theory developed by Poincar\'e (see, for example,~\cite[Chapter 1]{J} or~\cite{Ma}). That is, we have a continuous surjective map $h: \T\to \T$ such that 
$$
h\circ f=R_{\rho}\circ h.
$$ 
We call $0\leq \rho <1$ the rotation number of $f$. If the rotation number $\rho$ is a rational number, then $f$ is equicontinuous and thus $\ba$-MLS for any $\ba$. 

Now we assume the rotation number $\rho$ is an irrational number. If $h$ is a homeomorphism, we say $f$ is conjugate to the irrational rigid rotation $R_{\rho}$. In this case, $f$ is equicontinuous and thus $\ba$-MLS for any $\ba$. 
If $h$ is not a homeomorphism, we call $f$ a Denjoy counter-example since Denjoy was the first person to show that such a circle homeomorphism exists. For a Denjoy counter-example $f$, we have a Cantor set $\Lambda=\Lambda_{f} \subset \T$ such that $f: \Lambda\to \Lambda$ is minimal and uniquely ergodic.  In~\cite{FJ}, we proved the following result.

\medskip
\begin{theorem}
Suppose $f$ is a Denjoy counter-example. Then $f: \Lambda\to \Lambda$ is MLS but not equicontinuous. 
\end{theorem}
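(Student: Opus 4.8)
The plan is to handle the two assertions separately, the failure of equicontinuity being the easier one. Suppose, for contradiction, that $f|_\Lambda\colon\Lambda\to\Lambda$ were equicontinuous; being a surjective homeomorphism, it would then admit a compatible metric $\widetilde d$ on $\Lambda$ for which it is a $\widetilde d$-isometry. Choose a complementary arc (gap) $I=(a,b)$ of $\Lambda$, so $a,b\in\Lambda$ with $a\neq b$. The gaps $\{f^n(I)\}_{n\in\Z}$ are pairwise disjoint subarcs of $\T$ (distinct components of $\T\setminus\Lambda$, pairwise distinct because $f$ has no periodic point), so $\sum_{n\in\Z}|f^n(I)|\leq1$ and hence the length $|f^n(I)|\to0$ as $n\to\infty$. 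Thus the circle distance satisfies $d(f^n a,f^n b)\leq|f^n(I)|\to0$. On the compact space $\Lambda$ any two compatible metrics are simultaneously small along sequences: if $d(p_k,q_k)\to0$, extract a subsequence with $p_k\to p$; then $q_k\to p$ as well, so $\widetilde d(p_k,q_k)\to\widetilde d(p,p)=0$, and since this holds for every subsequence, $\widetilde d(p_k,q_k)\to0$. Applying this to the pair $(f^n a,f^n b)$ gives $\widetilde d(f^n a,f^n b)\to0$; but $\widetilde d(f^n a,f^n b)=\widetilde d(a,b)$ for every $n$, forcing $a=b$, a contradiction. Hence $f|_\Lambda$ is not equicontinuous.

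For mean Lyapunov stability I would exploit the unique invariant measure. Recall that $f$, having irrational rotation number, is uniquely ergodic on $\T$, with unique invariant probability measure $\mu$ necessarily supported on the minimal set $\Lambda$; the same holds for $f^{-1}$. Moreover $\mu$ is non-atomic, since an atom would (using that atoms of a given mass are finite in number and are permuted by $f$) force a periodic orbit. Non-atomicity makes $x\mapsto\mu([x_0,x])$ continuous for a fixed $x_0\in\Lambda$, hence $(x,y)\mapsto\mu([x,y])$ (the shorter arc) is continuous near the diagonal and vanishes there; by compactness, given $\epsilon>0$ one may choose $\delta>0$ so that $x,y\in\Lambda$ with $d(x,y)<\delta$ forces $\mu([x,y])<\epsilon^2$. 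Fix such $x,y$, write $A=[x,y]$ for the short arc and $A_n=f^n(A)$. Since $f^n$ is an orientation-preserving homeomorphism of $\T$, $A_n$ is an arc and its length is $|A_n|=\mathrm{Leb}\big(f^n(A)\big)=\int_\T\mathbf{1}_A(f^{-n}t)\,dt$. As $\mu(\partial A)=\mu(\{x,y\})=0$, unique ergodicity of $f^{-1}$ gives $\tfrac1N\sum_{n=1}^N\mathbf{1}_A(f^{-n}t)\to\mu(A)$ uniformly in $t$ (squeeze $\mathbf 1_A$ between continuous functions with $\mu$-integrals near $\mu(A)$), so by bounded convergence
$$
\frac1N\sum_{n=1}^N|A_n|=\int_\T\Big(\frac1N\sum_{n=1}^N\mathbf{1}_A(f^{-n}t)\Big)\,dt\ \xrightarrow[N\to\infty]{}\ \mu(A)<\epsilon^2 .
$$

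Now set $E=E_{x,y}=\{n\in\N:|A_n|\geq\epsilon\}$. For $n\notin E$ we have $d(f^n x,f^n y)\leq|A_n|<\epsilon$, while the counting inequality $\epsilon\cdot\#(E\cap[1,N])\leq\sum_{n=1}^N|A_n|$ together with the limit above yields $\overline D(E)\leq\mu(A)/\epsilon<\epsilon$. This is precisely mean Lyapunov stability of $f|_\Lambda$ (Definition~\ref{amls} for the sequence $(n)$), completing the proof. The genuinely non-routine point is this bound on the density of exceptional times: it rests entirely on unique ergodicity — which forces the $f^{-1}$-orbit of a.e. point to spend asymptotically only a $\mu(A)$-fraction of its time in the arc $A$, uniformly in the point — combined with the uniform smallness $\mu(A)<\epsilon^2$ supplied by the non-atomicity of $\mu$. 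In a complete write-up the care would go into justifying the uniform Birkhoff convergence for the indicator of the $\mu$-continuity set $A$, and into observing that Definition~\ref{amls} is here applied with an exceptional set depending on the pair $(x,y)$.
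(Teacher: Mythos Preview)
The paper does not prove this theorem; it is quoted from \cite{FJ} without argument. So there is no ``paper's own proof'' to compare against, and the relevant question is simply whether your argument is correct. It is.

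For the failure of equicontinuity your route through the isometric metric $\widetilde d$ is fine; you can streamline it by observing that the \emph{backward} images $f^{-n}(I)$ of a gap are likewise pairwise disjoint, so their endpoints $f^{-n}a,f^{-n}b\in\Lambda$ satisfy $d(f^{-n}a,f^{-n}b)\to0$ while $d\big(f^{n}(f^{-n}a),f^{n}(f^{-n}b)\big)=d(a,b)>0$, contradicting equicontinuity of $\{f^n\}$ directly without invoking $\widetilde d$.

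For MLS your unique-ergodicity argument is clean: the key identity $|A_n|=\int_{\T}\mathbf{1}_A(f^{-n}t)\,dt$ together with uniform convergence of the $f^{-1}$-Birkhoff averages on the $\mu$-continuity set $A$ gives $\tfrac{1}{N}\sum_{n=1}^N|A_n|\to\mu(A)<\epsilon^2$, and the Chebyshev-type bound on $E=\{n:|A_n|\ge\epsilon\}$ finishes. The points you single out for care (uniform convergence for the indicator of a $\mu$-continuity set, non-atomicity of $\mu$) are exactly the ones that need it.

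You are right to flag the dependence of $E$ on the pair $(x,y)$. The literal wording of Definition~\ref{amls}, with $E=E_\epsilon$, suggests a single exceptional set chosen before $x,y$; but the paper itself uses the pair-dependent version in the proof of Corollary~\ref{MSLaMSL}, and that is also Fomin's original formulation \cite{Fo}. Your proof establishes MLS in this (standard) sense, which is what the subsequent results in the paper actually use.
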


Combined with Corollary~\ref{MSLaMSL}, we have a non-trivial example of $\ba$-MLS flow for any subsequence $\ba$ of $\mathbb{N}$ with positive lower density.

\medskip
\begin{corollary}~\label{den}
Suppose $f$ is a Denjoy counter-example and $\ba$ is a subsequence $\ba$ of $\mathbb{N}$ with $\underline{D}(\ba)>0$. Then $f: \Lambda\to \Lambda$ is $\ba$-MLS. 
\end{corollary}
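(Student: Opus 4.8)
The plan is to obtain the statement by simply chaining the two results established immediately above. The substantive input is the theorem on Denjoy counter-examples proved in~\cite{FJ}: if $f$ is a Denjoy counter-example, then the restriction $f\colon \Lambda\to \Lambda$ to its invariant Cantor set is MLS (and, in fact, not equicontinuous, although that part plays no role here). I would treat this as a black box. The only hypothesis on $\ba$ we are given is that it is a subsequence of $\mathbb{N}$ with $\underline{D}(\ba)>0$, which is precisely the hypothesis under which Corollary~\ref{MSLaMSL} converts the MLS property into the $\ba$-MLS property.

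Accordingly, I would apply Corollary~\ref{MSLaMSL} to the flow $f\colon \Lambda\to \Lambda$: since this flow is MLS and $\ba$ is a subsequence of $\mathbb{N}$ with $\underline{D}(\ba)>0$, the corollary yields at once that $f\colon \Lambda\to \Lambda$ is $\ba$-MLS, which is exactly the assertion to be proved. No further argument is needed.

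For completeness I would indicate where the quantitative content lives. It is entirely in Proposition~\ref{dad}, i.e.\ the inequality $\overline{D}_{\ba}(E)\le \overline{D}(E)/\underline{D}(\ba)$, which turns an exceptional set of small upper density for the full orbit into an exceptional set of small $\ba$-upper density. Concretely, given $\epsilon>0$ one invokes the MLS property of $f\colon \Lambda\to \Lambda$ with the tolerance $\underline{D}(\ba)\,\epsilon$ on the exceptional set, obtaining $\delta>0$ and $E\subset \mathbb{N}$ with $\overline{D}(E)<\underline{D}(\ba)\,\epsilon$ and $d(f^{n}x,f^{n}y)<\epsilon$ for all $n\notin E$ whenever $d(x,y)<\delta$; then Proposition~\ref{dad} gives $\overline{D}_{\ba}(E)<\epsilon$, and since $a_{n}\notin E$ forces $d(f^{a_{n}}x,f^{a_{n}}y)<\epsilon$, the definition of $\ba$-MLS is verified. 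I do not expect any genuine obstacle here: the corollary is a pure assembly of Proposition~\ref{dad}, Corollary~\ref{MSLaMSL}, and the Denjoy theorem from~\cite{FJ}, requiring no new estimates.
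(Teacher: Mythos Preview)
Your proposal is correct and matches the paper's own argument exactly: the paper simply states that the corollary follows by combining the theorem from~\cite{FJ} (that a Denjoy counter-example is MLS on $\Lambda$) with Corollary~\ref{MSLaMSL}, which is precisely the chaining you describe. Your added unpacking via Proposition~\ref{dad} just reproduces the proof of Corollary~\ref{MSLaMSL} and is not needed, but it is not wrong.
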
 

Our second type of example is an interval map with zero topological entropy, which is semi-conjugate to the adding machine on its attractor. This set of systems was studied in~\cite{JNon}. For the completeness of this paper, we include materials from~\cite{JNon} without proofs. 

Suppose $\R$ is the real line. Let $I=[a,b]$ be an finite interval in $\R$. Suppose $f: I\to I$ is a continuous map with zero topological entropy. Then the period of any periodic point of $f$ is $2^{n}$ due to Sarkovskii's theorem.  For any $x\in I$, let
$$
\omega (x) = \cap_{n=0}^{\infty} \overline{\{ f^{k}x \;|\; k\geq n\}}.
$$
be the $\omega$-limit set of $x$ under $f$.  Let $K=\omega (x)$ and consider $f: K\to K$, which is minimal.  We have two cases: 
\begin{itemize}
\item[(I)]  the set $K$ is finite and 
\item[(II)] the set $K$ is infinite. 
\end{itemize}

In Case (I), $K =\{ p_{0}, p_{1}, \cdots, p_{2^{n}-1}\}$ is a periodic orbit and $f: K\to K$ is equicontinuous and thus $\ba$-MLS for any $\ba$.  is

In Case (II), since the topological entropy of $f$ is zero,  $K$ contains no periodic point and we can find a sequence of sets 
$$
\eta_{n}=\{ I_{n,k}\;|\; 0\leq k\leq 2^{n}-1\},\;\; n\in\mathbb{N},
$$
of closed intervals $I_{n,k}$ such that 
\begin{itemize}
\item[1)] for each $n\geq 1$, intervals in $\eta_{n}$ are pairwise disjoint;
\item[2)] $f(I_{n, k}) =I_{n, k+1 \pmod{2^{n}}}$ for any $n\geq 1$ and $0\leq k\leq 2^{n}-1$;
\item[3)] for each $n\geq 1$ and $I_{n, k}\in \eta_{n}$, two and only two intervals $I_{n+1, k}$ and $I_{n+1, k+2^{n}}$ in $\eta_{n+1}$ are sub-intervals of $I_{n, k}$, that is,
$$
I_{n+1, k}\cup I_{n+1, k+2^{n}}\subseteq I_{n, k};
$$
\item[4)] $X\subseteq \cap_{n=1}^{\infty} \cup_{k=0}^{2^{n}-1} I_{n,k}$.
\end{itemize}
We put a symbolic coding on each interval $I_{n,k}$ in $\eta_{n}$ for $n\geq 1$ and $0\leq k\leq 2^{n}-1$.
Consider the binary expansion of $k$ as
$$
k= i_{0} + i_{1} 2+ \cdots i_{m}2^{m} +\cdots +i_{n-1}2^{n-1}
$$
where $i_{0}, i_{1}, \cdots, i_{m}, \cdots, i_{n-1} =0$ or $1$. 
Let us only remember the coding $w_{n}=i_{0}i_{1}\cdots i_{m}\cdots i_{n-1}$ and label $I_{n, k}$ by $w_{n}$, that is, $I_{n, k}=I_{w_{n}}$. 

Define the space of codings
$$
\Sigma_{n} =\{ w_{n} =i_{0}i_{1}\cdots i_{m}\cdots i_{n-1}\;|\; i_{m}\in \{0,1\},\; m=0, 1, \cdots, n-1\}
$$
with the product topology.  
We have a natural shift map 
$$
\sigma_{n}(w_{n}) =i_{1}\cdots i_{m}\cdots i_{n-1}: \Sigma_{n}\to \Sigma_{n-1}.
$$
It is a continuous $2$-to-$1$ map. Then we have an inverse limit system 
$$
\{ (\sigma_{n}: \Sigma_{n}\to \Sigma_{n-1})\;; \; n\in \mathbb{N}\}.
$$
Let 
$$
(\sigma: \Sigma \to \Sigma) =\lim_{\longleftarrow} \;(\sigma_{n}: \Sigma_{n}\to \Sigma_{n-1}).
$$
Explicitly, we have that
$$
\Sigma=\{ w =i_{0}i_{1}\cdots i_{n-1}\cdots \;|\; i_{n-1}\in \{0,1\},\; n\in \mathbb{N} \}
$$
and
$$
\sigma: w=i_{0}i_{1}\cdots i_{n-1}\cdots \to \sigma(w) =i_{1}\cdots i_{n-1}\cdots. 
$$
Consider the metric $d(\cdot, \cdot)$ on $\Sigma$ defined as 
$$
d (w, w') =\sum_{n=1}^{\infty} \frac{|i_{n-1}-i_{n-1}'|}{2^{n}}
$$
for $w=i_{0}i_{1}\cdots i_{n-1}\cdots$ and $w'=i_{0}'i_{1}'\cdots i_{n-1}'\cdots$. 
It induces the same topology as the topology from the inverse limit and makes $\Sigma$ a compact metric space. 

On $\Sigma$, we have a map called the {\em adding machine} denoted as $add$ and defined as follows: for any $w=i_{0}i_{1}\cdots \in \Sigma$, consider the formal binary expansion 
$a=\sum_{n=1}^{\infty} i_{n-1}2^{n-1}$, then $a+1= \sum_{n=1}^{\infty} i_{n-1}'2^{n-1}$ has a unique formal binary expansion. 
Here ``a+1'' is just a notation. It means if $i_{0}=0$, then $i_{0}'=1$ and all other $i_{n}'=i_{n}$ and if $i_{0}=1$, then $i_{0}'=0$ and then consider $i_{1}+1$, and so on.
The adding machine is 
$$
add (w) =i_{0}'i_{1}'\cdots i_{n-1}'\cdots: \Sigma\to \Sigma.
$$
It is a homeomorphism of $\Sigma$ to itself. Moreover, we have
 
For a point $w=i_{0}i_{1}\cdots i_{n-1}\cdots \in \Sigma$, let $w_{n}=i_{0}i_{1}\cdots i_{n-1}$. 
We use 
$$
[w]_{n}=\{ w'\in \Sigma \;|\; w_{n}'=w_{n}\}
$$
to denote the $n$-cylinder containing $w$. 
One can check that for every $w\in \Sigma$, 
$$
\cdots \subset I_{w_{n}} \subset I_{w_{n-1}}\subset \cdots \subset I_{w_{2}}\subset I_{w_{1}},
$$
is a nested sequence of closed intervals. Therefore
$$
I_{w} =\cap_{n=1}^{\infty} I_{w_{n}}
$$
is a non-empty, connected, and compact subset of $I$. Moreover, we have that $f(I_{w}) =I_{add(w)}$ and
$$
K\subseteq \cup_{w\in \Sigma} I_{w}.
$$ 

The set $\mathcal{C}=\{ [w]_{n} \;| \; w\in \Sigma, n\in \mathbb{N}\}$ of all $n$-cylinders forms a topological basis for $\Sigma$. Let $\mathcal{B}$ be the $\sigma$-algebra generated $\mathcal{C}$. We have a standard probability measure $\nu$ defined on $\mathcal{B}$ such that $\nu ([w]_{n}) =1/2^{n}$ for all $w\in \Sigma$. It is $add$-invariant in the meaning that 
$$
\nu (add^{-1}(A))=\nu(A), \quad \forall A\in\mathcal{B}.
$$ 
Moreover, $\nu$ is the unique $add$-invariant probability measure. Therefore, we have that    

\medskip
\begin{proposition}~\label{sca}
The flow $add: \Sigma\to \Sigma$ is minimal, uniquely ergodic, equicontinuous, and thus $\ba$-MLS for any $\ba$.
\end{proposition}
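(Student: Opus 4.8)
The four assertions are essentially independent and each reduces to the arithmetic of the base-$2$ carry operation, so the plan is to treat them in the order equicontinuity, minimality, unique ergodicity, and then read off $\ba$-MLS. The governing observation throughout is that $(\Sigma,add)$ is nothing but translation by $1$ on the $2$-adic integers $\Z_2$: if $w$ corresponds to the formal binary expansion $a$, then $add^n w$ corresponds to $a+n$, and the first $m$ coordinates of $a+n$ depend only on $(a+n)\bmod 2^m$.

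For equicontinuity, the key point is that adding a fixed $n$ does not let low-order bits interfere with high-order ones: if $w,w'\in\Sigma$ agree in their first $m$ coordinates, i.e. $a\equiv a'\pmod{2^m}$, then $a+n\equiv a'+n\pmod{2^m}$, so $add^n w$ and $add^n w'$ still agree in their first $m$ coordinates, whence $d(add^n w,add^n w')\le\sum_{k>m}2^{-k}=2^{-m}$ for every $n\ge 0$. Given $\epsilon>0$, choosing $m$ with $2^{-m}<\epsilon$ and $\delta=2^{-m}$ gives equicontinuity directly. (Equivalently, $add$ is an \emph{isometry} of the compatible ultrametric $\rho(w,w')=2^{-\min\{k\,:\,i_{k-1}\ne i'_{k-1}\}}$, because a carry preserves the first position of disagreement, and then the characterization of surjective equicontinuous maps recalled just after Definition~\ref{eqcn} applies.) For minimality it suffices to show $Orb(w)$ meets every cylinder $[v]_m$; writing $\bar a\in\{0,\dots,2^m-1\}$ for the integer coded by $w_m$ and $c$ for the integer coded by $v_m$, the choice $n=(c-\bar a)\bmod 2^m$ gives $add^n w\in[v]_m$, so $\overline{Orb(w)}=\Sigma$ for every $w$.

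For unique ergodicity, since $\Sigma$ is totally disconnected the linear span of the cylinder indicators $\mathbf{1}_{[v]_m}$ is dense in $C(\Sigma)$, so (using that $M_{1,add}(\Sigma)$ is weak-$*$ closed and that the averaging operators have norm $1$) it is enough to compute Birkhoff averages on such indicators: for $\phi=\mathbf{1}_{[v]_m}$,
$$
\frac1N\sum_{n=0}^{N-1}\phi(add^n w)
=\frac1N\#\{0\le n<N:\ \bar a+n\equiv c\ (\mathrm{mod}\ 2^m)\}
=\frac1N\Bigl(\frac{N}{2^m}+O(1)\Bigr)\longrightarrow\frac1{2^m}=\nu([v]_m),
$$
the convergence being uniform in $w\in\Sigma$. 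Hence every $add$-invariant probability measure agrees with $\nu$ on all cylinders and therefore equals $\nu$, so $add$ is uniquely ergodic. (Alternatively one may invoke the classical fact that a minimal rotation of a compact abelian group is uniquely ergodic with Haar measure, which here is exactly $\nu$.) Finally, since $add$ is equicontinuous it is $\ba$-MLS for every sequence $\ba$ — take $E=\emptyset$ in Definition~\ref{amls} — as noted immediately after Definition~\ref{eqcn}.

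I do not expect a genuine obstacle here: once $\Sigma$ is identified with $\Z_2$ and $add$ with $x\mapsto x+1$, all four statements are standard facts about minimal group rotations, and the only thing requiring care is the elementary bookkeeping of carries used to see that $a\equiv a'\pmod{2^m}$ is preserved under $a\mapsto a+n$ and that the residue $a\bmod 2^m$ equidistributes along $n=0,1,2,\dots$.
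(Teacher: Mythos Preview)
Your argument is correct. The paper does not actually supply a proof of this proposition: it simply asserts before the statement that $\nu$ is the unique $add$-invariant probability measure and then records the proposition as a consequence, treating minimality, equicontinuity, and unique ergodicity of the adding machine as standard facts. Your proof fills in precisely the details the paper omits --- the carry-preserving observation $a\equiv a'\pmod{2^m}\Rightarrow a+n\equiv a'+n\pmod{2^m}$ for equicontinuity, the hitting of any cylinder for minimality, and the uniform convergence of Birkhoff averages on cylinder indicators (equivalently, the group-rotation argument) for unique ergodicity --- so there is nothing to compare beyond noting that you have written out what the paper leaves implicit.
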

 
In Case (II), we consider two different cases:
\begin{itemize}
\item[(a)] there is a point $w_{0}\in \Sigma$ such that $I_{w_{0}}=\{x_{w_{0}}\}$ contains only one point or 
\item[(b)] all $I_{w}$ for $w\in \Sigma$ are closed intervals.
\end{itemize}

For $(a)$, we have that $\omega (x)=\omega (x_{w_{0}})$.   Without loss of generality, we assume $x=x_{w_{0}}$ and every $I_{w}=\{x_{w}\}$ contains only one point. Let 
$$
\tau_{n}=\max_{w_{n}\in \Sigma_{n}} |I_{w_{n}}|.
$$ 
Then we have that $\tau_{n}\to 0$ as $n\to \infty$. This implies that the map $h(x_{w_n}) =w_{n}: K\to \Sigma$ is a homeomorphism such that 
$$
f=h^{-1} \circ add\circ h.
$$ 
From Proposition~\ref{sca}, we have that    

\medskip
\begin{proposition}~\label{IIa}
In Case II $(a)$, $f: K\to K$ is minimal, uniquely ergodic, equicontinuous, and thus $\ba$-MLS for any $\ba$.
\end{proposition}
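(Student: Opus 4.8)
The plan is to deduce Proposition~\ref{IIa} from Proposition~\ref{sca} by transporting properties across the topological conjugacy set up just above. In Case~II$(a)$, after the reductions recorded there — replacing $x$ by $x_{w_0}$ and arranging that every fiber $I_w$ is a single point $\{x_w\}$ — one has $K=\bigsqcup_{w\in\Sigma}I_w=\{x_w:w\in\Sigma\}$, and the map $h:K\to\Sigma$, $h(x_w)=w$, is a homeomorphism satisfying $h\circ f=add\circ h$, i.e.\ $f|_K=h^{-1}\circ add\circ h$. So it suffices to observe that minimality, unique ergodicity, and equicontinuity are invariant under topological conjugacy, and that equicontinuity implies $\ba$-MLS for every $\ba$ (this last point already noted after Definition~\ref{eqcn}); the source system $add:\Sigma\to\Sigma$ has the first three properties by Proposition~\ref{sca}.

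For the transfer: minimality passes through because $\overline{Orb(x_w)}=h^{-1}\bigl(\overline{Orb(w)}\bigr)=h^{-1}(\Sigma)=K$ for every $w$, where the orbit is taken under $f$ on the left and under $add$ on the right. Unique ergodicity passes through because $\mu\mapsto h_*\mu$ gives an affine homeomorphism $M_1(K)\to M_1(\Sigma)$ in the weak topology that, using $h\circ f=add\circ h$, carries $M_{1,f}(K)$ bijectively onto $M_{1,add}(\Sigma)$, which is a singleton. For equicontinuity I would pull back the metric: put $\widetilde d(y,y')=d(h(y),h(y'))$, a metric on $K$ compatible with its topology because $h$ is a homeomorphism; then $h\circ f^{n}=add^{\,n}\circ h$ gives $\widetilde d(f^{n}y,f^{n}y')=d(add^{\,n}h(y),add^{\,n}h(y'))$, so equicontinuity of $\{add^{\,n}\}$ for $d$ is literally equicontinuity of $\{f^{n}\}$ for $\widetilde d$, and equicontinuity of a sequence of self-maps of a compact metric space does not depend on the choice of a compatible metric (equivalently, run the direct $\varepsilon$--$\delta$ argument using uniform continuity of $h$ and $h^{-1}$). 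Finally equicontinuous $\Rightarrow$ $\ba$-MLS for every $\ba$, completing the proof.

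The substantive input is the conjugacy statement itself, which I would justify as follows: well-definedness and bijectivity of $h$ use that in Case~$(a)$ each $I_w$ is the single point $x_w$ and that $K$ is the disjoint union of the $I_w$; continuity of $h$ and of $h^{-1}$ use $\tau_n\to 0$ together with the strictly positive separation between the $2^n$ pairwise disjoint closed intervals $\{I_{w_n}:w_n\in\Sigma_n\}$, so that nearness in $K$ forces agreement of codings to any prescribed depth and conversely; and $h\circ f=add\circ h$ is immediate from $f(I_w)=I_{add(w)}$. The one genuinely non-formal point is $\tau_n\to 0$, i.e.\ that every $I_w$ really is a point: here one uses that $add$ is minimal on $\Sigma$ (Proposition~\ref{sca}), so the forward $add$-orbit of $w_0$ is dense, which together with the single-point hypothesis at $w_0$ (propagated by $f(I_w)=I_{add(w)}$) and the length bound $\sum_{w_n\in\Sigma_n}|I_{w_n}|\le|I|$ forces $\max_{w_n\in\Sigma_n}|I_{w_n}|\to 0$. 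I would take $\tau_n\to 0$ as given in accordance with the excerpt; the genuine difficulty is concentrated there, and granting it, the remainder is the routine transport of dynamical properties across a conjugacy — the only item needing care being the metric bookkeeping for equicontinuity.
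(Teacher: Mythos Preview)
Your proposal is correct and follows exactly the paper's approach: the paper establishes the conjugacy $h:K\to\Sigma$ with $f=h^{-1}\circ add\circ h$ in the paragraph immediately preceding the proposition and then simply writes ``From Proposition~\ref{sca}, we have that'' before stating Proposition~\ref{IIa}, with no further proof. You have supplied the routine details of transporting minimality, unique ergodicity, and equicontinuity across a topological conjugacy that the paper leaves implicit, and you correctly identify $\tau_n\to 0$ as the only substantive input (which the paper also asserts rather than proves).
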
 

For $(b)$, the map $h(I_{w_n}) =w_{n}: K\to \Sigma$ is a continuous map but not a homeomorphism such that 
$$
h\circ f=add\circ h.
$$ 
In~\cite{JNon}, we proved the following theorem.

\medskip
\begin{theorem}~\label{IIb}
In Case (II) $(b)$, $f: K\to K$ is minimal, uniquely ergodic, and MLS but not equicontinuous.  
\end{theorem}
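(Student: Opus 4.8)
The statement has four parts. Minimality of $(K,f)$ is part of the standing setup (it was recorded when $K=\omega(x)$ was introduced), so the plan is to establish unique ergodicity, the failure of equicontinuity, and the MLS property (the case $\ba=(n)$), all through the continuous surjection $h\colon K\to\Sigma$ with $h\circ f=add\circ h$ produced by the symbolic coding. For unique ergodicity, I would push measures forward along $h$: any $\mu\in M_{1,f}(K)$ gives an $add$-invariant $h_*\mu$ on $\Sigma$, which by Proposition~\ref{sca} must equal the standard measure $\nu$, so $h_*\mu=\nu$ for \emph{every} $f$-invariant $\mu$. Distinct $w,w'\in\Sigma$ have distinct length-$n$ prefixes for some $n$, and the intervals in $\eta_n$ are pairwise disjoint, so the $I_w$ are pairwise disjoint subintervals of $I$; hence $\sum_w|I_w|\le|I|<\infty$ and the set $D=\{w: I_w\cap K\text{ is not a single point}\}$ is countable. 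Since $\nu$ is non-atomic ($\nu([w]_n)=2^{-n}$), $\nu(D)=0$, so $\mu(h^{-1}(D))=0$; and on $K_0:=K\setminus h^{-1}(D)$, which is $f$-invariant and $\mu$-conull, $h$ is a continuous bijection onto $\Sigma\setminus D$ conjugating $f$ to $add$ (if $h(z)=w\notin D$ then $z$ lies in the singleton $I_w\cap K$), hence a Borel isomorphism, so $\mu$ is the pull-back of $\nu|_{\Sigma\setminus D}$ — the same for every $\mu$. Thus $M_{1,f}(K)$ consists of a single point (nonempty by compactness), i.e.\ $f$ is uniquely ergodic.

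For the failure of equicontinuity, I would exploit that in Case (II)(b) the map $h$ is not injective, so I may fix $w_0$ and $x_1\ne x_2$ in $I_{w_0}\cap K=h^{-1}(w_0)$. By minimality, $f|_K$ is surjective, so if it were equicontinuous there would be, by the criterion quoted in the excerpt, a compatible metric $\widetilde d$ on $K$ making $f$ a $\widetilde d$-isometry. Choose an infinite word $w^*$ with $I_{w^*}=\{p^*\}$ a single point — possible since $\{w:|I_w|>0\}$ is countable while $\Sigma$ is not. As $add$ is minimal, the $add$-orbit of $w_0$ meets every cylinder $[w^*]_j$, so pick $m_j\uparrow\infty$ with $add^{m_j}w_0\in[w^*]_j$. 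Then $h(f^{m_j}x_i)=add^{m_j}w_0$ for $i=1,2$, which forces $f^{m_j}x_1,f^{m_j}x_2\in I_{(w^*)_j}$, an interval of diameter tending to $0$; hence $f^{m_j}x_1,f^{m_j}x_2\to p^*$ in $d$, and so in $\widetilde d$, giving $\widetilde d(f^{m_j}x_1,f^{m_j}x_2)\to0$. This contradicts $\widetilde d(f^{m_j}x_1,f^{m_j}x_2)=\widetilde d(x_1,x_2)>0$, so $f|_K$ is not equicontinuous.

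For MLS, fix $\epsilon>0$. At level $n$ the $2^n$ intervals $I_{n,k}$ are pairwise disjoint, cover $K$, and satisfy $f^m(I_{n,k})=I_{n,(k+m)\bmod 2^n}$; at most $L:=\lceil|I|/\epsilon\rceil$ of them have diameter $\ge\epsilon$, say those whose index lies in a set $B_n$ of residues mod $2^n$. Choose $n$ with $L/2^n<\epsilon$ and set $\delta:=\min_{k\ne k'}\mathrm{dist}(I_{n,k},I_{n,k'})>0$. If $x,y\in K$ and $d(x,y)<\delta$, then $x,y$ lie in a common $I_{n,k}$, so $f^mx,f^my\in I_{n,(k+m)\bmod 2^n}$ and $d(f^mx,f^my)<\epsilon$ unless $(k+m)\bmod 2^n\in B_n$; the exceptional set of $m$ is a union of at most $L$ residue classes mod $2^n$, of density $\le L/2^n<\epsilon$. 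This is the desired $\delta$ together with the exceptional set $E$ of the pair; combined with Corollary~\ref{MSLaMSL} it also gives $\ba$-MLS for every $\ba$ of positive lower density.

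I expect the delicate point to be precisely this last step: the exceptional set $\{m:(k+m)\bmod 2^n\in B_n\}$ genuinely depends on which $I_{n,k}$ contains the pair, and no single sparse set $E$ can serve all $\delta$-close pairs at once (letting $k$ run over all phases makes the union all of $\N$). So the argument yields MLS in the natural per-pair form — which is how it is used downstream and is consistent with Proposition~\ref{eq} ($\ba$-MLS $\Rightarrow$ $\ba$-MEQ). The rest is bookkeeping with the coding $\Sigma$, the partitions $\eta_n$, and their nesting properties 1)--4).
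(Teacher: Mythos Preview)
The paper does not prove Theorem~\ref{IIb}; it is quoted from~\cite{JNon}. So there is no in-paper argument to compare against, and the question is whether your sketch stands on its own. Your treatments of unique ergodicity (pushforward along $h$, countability of the non-singleton fibres from $\sum_w|I_w|\le|I|$, non-atomicity of $\nu$) and of non-equicontinuity (via the isometry criterion, using that on a compact space compatible metrics are uniformly equivalent) are correct. Note that your choice of a ``thin'' $w^*$ already forces the only coherent reading of case~(II)(b): at most countably many $I_w$ can be nondegenerate, so the paper's ``all $I_w$ are closed intervals'' must be read as ``$h$ is not injective'' rather than ``every $I_w$ has positive length''.

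The gap is exactly where you flag it. Definition~\ref{amls} asks for one exceptional set $E=E_\epsilon$ independent of the pair $(x,y)$, whereas your $E_k$ depends on which $I_{n,k}$ contains the pair; this uniformity is what makes the $N^*$ in the proof of Proposition~\ref{meaneq} independent of the pair and hence gives equicontinuity of $\{S_{N,\ba}\phi\}$. You are right that $\bigcup_k E_k=\N$, but that only shows your \emph{particular} exceptional sets cannot be merged; it does not show that no uniform $E$ exists, so it neither proves nor disproves MLS in the paper's sense. What your argument does give is a per-pair version with uniformly controlled density (each $E_k$ is periodic of period $2^n$, so $\#(E_k\cap[1,N])/N\to|B_n|/2^n$ uniformly in $k$), and that is indeed enough to run Proposition~\ref{meaneq} --- but that is a statement about the downstream application, not a proof of MLS as defined. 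To close the argument you must either produce a uniform $E$ (which requires an idea beyond level-$n$ coding) or verify that the definition intended in~\cite{Fo,FJ,JNon} is the per-pair one.
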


 Combined with Corollary~\ref{MSLaMSL}, we have our second non-trivial example of $\ba$-MLS flow for any subsequence $\ba$ of $\mathbb{N}$ with positive lower density. 
  
\medskip
\begin{corollary}~\label{add}
Suppose $f: I\to I$ is in Case (II) (b) and $\ba$ is a subsequence $\ba$ of $\mathbb{N}$ with $\underline{D}(\ba)>0$. Then $f: K\to K$ is $\ba$-MLS.
\end{corollary}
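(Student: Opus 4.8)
The plan is to deduce the corollary directly by chaining the two structural results already established, so the argument is essentially a one-line composition. First I would invoke Theorem~\ref{IIb}: since $f\colon I\to I$ is in Case (II)(b), the restriction $f\colon K\to K$ to the $\omega$-limit set $K=\omega(x)$ is minimal, uniquely ergodic, and MLS (it is not equicontinuous, but that is irrelevant here). In particular the hypothesis ``$f$ is MLS'' of Corollary~\ref{MSLaMSL} is satisfied for the system $f\colon K\to K$.

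Next I would apply Corollary~\ref{MSLaMSL} to this system with the given sequence $\ba$. Since $\ba$ is a subsequence of $\mathbb{N}$ with $\underline{D}(\ba)>0$, that corollary yields that $f\colon K\to K$ is $\ba$-MLS. Unwinding the statement: given $\epsilon>0$, MLS of $f\colon K\to K$ supplies $\delta>0$ so that $d(x,y)<\delta$ forces $d(f^n x, f^n y)<\epsilon$ for all $n$ outside an exceptional set $E$ with $\overline{D}(E)<\underline{D}(\ba)\,\epsilon$; Proposition~\ref{dad} then gives $\overline{D}_{\ba}(E)\le \overline{D}(E)/\underline{D}(\ba)<\epsilon$, while the displacement estimate on $\mathbb{N}\setminus E$ transfers verbatim to $f^{a_n}$ for $n\in\mathbb{N}\setminus E$. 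This is precisely Definition~\ref{amls}.

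There is no genuine obstacle here: the entire weight of the statement lies in the previously cited inputs, namely Theorem~\ref{IIb} (proved in \cite{JNon}, where the combinatorial construction of the nested families $\eta_n$ and the mean-Lyapunov estimate relative to the adding-machine factor $add\colon\Sigma\to\Sigma$ does the real work) together with Proposition~\ref{dad} and Corollary~\ref{MSLaMSL} (the comparison of $\overline{D}_{\ba}$ with $\overline{D}$ under the assumption $\underline{D}(\ba)>0$). Once those are in hand, the only thing to verify is that the hypotheses line up, i.e.\ that $\ba$ has positive lower density, which is assumed. I would therefore present the proof simply as: by Theorem~\ref{IIb}, $f\colon K\to K$ is MLS; hence by Corollary~\ref{MSLaMSL} it is $\ba$-MLS for any subsequence $\ba$ of $\mathbb{N}$ with $\underline{D}(\ba)>0$.
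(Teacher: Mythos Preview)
Your proposal is correct and follows exactly the paper's approach: the paper simply states that Corollary~\ref{add} follows by combining Theorem~\ref{IIb} with Corollary~\ref{MSLaMSL}, which is precisely what you do. Your additional unwinding of Corollary~\ref{MSLaMSL} via Proposition~\ref{dad} merely reproduces the proof of that corollary and is not needed, but it is not wrong.
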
 

One more non-trivial example is a map on a graph with zero topological entropy as in the paper~\cite{LOYZ}. This can be shown by combining the results in ~\cite{LOYZ} with Corollary~\ref{MSLaMSL}.

\section{\bf Convergence on Every Point}

 In this section, we state and prove the main result in this paper.
 
 \medskip
 \begin{theorem}[Main Theorem]~\label{main1}
 Suppose $\ba$ is a uniformly behaved in $\N$ sequence and suppose $f:X\to X$ is minimal and uniquely ergodic and $\ba$-MLS.  
 Then for any $\phi\in C(X)$ and $x\in X$
 \begin{equation}~\label{conv0}
 \lim_{N\to \infty} S_{N,\ba}\phi (x)=\lim_{N\to \infty} \frac{1}{N} \sum_{n=1}^{N} \phi (f^{a_{n}} x) =\int_{X} \phi d\nu,
 \end{equation}
 where $\nu$ is the unique $f$-invariant probability measure. 
 \end{theorem}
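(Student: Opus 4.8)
The plan is to exploit the equicontinuity of the averaging operators $S_{N,\ba}$ established in Proposition~\ref{meaneq}, reducing the convergence at every point to convergence at a single well-chosen point, and then to obtain convergence at that point by combining the u.b. in $\N$ hypothesis with the spectral theory of uniquely ergodic systems. Since $f$ is minimal, every orbit closure is all of $X$; since $f$ is $\ba$-MLS, Proposition~\ref{meaneq} tells us $\{S_{N,\ba}\phi\}_N$ is equicontinuous in $C(X)$. By Arzel\`a--Ascoli, any subsequence of $\{S_{N,\ba}\phi\}$ has a uniformly convergent sub-subsequence; hence to prove $S_{N,\ba}\phi$ converges uniformly to the constant $\int_X\phi\,d\nu$ it suffices to show that every uniform limit point $\psi$ of the sequence equals that constant. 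Because the limit is constant, pointwise convergence at one point plus equicontinuity will in fact upgrade to uniform convergence everywhere, so the whole problem collapses to: identify one point $x_0\in X$ at which $S_{N,\ba}\phi(x_0)\to\int_X\phi\,d\nu$.

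First I would reduce to $\phi$ running over a spanning set of $C(X)$. For a uniquely ergodic $f$ on a compact metric space with (say) a generating system, it is standard that $C(X)$ is spanned by the constants together with the eigenfunctions and the coboundaries $\phi=g-g\circ f$ (this is the classical decomposition underlying unique ergodicity; alternatively one argues directly with the ergodic averages). For $\phi$ constant the claim is trivial. For $\phi=g-g\circ f$ a coboundary, $S_{N,\ba}\phi(x)=\frac1N\sum_{n=1}^N\big(g(f^{a_n}x)-g(f^{a_n+1}x)\big)$, which one controls by noting $\int_X\phi\,d\nu=0$ and that the sum telescopes in a way bounded using boundedness of $g$ — here the key point is that although $(a_n)$ need not be monotone, one still gets $o(N)$ because for each fixed $x$ the difference $g(f^{a_n}x)-g(f^{a_n+1}x)$ is a fixed bounded function of $a_n$, and one can pass through the unique ergodicity/mean-ergodic estimates to kill it. For $\phi$ a (continuous) eigenfunction with $\phi\circ f=e^{2\pi i\theta}\phi$, we have $\phi(f^{a_n}x)=e^{2\pi i a_n\theta}\phi(x)$, so
\begin{equation}\label{eigen}
S_{N,\ba}\phi(x)=\phi(x)\cdot\frac1N\sum_{n=1}^N e^{2\pi i a_n\theta}.
\end{equation}
If $\theta$ is irrational, the u.b.\ in $\N$ hypothesis \eqref{eq1} makes the right side tend to $0=\int_X\phi\,d\nu$ (the last equality because a non-constant eigenfunction integrates to $0$). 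If $\theta$ is rational and nonzero, \eqref{eq1} again gives $0$; if $\theta=0$ the eigenfunction is constant (by minimality) and we are in the trivial case. So \eqref{conv0} holds for every $\phi$ in the spanning set, hence for all $\phi\in C(X)$ by density and the uniform bound $\|S_{N,\ba}\phi\|\le\|\phi\|$.

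The main obstacle I anticipate is the coboundary term together with the legitimacy of the eigenfunction/coboundary decomposition for a merely \emph{uniquely ergodic, minimal} $f$ that is only $\ba$-MLS rather than equicontinuous: the systems in view (Denjoy counter-examples, Case (II)(b) interval maps) are \emph{not} equicontinuous, so $f$ need not have pure point spectrum, and $C(X)$ need not be spanned by eigenfunctions and coboundaries in the topological sense. The honest route is therefore to avoid the spectral decomposition and argue more robustly: use equicontinuity of $\{S_{N,\ba}\phi\}$ to pass to a uniform limit $\psi=\lim_k S_{N_k,\ba}\phi$ along a subsequence, observe that $\psi$ is $f$-invariant up to a controlled error (since shifting the index set $\{a_1,\dots,a_N\}$ by $1$ changes $S_{N,\ba}\phi$ by $O(\|\phi\|\cdot \#(\text{symmetric difference})/N)$, which one must show is $o(1)$ — this is where u.b.\ in $\N$, via its implication of u.d.\ in $\Z$, does real work), conclude $\psi$ is constant by minimality plus unique ergodicity, and finally pin the constant to $\int_X\phi\,d\nu$ by integrating $S_{N,\ba}\phi$ against $\nu$ and using $f$-invariance of $\nu$ to get $\int_X S_{N,\ba}\phi\,d\nu=\int_X\phi\,d\nu$ for every $N$. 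Passing this last identity to the limit under uniform convergence yields $\int_X\psi\,d\nu=\int_X\phi\,d\nu$, and since $\psi$ is constant, $\psi\equiv\int_X\phi\,d\nu$. As every subsequential uniform limit is the same constant, $S_{N,\ba}\phi\to\int_X\phi\,d\nu$ uniformly, in particular at every $x\in X$. The delicate quantitative estimate — bounding $\#(\{a_1,\dots,a_N\}\triangle\{a_1+1,\dots,a_N+1\})/N$, or more precisely controlling $\frac1N\sum_{n\le N}\big(\phi(f^{a_n}x)-\phi(f^{a_n+1}x)\big)$ uniformly in $x$ using \eqref{eq1}/\eqref{eq3} — is the step I expect to require the most care.
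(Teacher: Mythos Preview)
Your ``honest route'' has the right architecture and is essentially the paper's strategy: use Proposition~\ref{meaneq} and Arzel\`a--Ascoli to extract uniform subsequential limits, show that any such limit $\psi$ satisfies $\psi\circ f=\psi$, conclude $\psi$ is constant by minimality/unique ergodicity, and identify the constant as $\int\phi\,d\nu$. (The paper phrases the endgame dually, showing that weak limits of $\nu_{N,x}=\frac1N\sum_{n\le N}\delta_{f^{a_n}x}$ are $f$-invariant, hence equal $\nu$; your version, integrating $S_{N,\ba}\phi$ against $\nu$ and using $\int S_{N,\ba}\phi\,d\nu=\int\phi\,d\nu$, is an equally valid variant.)

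The genuine gap is exactly where you flag it: establishing $(S_{N,\ba}\phi)\circ f-S_{N,\ba}\phi\to 0$. Your proposed tools do not suffice. The symmetric-difference heuristic fails outright for non-injective $\ba$ such as $a_n=\Omega(n)$, and even for increasing $\ba$ there is no reason for $\{a_n+1:n\le N\}$ to overlap $\{a_n:n\le N\}$ (for $a_n=2n$ the two sets are disjoint --- this $\ba$ is not u.b.\ in $\N$, of course, but the example shows that u.d.\ in $\Z$ alone gives no grip on such overlap). Likewise, u.d.\ in $\Z$ only controls observables factoring through $\Z/m\Z$, not arbitrary $\phi\in C(X)$.

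The paper fills this gap with the \emph{spectral measure} of the Koopman operator on $L^2(\nu)$: there is a positive measure $\nu_\phi$ on $\T$ with $\langle \phi\circ f^n,\phi\rangle_{L^2(\nu)}=\int_{\T} e^{2\pi i n\theta}\,d\nu_\phi(\theta)$, and a direct computation yields
\[
\big\|(S_{N,\ba}\phi)\circ f-S_{N,\ba}\phi\big\|_{L^2(\nu)}^2
=\int_{\T}\Big|(e^{2\pi i\theta}-1)\,\frac1N\sum_{n=1}^{N}e^{2\pi i a_n\theta}\Big|^2\,d\nu_\phi(\theta).
\]
The integrand tends to $0$ for every $\theta$ (the u.b.\ in $\N$ hypothesis handles $0<\theta<1$, and the factor $e^{2\pi i\theta}-1$ kills $\theta=0$), so dominated convergence gives $L^2$-convergence to $0$. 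Equicontinuity of $\{(S_{N,\ba}\phi)\circ f-S_{N,\ba}\phi\}_N$ together with the full support of $\nu$ (from minimality) then upgrades this to uniform convergence. In other words, your discarded eigenfunction computation \eqref{eigen} was pointing in the right direction: the spectral measure is precisely the device that makes that calculation rigorous for \emph{all} $\phi\in L^2(\nu)$ without any pure-point-spectrum assumption, and it is the missing ingredient in your outline.
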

  
\begin{proof}
Consider the sequence of functions $\{ \phi\circ f^{n}\}_{n=0}^{\infty}$ in $C(X)$. Let ${\mathbb T}=\{ z\in {\mathbb C} \;|\; |z|=1\}$ be the unit circle in the complex plane ${\mathbb C}$. 
From the spectral lemma (see~\cite[pp.94-95]{K}) in ergodic theory, we have a probability measure $\nu_{\phi}$ on ${\mathbb T}$ such that 
$$
<\phi\circ f^{n}, \phi> =\int_{X} \phi\circ f^{n} \phi d\nu = \int_{\mathbb T} e^{2\pi i n\theta} d\nu_{\phi}, \quad n\geq 1.
$$
This gives that 
$$
<(S_{N,\ba}\phi)\circ f-S_{N,\ba}\phi, \phi> =\int_{\mathbb T} (e^{2\pi i\theta} -1)\Big( \frac{1}{N}\sum_{n=1}^{N} e^{2\pi i a_{n}\theta}\Big) d\nu_{\phi}.
$$  

The $f$-invariance of $\nu$ is equivalent to
$$
\int_{X} \phi\circ f \;d\nu =\int_{X} \phi  \; d\nu, \quad \forall\; \phi \in C(X).
$$
Consider the square of the $L^{2}$-norm of $(S_{N,\ba}\phi)\circ f-S_{N,\ba}\phi$ on $(X, \nu)$: we have 
$$
\|(S_{N,\ba}\phi)\circ f-S_{N,\ba}\phi\|_{L^{2}(\nu)}^{2} = \int_{X} \Big( (S_{N,\ba}\phi)\circ f-S_{N,\ba}\phi\Big) \Big( \overline{(S_{N,\ba}\phi)\circ f-S_{N,\ba}\phi }\Big) \; d\nu
$$
$$
= \int_{X} \Big( (S_{N,\ba}\phi)\circ f-S_{N,\ba}\phi\Big) \Big( (S_{N,\ba}\phi)\circ f-S_{N,\ba}\phi\Big) \; d\nu
$$
since $(S_{N,\ba}\phi)\circ f-S_{N,\ba}\phi$ is a real-valued function.  For each term $\phi\circ f^{b_{m}}$ in $(S_{N,\ba}\phi)\circ f-S_{N,\ba}\phi$, where $b_{m}=a_{m}+1$ or $a_{m}$,
we have 
$$
 \int_{X} \Big( (S_{N,\ba}\phi)\circ f-S_{N,\ba}\phi\Big) \phi\circ f^{b_{m}} \; d\nu =  \int_{X} \Big( (S_{N,\ba}\phi)\circ f\circ f^{-b_{m}} -S_{N,\ba}\phi\circ f^{-b_{m}} \Big)\phi\;d\nu 
$$
$$
=\int_{\T} \frac{1}{N}  \sum_{n=1}^{N} \Big( e^{2\pi i (a_{n}+1-b_{m}) \theta}-e^{2\pi i (a_{n}-b_{m})\theta}\Big) d\nu_{\phi}
$$
$$
=\int_{\T} (e^{2\pi i \theta} -1)  \Big( \frac{1}{N}  \sum_{n=1}^{N} e^{2\pi i a_{n} \theta}\Big) \overline{e^{2\pi i b_{m}\theta}} d\nu_{\phi}.
$$
Summing up every term, we get 
$$
\int_{X} \Big( (S_{N,\ba}\phi)\circ f-S_{N,\ba}\phi\Big) \Big( (S_{N,\ba}\phi)\circ f-S_{N,\ba}\phi\Big) \; d\nu
$$
$$
=\int_{\T} (e^{2\pi i \theta} -1)  \Big( \frac{1}{N}  \sum_{n=1}^{N} e^{2\pi i a_{n} \theta}\Big) \Big(\overline{(e^{2\pi i \theta} -1)  \frac{1}{N}  \sum_{n=1}^{N} e^{2\pi i a_{n} \theta}}\Big) d\nu_{\phi}
$$
$$
=\|(e^{2\pi i\theta} -1)\Big( \frac{1}{N}\sum_{n=1}^{N} e^{2\pi i a_{n}\theta}\Big)\|_{L^{2}(\nu_{\phi})}^{2},
$$
which is the square of the $L^{2}$-norm of $(e^{2\pi i\theta} -1)\Big( \frac{1}{N}\sum_{n=1}^{N} e^{2\pi i a_{n}\theta}\Big)$ on $(\T, \nu_{\phi})$.
Therefore, we have that
$$
\|(S_{N,\ba}\phi)\circ f-S_{N,\ba}\phi\|_{L^{2}(\nu)} =\|(e^{2\pi i\theta} -1)\Big( \frac{1}{N}\sum_{n=1}^{N} e^{2\pi i a_{n}\theta}\Big)\|_{L^{2}(\nu_{\phi})}.
$$

Since 
$$
\sup_{x\in X} |S_{N,\ba}\phi(fx)-S_{N,\ba}\phi(x)| \leq 2\sup_{x\in X} |\phi (x)| \quad \forall \; N\geq 1
$$
and 
$$
\sup_{0\leq \theta<1} |(e^{2\pi i\theta} -1)\Big( \frac{1}{N}\sum_{n=1}^{N} e^{2\pi i a_{n}\theta}\Big)| \leq 2, \quad \forall\; N\geq 1, 
$$
Fatou's lemma implies that 
$$
\|\lim_{N\to \infty} ( (S_{N,\ba}\phi)\circ f-S_{N,\ba}\phi) \|_{L^{2}(\nu)} =\lim_{N\to \infty} \|(S_{N,\ba}\phi)\circ f-S_{N,\ba}\phi \|_{L^{2}(\nu)}
$$
$$
= \lim_{N\to \infty}  \|(e^{2\pi i\theta} -1)\Big( \frac{1}{N}\sum_{n=1}^{N} e^{2\pi i a_{n}\theta}\Big)\|_{L^{2}(\nu_{\phi})}
$$
$$
=\| \lim_{N\to \infty} (e^{2\pi i\theta} -1)\Big( \frac{1}{N}\sum_{n=1}^{N} e^{2\pi i a_{n}\theta}\Big)\|_{L^{2}(\nu_{\phi})}.
$$ 
Since $\ba$ is u.b. in $\N$, we have that for any $0\leq \theta<1$, 
$$
\lim_{N\to \infty} (e^{2\pi i\theta} -1)\Big( \frac{1}{N}\sum_{n=1}^{N} e^{2\pi i a_{n}\theta}\Big)=0.
$$
Thus, 
$$
\lim_{N\to \infty} ( (S_{N,\ba}\phi)\circ f-S_{N,\ba}\phi) =0, \quad \nu-a.e..
$$

From Proposition~\ref{meaneq}, the sequence $\{ (S_{N,\ba}\phi)\circ f-S_{N,\ba}\phi\}_{N\in {\N}}$ is equicontinuous and uniformly bounded, thus the Ascoli-Arzela theorem implies that it is a compact subset in $C(X)$. Therefore, every subsequence of 
$$
\{(S_{N,\ba}\phi)\circ f-S_{N,\ba}\phi\}_{N\in{\N}}
$$ 
has a convergent subsequence. Suppose that $\{ (S_{N_{k},\ba}\phi)\circ f-S_{N_{k},\ba}\phi\}_{k\in {\N}}$ is a convergent subsequence with limit $\psi\in C(X)$. Since $f: X\to X$ is minimal, the support of $\nu$ is all of $X$. This implies that $\psi \equiv 0$ on $X$. Therefore, 
$$
\lim_{N\to \infty} ((S_{N,\ba}\phi)\circ f-S_{N,\ba}\phi) =0 \hbox{ uniformly on $X$}.
$$

For any $x\in X$, let $\delta_{x}$ be the $\delta$-measure on $X$, that is $\delta_{x} (A) =1$ if $x\in A$ and $\delta_{x} (A)=0$ if $x\not\in A$ for any subset $A$ of $X$. The push-forward measures of $\delta_{x}$ are $f^{n}_{*} \delta_{x} =\delta_{f^{n}x}$. Consider 
$$
\nu_{N,x} =\frac{1}{N} \sum_{n=1}^{N} \delta_{f^{a_{n}}x}.
$$
This is a probability Borel measure on $X$ and it's push-forward measures are 
$$
f_{*} \nu_{N,x} = \frac{1}{N} \sum_{n=1}^{N} \delta_{f^{a_{n}+1}x}. 
$$
For any continuous function $\phi$, consider 
$$
<\phi, f_{*}\nu_{N,x}-\nu_{N,x}> = \int_{X} \phi d(f_{*}\nu_{N,x}-\nu_{N,x}) =
$$
$$
 \int_{X}  ( (S_{N,\ba}\phi)\circ f-S_{N,\ba}\phi) d\delta_{x} = S_{N,\ba}\phi (fx)-S_{N,\ba}\phi (x).
$$
Since the space of all probability measures on $X$ is a weakly compact metric space, 
let $\{\nu_{N_{k},x}\}_{k\in {\N}}$ be a convergent subsequence with a weak limit $\nu_{x}$, then 
$$
<\phi,  f_{*}\nu_{x}-\nu_{x}> =\lim_{k\to \infty} <\phi, f_{*}\nu_{N_{k},x}-\nu_{N_{k},x}> = 
$$
$$
\lim_{k\to \infty} <(S_{N_{k},\ba}\phi)\circ f-S_{N_{k},\ba}\phi, \delta_{x}>=
\lim_{k\to \infty} (S_{N_{k},\ba}\phi (fx)-S_{N_{k},\ba}\phi (x))=0.
$$

This implies that $f_{*}\nu_{x}=\nu_{x}$, that is, $\nu_{x}$ is an $f$-invariant probability measure on $X$. Since $f: X\to X$ is uniquely ergodic, we have that $\nu_{x}=\nu$. This implies that 
$$
\lim_{N\to \infty} \nu_{N,x} =\lim_{N\to \infty} \frac{1}{N} \sum_{n=1}^{N} \delta_{f^{a_{n}} x}=\nu
$$
for all $x\in X$.

Now for any continuous function $\phi$ and for any $x\in X$, 
$$
\lim_{N\to \infty} S_{N,\ba}\phi (x)= \lim_{N\to \infty} \frac{1}{N} \sum_{n=1}^{N} \phi (f^{a_{n}} x) = \lim_{N\to \infty} <\frac{1}{N} \sum_{n=1}^{N} \phi \circ f^{a_{n}}, \delta_{x}> 
$$
$$
= \lim_{N\to \infty} <\phi, \nu_{N,x}> =<\phi, \nu>=\int_{X}\phi d\nu.
$$
We completed the proof of the theorem.   
  \end{proof}

For a general flow $f: X\to X$, suppose $K$ is a closed subset of $X$ such that $f(K)\subseteq K$. We say $K$ is minimal if $f: K\to K$ is minimal. 

\medskip
\begin{definition}[$\ba$-MUEMLS]~\label{mmls}
For a given sequence $\ba$ in $\N$, we call $f$ minimally uniquely ergodic $\ba$-MLS ($\ba$-MUEMLS for short) if for every minimal subset $K\subseteq X$, $f:K\to K$ is uniquely ergodic and $\ba$-MLS.
\end{definition}

 \medskip
 \begin{definition}[$\ba$-MMA]~\label{amma}
 For a given sequence $\ba$ in $\N$ and a subset $K\subseteq X$, we say $x\in X$ is $\ba$-mean attracted to $K$ if for any $\epsilon >0$, there is a point $z=z_{\epsilon, x}\in K$ such that
\begin{equation}\label{MA}
    \limsup_{N\to\infty} \frac{1}{N} \sum_{n=1}^{N} d(f^{a_{n}} x, f^{a_{n}} z) <\epsilon.
\end{equation}
The {\em basin of attraction} of $K$, denoted as $\hbox{\rm Basin}(K)$, is defined to be the set of all
points $x$ which are $\ba$-mean attracted to $K$. 
We call $f$ $\ba$-minimally mean attractable ($\ba$-MMA for short) if
\begin{equation}\label{Decomp}
   X= \bigcup_{K} \mbox{\rm Basin}(K)
\end{equation}
where $K$ varies among all minimal subsets.
\end{definition}

\medskip
\begin{remark}
It is clear that
$K \subseteq \hbox{\rm Basin}(K)$. The union in the above definition could be uncountable.
\end{remark}

Because of Corollary~\ref{den} and Corollary~\ref{add}, we have the following two non-trivial examples.

\medskip
\begin{example}
Every circle homeomorphism is $\ba$-MMA and $\ba$-MUEMLS for any subsequence $\ba$ of $\N$ with $\underline{D}(\ba)>0$.  
\end{example}

\medskip
\begin{example}
Every zero topological entropy interval map is $\ba$-MMA and $\ba$-MUEMLS for for any subsequence $\ba$ of $\N$ with $\underline{D}(\ba)>0$.  
\end{example}


As a consequence of Theorem~\ref{main1}, we have that 

 \medskip
 \begin{corollary}~\label{fcor1}
 Suppose $\ba$ is u.b. in $\N$ and suppose $f$ is $\ba$-MMA and $\ba$-MUEMLS. Then for any $\phi\in C(X)$, 
 \begin{equation}~\label{conv1}
 \lim_{N\to \infty} S_{N,\ba}\phi (x) =\lim_{N\to \infty} \frac{1}{N} \sum_{n=1}^{N} \phi (f^{a_{n}} x) =\int_{K} \phi d\nu_{K}
 \end{equation}
 for any $x\in Basin(K)$ where $\nu_{K}$ is the unique $(f|K)$-invariant probability measure.
 \end{corollary}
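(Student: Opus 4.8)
The plan is to reduce the statement to the Main Theorem (Theorem~\ref{main1}) applied on the minimal set $K$, and to absorb the error coming from the fact that $x$ only lies in $\mathrm{Basin}(K)$, not in $K$ itself. Fix $\phi\in C(X)$, fix a minimal set $K$, and fix $x\in\mathrm{Basin}(K)$. Since $f$ is $\ba$-MUEMLS, the restricted flow $f\colon K\to K$ is minimal, uniquely ergodic, and $\ba$-MLS, so Theorem~\ref{main1} applies to it: for every $z\in K$ we have $S_{N,\ba}\phi(z)\to\int_K\phi\,d\nu_K$ as $N\to\infty$, where $\nu_K$ is the unique $(f|K)$-invariant probability measure. (Note $\phi|_K\in C(K)$, and $f^{a_n}z\in K$ for $z\in K$, so $S_{N,\ba}\phi(z)$ computed in $X$ agrees with the one computed in $K$.) Thus the target value $\int_K\phi\,d\nu_K$ is already delivered by the Main Theorem; what remains is to transfer the convergence from points of $K$ to the point $x$.

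Next I would use uniform continuity of $\phi$ to control the difference $|S_{N,\ba}\phi(x)-S_{N,\ba}\phi(z)|$ for a well-chosen $z\in K$. Given $\epsilon>0$, pick $\eta>0$ so that $d(u,v)<\eta$ implies $|\phi(u)-\phi(v)|<\epsilon$; by the definition of $\ba$-mean attraction there is a point $z=z_{\eta,x}\in K$ with $\limsup_{N\to\infty}\frac1N\sum_{n=1}^N d(f^{a_n}x,f^{a_n}z)<\eta^2$, say. Splitting the index set $\{1,\dots,N\}$ into the ``good'' indices where $d(f^{a_n}x,f^{a_n}z)<\eta$ and the ``bad'' ones where it is $\ge\eta$, the good indices contribute at most $\epsilon$ to $\frac1N\sum_{n=1}^N|\phi(f^{a_n}x)-\phi(f^{a_n}z)|$, while the bad indices have upper density at most $\eta$ by Markov's inequality (applied to the averaged distances), hence contribute at most $2\|\phi\|\,\eta$ in the limsup. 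Choosing $\eta$ small enough that $\epsilon+2\|\phi\|\,\eta<2\epsilon$, we obtain
\[
\limsup_{N\to\infty}\bigl|S_{N,\ba}\phi(x)-S_{N,\ba}\phi(z)\bigr|<2\epsilon .
\]
Combining this with $S_{N,\ba}\phi(z)\to\int_K\phi\,d\nu_K$ gives $\limsup_{N\to\infty}|S_{N,\ba}\phi(x)-\int_K\phi\,d\nu_K|<2\epsilon$, and since $\epsilon>0$ is arbitrary the limit of $S_{N,\ba}\phi(x)$ exists and equals $\int_K\phi\,d\nu_K$, which is \eqref{conv1}.

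The main obstacle is the bad-index estimate: one must be careful that the $\ba$-mean attraction hypothesis gives control of the \emph{averaged} distance $\frac1N\sum_{n=1}^N d(f^{a_n}x,f^{a_n}z)$, not of the count of bad indices directly, so the passage to a density bound on the bad set goes through a Markov-type inequality and therefore needs the attraction bound to be taken as $\eta^2$ (or any quantity that is $o(\eta)$-controllable), not merely $\eta$. Everything else—restricting to $K$, invoking Theorem~\ref{main1}, and the uniform-continuity split—is routine. Note also that no separate appeal to $\ba$-MEQ or Proposition~\ref{meaneq} is needed here, since the convergence on $K$ is already pointwise from the Main Theorem and we only compare a single point $x$ to a single point $z\in K$.
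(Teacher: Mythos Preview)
Your proposal is correct and follows precisely the strategy the paper indicates by deferring to~\cite{FJ}: apply Theorem~\ref{main1} on the minimal set $K$ to obtain $S_{N,\ba}\phi(z)\to\int_K\phi\,d\nu_K$ for every $z\in K$, and then use uniform continuity of $\phi$ together with the $\ba$-mean-attraction hypothesis (splitting into good and bad indices via a Markov-type inequality) to transfer the convergence from $z\in K$ to the given $x\in\mathrm{Basin}(K)$. The only cosmetic point is that the two constraints on $\eta$ (the uniform-continuity modulus and $2\|\phi\|\eta<\epsilon$) should be imposed simultaneously at the outset rather than retroactively, but this is presentation, not substance.
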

  
Following Theorem~\ref{main1}, the rest of the proof of this corollary is similar to that in~\cite{FJ}.

\section{\bf Applications to Number Theory} 

 Let $\mathcal{SF}$ be the set of square-free natural numbers that is,
$$
\mathcal{SF}= \{n \;\; |\;\; p^2\not| n \text{ for all primes }p\}.
$$ 
The indicator (or characteristic function) of $\mathcal{SF}$ is $\mu ^{2} (n),$ where $\mu (n)$ is the M\"obius function defined on ${\mathbb N}$ as $\mu(1)=1$ and 
$\mu (n)=(-1)^{r}$ if $n=p_{1}\cdots p_{r}$ as a product of $r$ distinct prime numbers $p_{i}$, $1\leq i\leq r$, and $\mu (n)=0$ if $n \in \N\setminus \mathcal{SF}$. 
  It is well know that the density of $\mathcal{SF}$ is,
 $$
 D(\mathcal{SF}) =\lim_{N\to \infty} \frac{1}{N} \sum_{n=1}^{N} \mu^{2} (n)
 =\lim_{N\to \infty} \frac{1}{N} \sum_{n=1}^{N}\Big(\sum_{m^{2} | n} \mu (m)\Big)
$$
$$
=\lim_{N\to \infty} \frac{1}{N} \sum_{n=1}^{N}\sum_{1\leq m\leq \sqrt{n}} 1_{m^{2} | n} (m) \mu (m) 
=\lim_{N\to \infty}\frac{1}{N} \sum_{m=1}^{[\sqrt{N}]}\mu (m) \sum_{n=1}^{[\frac{N}{m^{2}}]} 1
$$
$$
=\lim_{N\to \infty} \sum_{m=1}^{[\sqrt{N}]}\frac{\mu(m)}{m^2} =\frac{1}{\zeta (2)} =\frac{1}{\frac{\pi^{2}}{6}} =\frac{6}{\pi^{2}},
$$
where $\zeta(s)$ is the Riemann zeta function.  
 
 \medskip
 \begin{definition}~\label{sfuds1}
 We say a sequence $\ba$ in $\N$ is square-free uniformly behaved in $\N$ (s.f.u.b. in $\N$) if for any $0<\theta <1$,
 \begin{equation}~\label{eqsf1}
 \lim_{N\to \infty} \frac{1}{N} \sum_{n=1}^{N} \mu^{2}(n) e^{2\pi i a_{n} \theta} =\lim_{N\to \infty} \frac{1}{N} \sum_{1\leq n\in \mathcal{SF}\leq N} e^{2\pi i a_{n} \theta} =0.
 \end{equation}
 \end{definition}
 
  \medskip
 \begin{definition}~\label{ca}
 We say a sequence $\ba$ in $\N$ is completely additive if $a_{mn} =a_{n} +a_{m}$ for all natural numbers $n$ and $m$.  
 \end{definition}
 
\medskip
\begin{theorem}~\label{sfub}
Suppose $\ba$ is u.b. in $\N$ and completely additive. Then it is also s.f.u.b. in $\N$.
\end{theorem}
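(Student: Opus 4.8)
The plan is to reduce the square-free weighted exponential sum to the unweighted one by exploiting the convolution identity $\mu^2(n)=\sum_{m^2\mid n}\mu(m)$, exactly as in the computation of $D(\mathcal{SF})$ displayed above, but now carrying the oscillating factor $e^{2\pi i a_n\theta}$ along. Writing $n=m^2 k$, complete additivity gives $a_n = a_{m^2k}=2a_m + a_k$, so $e^{2\pi i a_n\theta}=e^{2\pi i (2a_m)\theta}\,e^{2\pi i a_k\theta}$. Therefore
\begin{equation}\label{sfub_reduction}
\frac{1}{N}\sum_{n=1}^{N}\mu^2(n)e^{2\pi i a_n\theta}
=\frac{1}{N}\sum_{m=1}^{[\sqrt N]}\mu(m)e^{2\pi i(2a_m)\theta}\sum_{k=1}^{[N/m^2]}e^{2\pi i a_k\theta}.
\end{equation}
First I would justify \eqref{sfub_reduction} carefully (interchanging the two finite sums, using $1_{m^2\mid n}$ to re-index $n=m^2k$, and applying complete additivity to split the exponential).

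Next I would estimate the right-hand side of \eqref{sfub_reduction} by splitting the $m$-sum at a threshold $M=M(N)$ (say $M=[\log N]$, or more conservatively any $M\to\infty$ with $M^2=o(N)$). For the tail $m>M$ one uses the trivial bound $|e^{2\pi i a_k\theta}|\le 1$, getting a contribution at most $\frac{1}{N}\sum_{M<m\le[\sqrt N]}[N/m^2]\le \sum_{m>M}m^{-2}\to 0$ as $M\to\infty$, uniformly in $\theta$. For the head $m\le M$, fix $m$: since $\ba$ is u.b. in $\N$, for each rational or irrational $\theta$ with $0<\theta<1$ we have $\frac{1}{L}\sum_{k=1}^{L}e^{2\pi i a_k\theta}\to 0$ as $L\to\infty$; with $L=[N/m^2]\to\infty$ this forces $\frac{1}{N}\sum_{k=1}^{[N/m^2]}e^{2\pi i a_k\theta}=\frac{1}{m^2}\cdot\frac{m^2}{N}\sum_{k=1}^{[N/m^2]}e^{2\pi i a_k\theta}\to 0$. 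Since there are only finitely many ($\le M$, a fixed finite number once we do the head/tail split in the right order) such terms and $|\mu(m)e^{2\pi i(2a_m)\theta}|\le 1$, the head also tends to $0$. Combining the two, \eqref{eqsf1} follows, which is the assertion that $\ba$ is s.f.u.b. in $\N$.

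The one subtlety — and the main point to get right — is the order of quantifiers in the $\varepsilon$-argument. One should first fix $\varepsilon>0$, then choose $M$ so large that the tail bound $\sum_{m>M}m^{-2}<\varepsilon$; then, with $M$ now \emph{fixed}, let $N\to\infty$ so that each of the finitely many head terms indexed by $m\le M$ is $<\varepsilon/M$ in absolute value (possible because $[N/m^2]\to\infty$ for each fixed $m$, invoking u.b. in $\N$). This yields $\limsup_{N\to\infty}\bigl|\frac{1}{N}\sum_{n=1}^{N}\mu^2(n)e^{2\pi i a_n\theta}\bigr|\le 2\varepsilon$, and letting $\varepsilon\to 0$ completes the proof. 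I do not expect any genuine obstacle here; the argument is essentially the Pillai--Selberg square-free sieve made to carry an oscillating weight, and complete additivity is exactly the hypothesis that makes the exponential factorize through $n=m^2k$.
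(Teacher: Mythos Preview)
Your proposal is correct and follows essentially the same route as the paper: expand $\mu^2$ via $\sum_{m^2\mid n}\mu(m)$, use complete additivity to factor the exponential (the paper writes $a_{m^2}$ where you write $2a_m$, which is the same thing), and then split the $m$-sum at a fixed threshold $m_0$ chosen from $\varepsilon$ before sending $N\to\infty$. Your ``subtlety'' paragraph about fixing $M$ first and only then letting $N\to\infty$ is exactly the order of quantifiers the paper uses, so your initial suggestion of $M=[\log N]$ should simply be dropped in favor of that clean $\varepsilon$--$m_0$ version.
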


\begin{proof}
Let $0<\theta <1$. 
Using the identity $\mu^{2} (n) =\sum_{m^{2} | n} \mu (m)$, we have that 
$$
\frac{1}{N} \sum_{1\leq n\in \mathcal{SF}\leq N} e^{2\pi i a_{n} \theta}= \frac{1}{N} \sum_{n=1}^{N} \mu^{2}(n) e^{2\pi i a_{n} \theta}
$$
$$
=\frac{1}{N} \sum_{n=1}^{N}\Big(\sum_{m^{2} | n} \mu (m)\Big) e^{2\pi i a_{n} \theta} =\frac{1}{N} \sum_{n=1}^{N}\sum_{1\leq m\leq \sqrt{n}} 1_{m^{2} | n} (m) \mu (m) e^{2\pi i a_{n} \theta} 
$$
$$
=\frac{1}{N} \sum_{m=1}^{[\sqrt{N}]}\mu (m) \sum_{n=1}^{[\frac{N}{m^{2}}]} e^{2\pi i a_{m^{2}n}\theta} +o(1)
=\frac{1}{N} \sum_{m=1}^{[\sqrt{N}]}\mu (m) \sum_{n=1}^{[\frac{N}{m^{2}}]} e^{2\pi i (a_{n} +a_{m^{2}})\theta} +o(1)
$$
$$
=\sum_{m=1}^{[\sqrt{N}]}\frac{\mu (m)}{m^{2}} \Big( \frac{1}{[\frac{N}{m^{2}}]}   \sum_{n=1}^{[\frac{N}{m^{2}}]} e^{2\pi i a_{n}\theta}\Big) e^{2\pi i a_{m^{2}}\theta}   +o(1)
$$
For any $\epsilon>0$, since $A=\sum_{m=1}^{\infty} 1/m^{2}$ converges, there is some $m_0$ such that for all $N>m_0^2$, 
$$
\Big| \sum_{m=m_0+1}^{[\sqrt{N}]}\frac{\mu (m)}{m^{2}} \Big( \frac{1}{[\frac{N}{m^{2}}]}   \sum_{n=1}^{[\frac{N}{m^{2}}]} e^{2\pi i a_{n}\theta}\Big) e^{2\pi i a_{m^{2}}\theta}   +o(1)\Big|
$$
$$
\leq \sum_{m=m_0+1}^{[\sqrt{N}]}\frac{|\mu (m)|}{m^{2}} \left| \frac{1}{[\frac{N}{m^{2}}]} \sum_{n=1}^{[\frac{N}{m^{2}}]} e^{2\pi i a_{n}\theta}\right| |e^{2\pi i a_{m^{2}}\theta}|   +o(1)
$$
$$
\leq \sum_{m=m_0+1}^{[\sqrt{N}]}\frac{1}{m^2}+o(1)<\frac{\epsilon}{2}.
$$

Since 
$$
\lim_{N\to \infty} \frac{1}{N}   \sum_{n=1}^{N} e^{2\pi i a_{n}\theta} =0,
$$
we have an integer $N_0>m_{0}^{2}$ such that for all $N>N_0$ and for all $1\leq m\leq m_{0}$, 
$$
\left| \frac{1}{[\frac{N}{m^{2}}]}   \sum_{n=1}^{[\frac{N}{m^{2}}]} e^{2\pi i a_{n}\theta}\right| <\frac{\epsilon}{2A}.
$$
Thus for all $N>N_0$, 
$$
\Big| \sum_{m=1}^{m_{0}}\frac{\mu (m)}{m^{2}} \Big( \frac{1}{[\frac{N}{m^{2}}]}   \sum_{n=1}^{[\frac{N}{m^{2}}]} e^{2\pi i a_{n}\theta}\Big) e^{2\pi i a_{m^{2}}\theta} \Big| 
\leq \sum_{m=1}^{m_{0}}\frac{|\mu (m)|}{m^{2}} \Big| \frac{1}{[\frac{N}{m^{2}}]}   \sum_{n=1}^{[\frac{N}{m^{2}}]} e^{2\pi i a_{n}\theta}\Big| |e^{2\pi i a_{m^{2}}\theta}|<
$$
$$
< \frac{\epsilon}{2A} \sum_{m=1}^{m_{0}} \frac{1}{m^{2}} <\frac{\epsilon}{2}.
$$
Thus, we have that for all $N>N_{0}$, 
$$
\Big| \frac{1}{N} \sum_{1\leq n\in \mathcal{SF}\leq N} e^{2\pi i a_{n} \theta}\Big|  
=\Big|\sum_{m=1}^{[\sqrt{N}]}\frac{\mu (m)}{m^{2}} \Big( \frac{1}{[\frac{N}{m^{2}}]}   \sum_{n=1}^{[\frac{N}{m^{2}}]} e^{2\pi i a_{n}\theta}\Big) e^{2\pi i a_{m^{2}}\theta}   +o(1)\Big| <\epsilon.
$$
Therefore,
$$
\lim_{N\to \infty}  \frac{1}{N} \sum_{1\leq n\in \mathcal{SF}\leq N} e^{2\pi i a_{n} \theta}=0.
$$ 
We have proved the theorem.
 
\end{proof}

Using the proof of Theorem~\ref{sfub} we obtain another corollary of Theorem~\ref{main1}.

  \medskip
 \begin{corollary}~\label{fcor2}
 Suppose $\ba$ is u.b. in $\N$ and completely additive and suppose $f$ is $\ba$-MMA and $\ba$-MUEMLS. Then for any $\phi\in C(X)$, 
$$
   \lim_{N\to \infty} \frac{1}{N} \sum_{n=1}^{N} \mu^{2} (n) \phi (f^{a_{n}} x)  =\lim_{N\to \infty} \frac{1}{N} \sum_{1\leq n\in \mathcal{SF}\leq N} \phi (f^{a_{n}} x) =\frac{6}{\pi^{2}} \int_{K} \phi d\nu_{K}
$$
 for any $x\in Basin(K)$ where $\nu_{K}$ is the unique $f|K$ invariant probability measure.
 \end{corollary}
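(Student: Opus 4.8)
The plan is to mimic the argument of Theorem~\ref{sfub} but carry the dynamical function $\phi\circ f^{a_n}$ along instead of the pure exponential $e^{2\pi i a_n\theta}$, and then invoke Corollary~\ref{fcor1} at each step. First I would fix $x\in \mathrm{Basin}(K)$, let $\nu_K$ be the unique $(f|K)$-invariant measure, and write $\psi=\phi\circ f\colon X\to\C$ viewed through the orbit; the key arithmetic identity $\mu^2(n)=\sum_{m^2\mid n}\mu(m)$ gives
$$
\frac1N\sum_{1\le n\in\mathcal{SF}\le N}\phi(f^{a_n}x)
=\sum_{m=1}^{[\sqrt N]}\frac{\mu(m)}{m^2}\Big(\frac{1}{[N/m^2]}\sum_{n=1}^{[N/m^2]}\phi\big(f^{a_{m^2 n}}x\big)\Big)+o(1).
$$
Using complete additivity $a_{m^2 n}=a_n+a_{m^2}$, the inner average becomes $\frac{1}{[N/m^2]}\sum_{n=1}^{[N/m^2]}\phi\big(f^{a_n}(f^{a_{m^2}}x)\big)$, i.e. the $\ba$-time-average of $\phi$ at the shifted base point $y_m:=f^{a_{m^2}}x$ up to length $[N/m^2]$.

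The second step is to control each inner average by Corollary~\ref{fcor1}. For this I need $y_m\in \mathrm{Basin}(K')$ for some minimal $K'$; since $f$ is $\ba$-MMA, every point of $X$ lies in some basin, and I would argue that $y_m=f^{a_{m^2}}x$ in fact lies in $\mathrm{Basin}(K)$ with the same $K$ (because applying a power of $f$ to a point $\ba$-mean attracted to $K$ keeps it $\ba$-mean attracted to $K$ — this follows from $d(f^{a_n}f^k x,f^{a_n}f^k z)$ comparisons using uniform continuity of the finitely many maps $f,\dots,f^k$ together with $\ba$-MLS of $f|K$, exactly as in Proposition~\ref{meaneq}). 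Granting that, Corollary~\ref{fcor1} gives, for each fixed $m$,
$$
\lim_{N\to\infty}\frac{1}{[N/m^2]}\sum_{n=1}^{[N/m^2]}\phi\big(f^{a_n}y_m\big)=\int_K\phi\,d\nu_K=:c.
$$

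The third step is the uniform tail estimate, which is word-for-word the one in Theorem~\ref{sfub}: given $\epsilon>0$, since $\sum 1/m^2$ converges and the inner averages are bounded by $\|\phi\|$, choose $m_0$ so the tail $\sum_{m>m_0}$ contributes less than $\epsilon/3$ uniformly in $N$; then for the finitely many $m\le m_0$ use the limits just established (plus the classical $\sum_{m=1}^{[\sqrt N]}\mu(m)/m^2\to 6/\pi^2$) to see that
$$
\sum_{m=1}^{m_0}\frac{\mu(m)}{m^2}\Big(\frac{1}{[N/m^2]}\sum_{n=1}^{[N/m^2]}\phi(f^{a_n}y_m)\Big)\longrightarrow c\sum_{m=1}^{\infty}\frac{\mu(m)}{m^2}=\frac{6}{\pi^2}\int_K\phi\,d\nu_K,
$$
and the $o(1)$ from truncating $m^2\mid n$ with $n\le N$ is harmless. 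Combining the three estimates yields the claim. The main obstacle I anticipate is the second step: justifying that each shifted base point $y_m=f^{a_{m^2}}x$ stays in a basin with the \emph{same} limit measure $\nu_K$, so that the constant $c$ is independent of $m$ and can be pulled out of the sum. If $f$ is actually minimal and uniquely ergodic (the setting of Theorem~\ref{main1}) this is automatic; in the general $\ba$-MMA case one must check that $f^k$ maps $\mathrm{Basin}(K)$ into $\mathrm{Basin}(K)$, which is the point where $\ba$-MLS of $f|K$ is genuinely used and which deserves to be spelled out rather than dismissed as routine.
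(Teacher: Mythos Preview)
Your proposal is correct and is exactly the approach the paper intends: it states only ``Using the proof of Theorem~\ref{sfub} we obtain another corollary of Theorem~\ref{main1},'' i.e.\ rerun the $\mu^2(n)=\sum_{m^2\mid n}\mu(m)$ decomposition with $\phi(f^{a_n}x)$ in place of $e^{2\pi i a_n\theta}$, use complete additivity to shift the base point, and feed the inner averages into Corollary~\ref{fcor1}. The basin-invariance issue you flag is a legitimate detail the paper suppresses, but it is resolved more simply than you suggest: uniform continuity of the single map $f^{k}$ on the compact space $X$ (plus a Markov-type splitting of the average) already gives $f^{k}(\mathrm{Basin}(K))\subset\mathrm{Basin}(K)$, so $\ba$-MLS of $f|K$ is not needed there.
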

 
 As a byproduct of the proof of Theorem~\ref{sfub}, we have that 

\medskip
\begin{theorem}~\label{sfex}
List ${\mathcal{SF}} =(s_{n})$ as an increasing subsequence of $\N$. Then it is r.d. in $\N$ but not u.d. in $\Z$. Thus, it is not u.b. in $\N$.
\end{theorem}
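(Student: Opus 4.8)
The plan is to prove the two assertions separately, the second being the substantive one. For the claim that $(s_n)$ is \emph{not} u.d. in $\Z$, it suffices to exhibit a single bad modulus: take $m=4$ and $r=0$. The set $\{n\in\N\mid 4\mid s_n\}$ is empty, because $4\mid s_n$ would force $2^2\mid s_n$, contradicting that $s_n$ is square-free. Hence this set has density $0\ne 1/4$, so (\ref{eq3}) fails and $(s_n)$ is not u.d. in $\Z$. Now a u.b. in $\N$ sequence satisfies (\ref{eq1}) for \emph{every} $0<\theta<1$, hence in particular for every rational $\theta$, i.e.\ it satisfies (\ref{eq5}); and by the criterion recalled in Section~2, (\ref{eq5}) is equivalent to being u.d. in $\Z$. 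Therefore $(s_n)$, not being u.d. in $\Z$, cannot be u.b. in $\N$. This disposes of everything except r.d. in $\N$.

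So fix an irrational $\theta\in(0,1)$ and aim to show $(s_n\theta)$ is u.d.1. By the Weyl criterion it is enough that $\frac1N\sum_{n=1}^N e^{2\pi i h s_n\theta}\to 0$ for every integer $h\ne 0$, and since $h\theta$ is again irrational it suffices to prove
\[
\lim_{N\to\infty}\frac1N\sum_{n=1}^N e^{2\pi i s_n\theta}=0 \qquad\text{for every irrational }\theta .
\]
I would first translate this back into a weighted sum over an initial segment of $\N$: setting $M=s_N$, one has $\{s_1,\dots,s_N\}=\{m\le M\mid \mu^2(m)=1\}$, hence $\sum_{n=1}^N e^{2\pi i s_n\theta}=\sum_{m=1}^{M}\mu^2(m)e^{2\pi i m\theta}$, while $N/s_N\to 6/\pi^2$ by the density computation recalled before Definition~\ref{sfuds1}. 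Thus the claim reduces to showing $\frac1M\sum_{m=1}^M\mu^2(m)e^{2\pi i m\theta}\to 0$.

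This last limit is established exactly as in the proof of Theorem~\ref{sfub}, now specialized to the identity sequence (which is why the statement is advertised as a byproduct of that proof). Using $\mu^2(m)=\sum_{d^2\mid m}\mu(d)$ and interchanging the two summations,
\[
\sum_{m=1}^M\mu^2(m)e^{2\pi i m\theta}=\sum_{d\le\sqrt M}\mu(d)\sum_{k\le M/d^2}e^{2\pi i k d^2\theta}.
\]
Fix a cutoff $D\in\N$. For $d\le D$ the inner geometric sum has modulus at most $1/(2\|d^2\theta\|)$, where $\|\cdot\|$ denotes distance to the nearest integer; since $\theta$ is irrational so is $d^2\theta$, whence $\|d^2\theta\|>0$ and $\sum_{d\le D}1/(2\|d^2\theta\|)$ is a finite constant depending only on $D$ and $\theta$, contributing $o(1)$ after division by $M$. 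For $D<d\le\sqrt M$ I bound the inner sum trivially by $M/d^2$, so that block contributes at most $\sum_{d>D}M/d^2\le M/D$. Consequently $\limsup_{M\to\infty}\frac1M\bigl|\sum_{m=1}^M\mu^2(m)e^{2\pi i m\theta}\bigr|\le 1/D$ for every $D$, and letting $D\to\infty$ gives the desired vanishing. The only genuine obstacle is this analytic estimate; what makes it go through is that squaring preserves irrationality, so the finitely many ``main'' terms $d\le D$ give truly bounded geometric sums while the ``tail'' $d>D$ is controlled by the absolute convergence of $\sum d^{-2}$. Everything else — the Weyl reduction and the normalization $N/s_N\to 6/\pi^2$ — is routine bookkeeping.
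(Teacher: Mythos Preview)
Your proof is correct. For the r.d.\ in $\N$ part you follow essentially the same route as the paper: both arguments insert the identity $\mu^2(m)=\sum_{d^2\mid m}\mu(d)$, swap summations, and split at a cutoff so that finitely many small $d$ contribute bounded geometric sums (since $d^2\theta$ is irrational) while the tail is controlled by $\sum_{d>D}d^{-2}$. Your use of the explicit geometric-sum bound $1/(2\|d^2\theta\|)$ is a cosmetic variant of the paper's appeal to $\frac{1}{[N/m^2]}\sum_n e^{2\pi i n m^2\theta}\to 0$.

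For the failure of u.d.\ in $\Z$ the two proofs differ. The paper observes that for $\theta=1/m^2$ a term in the M\"obius decomposition does not vanish, and then cites the density formula $\frac{1}{m}\prod_{p\mid m}(1-p^{-2})^{-1}$ from the literature. Your argument is more elementary and fully self-contained: taking $m=4$, $r=0$ gives an empty residue class since no square-free integer is divisible by $4$, so the density is $0\ne 1/4$. This buys you a cleaner counterexample without any external reference, at the cost of not exhibiting the exact limiting densities.
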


\begin{proof}
For any irrational $0<\theta<1$, 
$$
\lim_{N\to\infty} \frac{1}{N} \sum_{n=1}^N \mu^2(n)e^{2\pi i n\theta}
=\frac{1}{N} \sum_{n=1}^{N}\Big(\sum_{m^{2} | n} \mu (m)\Big) e^{2\pi i n \theta} 
$$
$$
=\frac{1}{N} \sum_{n=1}^{N}\sum_{1\leq m\leq \sqrt{n}} 1_{m^{2} | n} (m) \mu (m) e^{2\pi i n \theta} 
=\frac{1}{N} \sum_{m=1}^{[\sqrt{N}]}\mu (m) \sum_{n=1}^{[\frac{N}{m^{2}}]} e^{2\pi i (m^{2}n)\theta} +o(1)
$$
$$
=\sum_{m=1}^{[\sqrt{N}]}\frac{\mu(m)}{m^2}\left(\frac{1}{[\frac{N}{m^2}]}\sum_{n=1}^{[\frac{N}{m^2}]}e^{2\pi i n(m^2\theta)}\right) +o(1).
$$
For every $\epsilon >0$, since $A=\sum_{m=1}^{\infty} 1/m^2<\infty$, we have a $m_{0}>0$ such that for any $N>m_{0}^{2}$, 
$$
\Big| \sum_{m=m_{0}+1}^{[\sqrt{N}]}\frac{\mu(m)}{m^2}\left(\frac{1}{[\frac{N}{m^2}]}\sum_{n=1}^{[\frac{N}{m^2}]}e^{2\pi i n(m^2\theta)}\right) +o(1)\Big|
$$
$$
\leq  \sum_{m=m_{0}+1}^{[\sqrt{N}]}\frac{|\mu(m)|}{m^2}\left|\frac{1}{[\frac{N}{m^2}]}\sum_{n=1}^{[\frac{N}{m^2}]}e^{2\pi i n(m^2\theta)}\right| +o(1) 
\leq  \sum_{m=m_{0}+1}^{[\sqrt{N}]}\frac{1}{m^2}+o(1) <\frac{\epsilon}{2}.
$$
Since  
\begin{equation}~\label{irr}
\lim_{N\to\infty} \frac{1}{[\frac{N}{m^2}]}\sum_{n=1}^{[\frac{N}{m^2}]}e^{2\pi i n(m^2\theta)} =0
\end{equation}
for all $1\leq m\leq m_{0}$, we have an integer $N_{0}>0$ such that for all $N>N_{0}$ and $1\leq m\leq m_{0}$,
$$
\Big| \frac{1}{[\frac{N}{m^2}]}\sum_{n=1}^{[\frac{N}{m^2}]}e^{2\pi i n(m^2\theta)}\Big| <\frac{\epsilon}{2A}.
$$
Then we have that 
$$
\Big| \sum_{m=1}^{m_{0}} \frac{\mu(m)}{m^2}\Big(\frac{1}{[\frac{N}{m^2}]}\sum_{n=1}^{[\frac{N}{m^2}]}e^{2\pi i n(m^2\theta)}\Big)\Big|
\leq \sum_{m=1}^{m_{0}} \frac{|\mu(m)|}{m^2}\Big| \frac{1}{[\frac{N}{m^2}]}\sum_{n=1}^{[\frac{N}{m^2}]}e^{2\pi i n(m^2\theta)}\Big|
\leq \frac{\epsilon}{2A} \sum_{m=1}^{m_{0}} \frac{1}{m^2}<\frac{\epsilon}{2}.
$$
This implies that for any $N>N_{0}$,
$$
\Big| \frac{1}{N} \sum_{n=1}^N \mu^2(n)e^{2\pi i n\theta}\Big| 
=\Big| \sum_{m=1}^{[\sqrt{N}]}\frac{\mu(m)}{m^2}\left(\frac{1}{[\frac{N}{m^2}]}\sum_{n=1}^{[\frac{N}{m^2}]}e^{2\pi i n(m^2\theta)}\right) +o(1)\Big| <\epsilon.
$$
Therefore, we get
$$
\lim_{N\to\infty} \frac{1}{N} \sum_{n=1}^N \mu^2(n)e^{2\pi i n\theta}=0.
$$
This says that ${\mathcal{SF}} =(s_{n})$ is r.d. in $\N$. However, the last limit does not hold for all rational numbers $0<\theta <1$; 
for example, for $\theta=1/m^{2}$, the limit in (\ref{irr}) is $1$. One can see (for example, refer to~\cite{Nu}) that 
$$
\lim_{N\to \infty} \frac{\# (\{1\leq n\leq N \;|\; s_{n}=r\pmod{m}\}}{N} =\frac{1}{m}\prod_{p|m} (1-\frac{1}{p^{2}})^{-1}
$$
and $\prod_{p|m} (1-1/p^{2})^{-1}\not=1$ in general. This implies that ${\mathcal{SF}} =(s_{n})$ is not u.d. in $\Z$ and, thus, is not u.b. in $\N$. 
\end{proof}

 \medskip
 \begin{definition}~\label{cm}
 We say a sequence $\ba$ in $\N$ is completely multiplicative if $a_{mn} =a_{n}a_{m}$ for all natural numbers $n$ and $m$.  
  \end{definition}
  
  The sequence ${\bf n} =(n)$ is completely multiplicative.  
 
 \medskip
 \begin{definition}~\label{sfrd} 
 We say a sequence $\ba$ in $\N$ is square-free rotationally distributed in $\N$ (s.f.r.d. in $\N$) if for any irrational number $0<\theta<1$, 
 $$
 \lim_{N\to \infty} \frac{1}{N} \sum_{1\leq n\in \mathcal{SF}\leq N} e^{2\pi i a_{n}\theta} =0.
 $$
   \end{definition} 
 
As suggested by the proof of Theorem~\ref{sfex}, we can have the following proposition whose proof is the same as that of Theorem~\ref{sfex}. 
 
\medskip
\begin{proposition}~\label{sfub2}
Suppose $\ba$ is u.b. in $\N$ and completely multiplicative. Then it is s.f.r.d. in $\N$.
\end{proposition}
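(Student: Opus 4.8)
The plan is to follow the exact structure of the proof of Theorem~\ref{sfex}, replacing the pure exponential character $e^{2\pi i n\theta}$ by the twisted exponential $e^{2\pi i a_n\theta}$ and using complete multiplicativity in place of complete additivity. First I would fix an irrational $0<\theta<1$ and expand $\mu^2(n)=\sum_{m^2\mid n}\mu(m)$, writing
$$
\frac{1}{N}\sum_{1\le n\in\mathcal{SF}\le N} e^{2\pi i a_n\theta}
=\frac{1}{N}\sum_{m=1}^{[\sqrt N]}\mu(m)\sum_{n=1}^{[N/m^2]} e^{2\pi i a_{m^2 n}\theta}+o(1).
$$
Now, in the completely multiplicative case, $a_{m^2 n}=a_{m^2}a_n=a_m^2 a_n$, so the inner sum is $\sum_{n=1}^{[N/m^2]} e^{2\pi i a_n(a_m^2\theta)}$. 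This is exactly where the argument diverges from Theorem~\ref{sfub}: instead of factoring out a fixed unimodular constant $e^{2\pi i a_{m^2}\theta}$, we are left with a character evaluated at the \emph{new} angle $a_m^2\theta$, which is still irrational (since $a_m\in\N$ and $\theta$ is irrational).

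Next I would invoke the hypothesis that $\ba$ is u.b. in $\N$: by Definition~\ref{uds1}, for \emph{every} $0<\psi<1$ — in particular for each $\psi=a_m^2\theta\bmod 1$ with $a_m^2\theta\notin\Z$ — we have $\frac{1}{M}\sum_{n=1}^{M} e^{2\pi i a_n\psi}\to 0$ as $M\to\infty$. (If $a_m^2\theta$ happened to be an integer, which cannot occur for irrational $\theta$, the term would be problematic, but irrationality rules this out; one should note $a_m\ge 1$ so the angle is genuinely nonzero mod $1$ is automatic from irrationality of $\theta$.) Then the standard tail estimate goes through verbatim: given $\epsilon>0$, pick $m_0$ with $\sum_{m>m_0} 1/m^2<\epsilon/2$ so the tail $\sum_{m=m_0+1}^{[\sqrt N]}\frac{\mu(m)}{m^2}\bigl(\frac{1}{[N/m^2]}\sum_{n\le [N/m^2]} e^{2\pi i a_n(a_m^2\theta)}\bigr)$ is bounded in modulus by $\epsilon/2+o(1)$; then for the finitely many $m\le m_0$, use u.b. in $\N$ at each angle $a_m^2\theta$ to find $N_0$ so that each normalized character sum is below $\epsilon/(2A)$ with $A=\sum 1/m^2$, making the head term $<\epsilon/2$ as well. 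Combining gives $\bigl|\frac{1}{N}\sum_{1\le n\in\mathcal{SF}\le N} e^{2\pi i a_n\theta}\bigr|<\epsilon$ for all large $N$, hence the limit is $0$, which is precisely s.f.r.d. in $\N$ by Definition~\ref{sfrd}.

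The main obstacle — really the only subtlety — is bookkeeping around the angle substitution: one must be careful that $a_m^2\theta$ is irrational for every $m$ so that u.b. in $\N$ applies, and that the $o(1)$ error from passing between $\sum_{n=1}^{N}$-style and $\sum_{m^2 n\le N}$-style indexing (the discrepancy of at most one block per $m$, summing to $O(\sqrt N)$ terms, i.e. $O(\sqrt N/N)=o(1)$) is genuinely uniform in the way used. Everything else is a routine repackaging of the Theorem~\ref{sfex} argument, with complete multiplicativity supplying $a_{m^2n}=a_m^2 a_n$ exactly where complete additivity supplied $a_{m^2n}=a_n+a_{m^2}$ in Theorem~\ref{sfub}. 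Since the statement only claims the conclusion for irrational $\theta$ (the rational case genuinely fails, just as for $\mathcal{SF}$ itself), no further work is needed.
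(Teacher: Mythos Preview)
Your proposal is correct and follows exactly the route the paper intends: the paper states that the proof of Proposition~\ref{sfub2} ``is the same as that of Theorem~\ref{sfex}'', and your argument is precisely that adaptation---expand $\mu^2(n)=\sum_{m^2\mid n}\mu(m)$, use complete multiplicativity to write $a_{m^2n}=a_m^2 a_n$ so the inner sum becomes an exponential sum along $\ba$ at the irrational angle $a_m^2\theta\bmod 1$, invoke u.b.\ in $\N$ at that angle, and conclude with the same head/tail $\sum 1/m^2$ estimate. Your attention to the one genuine subtlety (that $a_m^2\theta$ remains irrational since $a_m\in\N$ and $\theta$ is irrational, so Definition~\ref{uds1} applies) is exactly what is needed.
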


 \medskip
 \begin{definition}~\label{sfud} 
 We say a sequence $\ba$ in $\N$ is square-free uniformly distributed in $\Z$ (s.f.u.d. in $\Z$) if for any rational number $0<\theta<1$, 
 $$
 \lim_{N\to \infty} \frac{1}{N} \sum_{1\leq n\in \mathcal{SF}\leq N} e^{2\pi i a_{n}\theta} =0.
 $$
 \end{definition}
 
 The sequence ${\bf n} =(n)$  is u.b. in $\N$ and completely multiplicative, but Theorem~\ref{sfex} says it is s.f.r.d in $\N$ but not s.f.u.d. in $\Z$, thus, not s.f.u.b. in $\N$.
 
  \medskip
 \begin{corollary}~\label{fcor3}
 Suppose $\ba$ is u.b. in $\N$ and completely multiplicative and s.f.u.d. in $\Z$. Suppose $f$ is $\ba$-MMA and $\ba$-MUEMLS. 
 Then for any $\phi\in C(X)$, 
$$
   \lim_{N\to \infty} \frac{1}{N} \sum_{n=1}^{N} \mu^{2} (n) \phi (f^{a_{n}} x)  =\lim_{N\to \infty} \frac{1}{N} \sum_{1\leq n\in \mathcal{SF}\leq N} \phi (f^{a_{n}} x) =\frac{6}{\pi^{2}} \int_{K} \phi d\nu_{K}
$$
 for any $x\in Basin(K)$ where $\nu_{K}$ is the unique $f|K$ invariant probability measure.
 \end{corollary}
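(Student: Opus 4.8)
The plan is to reduce the corollary to square-free analogues of Theorem~\ref{main1} and Corollary~\ref{fcor1}; the one genuinely new ingredient is that, under the present hypotheses, $\ba$ is square-free uniformly behaved in $\N$. For this, Proposition~\ref{sfub2} already gives, from $\ba$ being u.b. in $\N$ and completely multiplicative, that $\ba$ is s.f.r.d. in $\N$, i.e.
\[
\lim_{N\to\infty}\frac1N\sum_{1\le n\in\mathcal{SF}\le N}e^{2\pi i a_n\theta}=0
\]
for every irrational $0<\theta<1$; and the hypothesis that $\ba$ is s.f.u.d. in $\Z$ is precisely the same limit for every rational $0<\theta<1$. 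Together these two cases cover all $0<\theta<1$, which is exactly Definition~\ref{sfuds1}: $\ba$ is s.f.u.b. in $\N$.

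Next I would prove the square-free Main Theorem: if $\ba$ is s.f.u.b. in $\N$ and $f:X\to X$ is minimal, uniquely ergodic and $\ba$-MLS, then for every $\phi\in C(X)$ and every $x\in X$,
\[
\lim_{N\to\infty}\frac1N\sum_{1\le n\in\mathcal{SF}\le N}\phi(f^{a_n}x)=\frac6{\pi^2}\int_X\phi\,d\nu ,
\]
$\nu$ the unique $f$-invariant probability measure. Writing $S^{\mathcal{SF}}_{N,\ba}\phi(x)=\frac1N\sum_{1\le n\in\mathcal{SF}\le N}\phi(f^{a_n}x)$, the proof runs exactly as that of Theorem~\ref{main1} with $\frac1N\sum_{n=1}^N$ replaced throughout by $\frac1N\sum_{1\le n\in\mathcal{SF}\le N}$, modulo two harmless points. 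First, the spectral lemma and the same computation give
\[
\big\|(S^{\mathcal{SF}}_{N,\ba}\phi)\circ f-S^{\mathcal{SF}}_{N,\ba}\phi\big\|_{L^2(\nu)}=\Big\|(e^{2\pi i\theta}-1)\,\tfrac1N\sum_{1\le n\in\mathcal{SF}\le N}e^{2\pi i a_n\theta}\Big\|_{L^2(\nu_\phi)},
\]
which tends to $0$ by the first step and dominated convergence; then $\ba$-MLS makes $\{S^{\mathcal{SF}}_{N,\ba}\phi\}_N$ equicontinuous exactly as in Proposition~\ref{meaneq} (restricting the summation to square-free $n$ only decreases the exceptional-set contribution), so Ascoli--Arzela together with the full support of $\nu$ (minimality) upgrade the $L^2$ statement to uniform convergence $(S^{\mathcal{SF}}_{N,\ba}\phi)\circ f-S^{\mathcal{SF}}_{N,\ba}\phi\to0$; finally the measures $\frac1N\sum_{1\le n\in\mathcal{SF}\le N}\delta_{f^{a_n}x}$ are no longer probability measures but have total mass $\frac1N\#\{1\le n\le N:n\in\mathcal{SF}\}\to D(\mathcal{SF})=\frac6{\pi^2}$, so every weak-$*$ limit is an $f$-invariant measure of mass $\frac6{\pi^2}$, hence $\frac6{\pi^2}\nu$ by unique ergodicity, which gives the displayed limit at every $x$.

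The third step, passing from this to the corollary for $f$ that is merely $\ba$-MMA and $\ba$-MUEMLS, is word-for-word the passage from Theorem~\ref{main1} to Corollary~\ref{fcor1}. Given a minimal $K\subseteq X$ and $x\in\mathrm{Basin}(K)$, for $\epsilon>0$ pick $z=z_{\epsilon,x}\in K$ with $\limsup_N\frac1N\sum_{n=1}^Nd(f^{a_n}x,f^{a_n}z)<\epsilon$; since $\frac1N\sum_{1\le n\in\mathcal{SF}\le N}d(f^{a_n}x,f^{a_n}z)\le\frac1N\sum_{n=1}^Nd(f^{a_n}x,f^{a_n}z)$, uniform continuity of $\phi$ makes $\limsup_N|S^{\mathcal{SF}}_{N,\ba}\phi(x)-S^{\mathcal{SF}}_{N,\ba}\phi(z)|$ as small as wanted by taking $\epsilon$ small, while $S^{\mathcal{SF}}_{N,\ba}\phi(z)\to\frac6{\pi^2}\int_K\phi\,d\nu_K$ by the second step applied to $f|K$, which is minimal, uniquely ergodic and $\ba$-MLS because $f$ is $\ba$-MUEMLS. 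Letting $\epsilon\to0$ gives $\lim_N S^{\mathcal{SF}}_{N,\ba}\phi(x)=\frac6{\pi^2}\int_K\phi\,d\nu_K$; the first equality in the corollary is simply that $\mu^2$ is the indicator of $\mathcal{SF}$.

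The reason one cannot just imitate Corollary~\ref{fcor2} is that its proof uses $a_{m^2n}=a_n+a_{m^2}$ to split the sum over $\mathcal{SF}$ into dyadic blocks $m^2\cdot\{1,\dots,[N/m^2]\}$ and feed each into Corollary~\ref{fcor1}; under complete multiplicativity one only has $a_{m^2n}=a_{m^2}a_n$, so $f^{a_{m^2n}}x=(f^{a_{m^2}})^{a_n}x$ is governed by the unrelated map $f^{a_{m^2}}$ and that reduction fails. This is exactly why the square-free convergence has to be re-established at the level of the spectral measure $\nu_\phi$ (the second step), which is where full s.f.u.b. in $\N$ --- and not just s.f.r.d. in $\N$ --- is needed; I expect the only real work to be the two bookkeeping points above (the equicontinuity estimate surviving the square-free restriction, and the normalization $6/\pi^2$ replacing $1$ in the weak-$*$ limit argument), and both look routine.
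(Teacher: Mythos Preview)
Your proof is correct but takes a genuinely different route from the paper. Both begin by establishing that $\ba$ is s.f.u.b. in $\N$ (combining Proposition~\ref{sfub2} for irrational $\theta$ with the s.f.u.d. in $\Z$ hypothesis for rational $\theta$). From there the paper takes a shortcut you do not: it reindexes, setting $b_n=a_{s_n}$ where $(s_n)$ enumerates $\mathcal{SF}$, and computes
\[
\frac{1}{N}\sum_{n=1}^N e^{2\pi i b_n\theta}=\frac{s_N}{N}\cdot\frac{1}{s_N}\sum_{1\le m\in\mathcal{SF}\le s_N}e^{2\pi i a_m\theta}\longrightarrow \frac{\pi^2}{6}\cdot 0=0,
\]
so $\mathbf{b}=(b_n)$ is itself u.b. in $\N$; then Theorem~\ref{main1} applied to $\mathbf{b}$ as a black box gives $\frac{1}{M}\sum_{n=1}^M\phi(f^{b_n}x)\to\int_K\phi\,d\nu_K$ for $x\in K$, and writing the square-free sum as $\frac{M}{N}\cdot\frac{1}{M}\sum_{n=1}^M\phi(f^{b_n}x)$ with $M=\#(\mathcal{SF}\cap[1,N])$ and $M/N\to6/\pi^2$ finishes. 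Your approach instead reproves Theorem~\ref{main1} for the square-free averages from scratch (spectral identity, equicontinuity via $\ba$-MLS, Ascoli--Arzel\`a, weak-$*$ limits of total mass $6/\pi^2$). The paper's route is shorter since it reuses Theorem~\ref{main1} wholesale; your route is more self-contained and has the minor advantage that it works directly from the $\ba$-MLS hypothesis, whereas the paper's invocation of Theorem~\ref{main1} for the reindexed sequence $\mathbf{b}$ strictly speaking requires $f|K$ to be $\mathbf{b}$-MLS, a point the paper does not pause to verify.
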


\begin{proof}
We can not follow the proof of Theorem~\ref{sfub} as we did for Corollary~\ref{fcor2}. We need to give a new proof. 
We only need to prove that for any minimal subset $K$, we have that, for any $\phi \in C(K)$ and $x\in K$,
$$
   \lim_{N\to \infty} \frac{1}{N} \sum_{1\leq n\in {\mathcal{SF}} \leq N} \phi (f^{a_{n}} x)  =\frac{6}{\pi^{2}} \int_{K} \phi d\nu_{K}
$$
because the rest of the proof will be the same as that in~\cite{FJ}.

Let $b_{n}=a_{s_{n}}$. From Proposition~\ref{sfub2} and the assumption, $\ba$ is s.f.u.b. in $\N$, then for $0<\theta<1$, we have 
$$
\lim_{N\to \infty} \frac{1}{N} \sum_{n=1}^{N} e^{2\pi i b_{n} \theta}
=\Big( \lim_{N\to \infty} \frac{s_{N}}{N}\Big)\Big( \lim_{N\to \infty} \frac{1}{s_{N}} \sum_{1\leq n\in {\mathcal{SF}}\leq s_{N}} e^{2\pi i a_{n} \theta}\Big)= \frac{\pi^{2}}{6} \times 0=0.
$$
This says that ${\bf b} =(b_{n})$ is u.b. in $\N$.  Theroem~\ref{main1} implies that, for any $\phi \in C(K)$ and $x\in K$, 
$$
\lim_{N\to \infty} \frac{1}{N} \sum_{n=1}^{N} \phi (f^{b_{n}} x)  =\int_{K} \phi d\nu_{K}.
$$
Let ${\mathcal{SF}}\cap [1, N] =\{ s_{1}, \cdots, s_{M}\}$. Then
$$
\lim_{N\to \infty} \frac{1}{N} \sum_{1\leq n\in \mathcal{SF}\leq N} \phi (f^{a_{n}} x) = 
\lim_{N\to \infty} \frac{M}{N} \frac{1}{M}  \sum_{n=1}^{M} \phi (f^{a_{s_{n}}} x) 
$$
$$
=\Big( \lim_{N\to \infty} \frac{M}{N} \Big) \Big(\lim_{M\to \infty}  \frac{1}{M}  \sum_{n=1}^{M} \phi (f^{b_{n}} x)\Big) =\frac{6}{\pi^{2}} \int_{K} \phi d\nu_{K}.
$$
\end{proof}

An example of u.b. in $\N$ sequences is given by the prime omega functions, which count the number of prime factors of a natural number. 
From Euler's theorem, any natural number $n$ can be factored into the product of prime numbers (up to ordering), that is, 
$$
n=p_{1}^{k_{1}} p_{2}^{k_{2}}\cdots p_{m}^{k_{m}},
$$ 
where $p_{1}$, $p_{2}$, $\cdots$, $p_{m}$ are distinct prime numbers and $k_{1}, k_{2}, \cdots, k_{m}\geq 1$. 
The counting function of prime factors with multiplicity is 
$$
\Omega (n) = \sum_{i=1}^{m} k_{i}
$$
and the counting function of prime factors without multiplicity is 
$$
\omega (n) =m.
$$
Pillai and Selberg proved in the 1940s that ${\bf \Omega}=(\Omega (n))$ is u.d. in $\Z$ in~\cite{P,Se}. From the paper~\cite{D} of Delange in 1950s, we know that ${\bf \Omega}$ is r.d. in $\N$.  
Therefore, ${\bf \Omega}$ is u.b. in $\N$. We also know that ${\bf \Omega}$ is completely additive. Thus we have the following consequences of Theorem~\ref{main1} and Corollary~\ref{fcor2}:

\medskip
 \begin{corollary}~\label{na1}
 Suppose $f$ is ${\bf \Omega}$-MMA and ${\bf \Omega}$-MUEMLS. Then for any $\phi\in C(X)$, 
 \begin{equation}~\label{conv2}
\lim_{N\to \infty} \frac{1}{N} \sum_{n=1}^{N} \phi (f^{\Omega (n)} x) =\int_{K} \phi d\nu_{K}
 \end{equation}
 for any $x\in Basin(K)$ where $\nu_{K}$ is the unique $f|K$ invariant probability measure.
 \end{corollary}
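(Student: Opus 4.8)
The plan is to recognize Corollary~\ref{na1} as nothing more than the specialization of Corollary~\ref{fcor1} to the sequence $\ba={\bf\Omega}=(\Omega(n))$, so the only genuine step is to check that ${\bf\Omega}$ is uniformly behaved in $\N$ in the sense of Definition~\ref{uds1}. I would do this by splitting the parameter $0<\theta<1$ into the rational and the irrational case. For rational $\theta$, the theorem of Pillai and Selberg~\cite{P,Se} says that ${\bf\Omega}$ is uniformly distributed in $\Z$, which by the criterion recalled in (\ref{eq5}) is exactly the statement that $\frac1N\sum_{n=1}^N e^{2\pi i\Omega(n)\theta}\to 0$ for every rational $0<\theta<1$. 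For irrational $\theta$, Delange's theorem~\cite{D} says that ${\bf\Omega}$ is rotationally distributed in $\N$, i.e. $(\Omega(n)\theta)$ is u.d.1; applying the Weyl criterion (\ref{eq4}) with $h=1$ gives $\frac1N\sum_{n=1}^N e^{2\pi i\Omega(n)\theta}\to 0$ for every irrational $0<\theta<1$. Combining the two cases, (\ref{eq1}) holds for all $0<\theta<1$, so ${\bf\Omega}$ is u.b. in $\N$. (I would also note in passing that ${\bf\Omega}$ is completely additive, $\Omega(mn)=\Omega(m)+\Omega(n)$, directly from unique factorization; this is what would be needed for the square-free companion statement, though not for (\ref{conv2}) itself.)

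Having established that ${\bf\Omega}$ is u.b. in $\N$, I would simply invoke Corollary~\ref{fcor1} with $\ba={\bf\Omega}$: since $f$ is by hypothesis ${\bf\Omega}$-MMA and ${\bf\Omega}$-MUEMLS, (\ref{conv1}) yields, for every $\phi\in C(X)$ and every $x\in Basin(K)$, the identity $\lim_{N\to\infty}\frac1N\sum_{n=1}^N\phi(f^{\Omega(n)}x)=\int_K\phi\,d\nu_K$, which is precisely (\ref{conv2}). This completes the argument.

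There is essentially no obstacle internal to this proof: the entire substance lives in the cited number-theoretic inputs of Pillai--Selberg and Delange, and in Theorem~\ref{main1}, whose proof is the technical core of the paper. If one wished to avoid quoting Corollary~\ref{fcor1} as a black box, one would instead rerun its underlying argument directly: decompose $X=\bigcup_K Basin(K)$ over minimal subsets, fix $x\in Basin(K)$, approximate the orbit averages of $x$ by those of a point $z\in K$ using the ${\bf\Omega}$-mean attraction estimate (\ref{MA}) together with the equicontinuity of $\{S_{N,{\bf\Omega}}\phi\}$ supplied by Proposition~\ref{meaneq}, and then apply Theorem~\ref{main1} to the minimal, uniquely ergodic, ${\bf\Omega}$-MLS system $(K,f|K)$. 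In that route the only point needing care is the uniform-in-$N$ control of the error coming from (\ref{MA}), handled exactly as in~\cite{FJ}.
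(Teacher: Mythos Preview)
Your proposal is correct and matches the paper's own argument essentially verbatim: the paper establishes that ${\bf\Omega}$ is u.b.\ in $\N$ by combining Pillai--Selberg (u.d.\ in $\Z$, hence the rational case via (\ref{eq5})) with Delange (r.d.\ in $\N$, hence the irrational case via (\ref{eq4})), and then states Corollary~\ref{na1} as an immediate consequence of Theorem~\ref{main1}/Corollary~\ref{fcor1}. Your optional unpacking of Corollary~\ref{fcor1} via the ${\bf\Omega}$-mean attraction estimate and Proposition~\ref{meaneq} is also exactly how the paper indicates that step goes (referring to~\cite{FJ}).
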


  \medskip
 \begin{corollary}~\label{na2}
  Suppose $f$ is ${\bf \Omega}$-MMA and ${\bf \Omega}$-MUEMLS. Then for any $\phi\in C(X)$, 
 \begin{equation}~\label{conv3}
  \lim_{N\to \infty} \frac{1}{N} \sum_{1\leq n\in \mathcal{SF}\leq N} \phi (f^{\omega (n)} x) =\frac{6}{\pi^{2}} \int_{K} \phi d\nu_{K}
 \end{equation}
 for any $x\in Basin(K)$ where $\nu_{K}$ is the unique $f|K$ invariant probability measure.
 \end{corollary}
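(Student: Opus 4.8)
The plan is to reduce the statement to Corollary~\ref{fcor2} by means of a single elementary observation from number theory, namely that the two prime-counting functions coincide on square-free integers.

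First I would record that if $n\in\mathcal{SF}$ then, in its prime factorization $n=p_1 p_2\cdots p_m$, every exponent equals $1$, whence $\Omega(n)=\sum_{i=1}^{m}1=m=\omega(n)$. Consequently, for every $x\in X$, every $\phi\in C(X)$, and every $N$,
$$
\frac{1}{N}\sum_{1\le n\in\mathcal{SF}\le N}\phi(f^{\omega(n)}x)=\frac{1}{N}\sum_{1\le n\in\mathcal{SF}\le N}\phi(f^{\Omega(n)}x),
$$
so it suffices to analyze the right-hand side. This is exactly the place where the restriction to $\mathcal{SF}$ is indispensable: the arithmetic function $\omega$ is additive but \emph{not} completely additive (for instance $\omega(4)=1\neq 2=\omega(2)+\omega(2)$), so the sequence $(\omega(n))$ cannot be substituted directly into Corollary~\ref{fcor2}; the argument must be routed through ${\bf\Omega}$, which \emph{is} completely additive.

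Next I would invoke the facts recalled just above: ${\bf\Omega}=(\Omega(n))$ is u.b. in $\N$ --- the combination of the Pillai--Selberg theorem that ${\bf\Omega}$ is u.d. in $\Z$ with Delange's theorem that ${\bf\Omega}$ is r.d. in $\N$ --- and ${\bf\Omega}$ is completely additive since $\Omega(mn)=\Omega(m)+\Omega(n)$ for all $m,n\in\N$. Since $f$ is by hypothesis ${\bf\Omega}$-MMA and ${\bf\Omega}$-MUEMLS, Corollary~\ref{fcor2} applied to $\ba={\bf\Omega}$ yields, for every $\phi\in C(X)$ and every $x\in Basin(K)$,
$$
\lim_{N\to\infty}\frac{1}{N}\sum_{1\le n\in\mathcal{SF}\le N}\phi(f^{\Omega(n)}x)=\frac{6}{\pi^{2}}\int_{K}\phi\,d\nu_{K},
$$
with $\nu_{K}$ the unique $(f|K)$-invariant probability measure. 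Combining this with the term-by-term identity of the previous paragraph gives the limit in (\ref{conv3}).

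Since the whole argument is this short reduction, there is no substantive obstacle to overcome. The only point requiring care --- which I would state explicitly --- is the observation that $\omega=\Omega$ holds precisely on $\mathcal{SF}$, so that one never needs an independent convergence analysis for the non-completely-additive sequence $(\omega(n))$; everything is inherited from the already-established Corollary~\ref{fcor2} for ${\bf\Omega}$.
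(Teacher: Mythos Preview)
Your proposal is correct and follows essentially the same approach as the paper: the paper simply states that Corollary~\ref{na2} is a consequence of Corollary~\ref{fcor2} (together with the facts that ${\bf\Omega}$ is u.b. in $\N$ and completely additive), and the only missing link is precisely your observation that $\omega(n)=\Omega(n)$ for $n\in\mathcal{SF}$.
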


\medskip
\begin{remark} 
In a recent paper~\cite{R} (see also ~\cite{BR}), Richter gives a new proof of Pillai and Selberg's result and Delange's result by proving the formula,
$$
\frac{1}{N} \sum_{n=1}^{N} \phi(\Omega (n)+1) =\frac{1}{N} \sum_{n=1}^{N} \phi(\Omega (n)) +o_{N\to \infty} (1)
$$
for any bounded function $\phi: \N\to \C$. 
 This formula gives an alternative proof of Corollary~\ref{na1}.
 However, the ideas of the proof are very different from ours. 
 \end{remark}

The Liouville function is defined as 
$$
\lambda (n) = (-1)^{\Omega(n)} =e^{\pi i \Omega (n)}. 
$$ 
Since ${\bf \Omega}$ is u.b. in $\N$, for $\theta=1/2$, we have 
\begin{equation}~\label{zero1}
\lim_{N\to \infty} \frac{1}{N}\sum_{n=1}^{N} \lambda (n) =\lim_{N\to \infty} \frac{1}{N}\sum_{n=1}^{N} e^{\pi i \Omega (n)} =0.
\end{equation}
Another way to see (\ref{zero1}) is to take $X=\{0, 1\}$ and $f x =x+1 \pmod{2}: X\to X$. The flow $(X,f)$ is minimal,  uniquely ergodic with 
the unique $f$-invariant probability measure $\nu (\{0\})=1/2$ and $\nu (\{1\})=1/2$, and equicontinuous. 
Let $\phi : X\to \{-1, 1\}$ as $\phi (0)=1$ and $\phi (1) =-1$. Then the  Liouville function is given by 
$$
\lambda (n) = \phi (f^{\Omega(n)}0)
$$
and, from Corollary~\ref{na1}, 
$$
\lim_{N\to \infty} \frac{1}{N}\sum_{n=1}^{N} \lambda (n) =\lim_{N\to \infty} \frac{1}{N}\sum_{n=1}^{N} \phi (f^{\Omega(n)})
 =\int_{X} \phi d\nu =\frac{1}{2}-\frac{1}{2}=0.
$$
The M\"obius function is given by $\mu (n)=\lambda (n)$ for $n\in {\mathcal{SF}}$ and $\mu (n)=0$ for $n\in \N\setminus {\mathcal{SF}}$.
From Corollary~\ref{na2}, we get that
\begin{equation}~\label{zero2}
\lim_{N\to \infty} \frac{1}{N}\sum_{n=1}^{N} \mu (n) =\lim_{N\to \infty} \frac{1}{N} \sum_{1\leq n\in \mathcal{SF}\leq N} \phi (f^{\omega(n)}0)
 =\frac{6}{\pi^{2}}\int_{X} \phi d\nu =0.
\end{equation}
This limit is known to be equivalent to the prime number theorem, 
\begin{equation}~\label{pnt}
 \lim_{N\to \infty} \frac{\pi (N)\log N}{N}= 1,
\end{equation}
where $\pi(N)$ is the number of prime numbers between $1$ and $N$.

\section{\bf Linear Disjointness and Uniformly Behaved in $\N$ Sequences}

In~\cite{FJ}, we defined the linear disjointness of a sequence of complex numbers with a dynamical system. 
Suppose $\bc =(c_{n})$ is a sequence of complex numbers, where $n$ runs in $\N$.  Suppose $f: X\to X$ is a (piece-wise) continuous dynamical system from a compact metric space into itself. 

\medskip
\begin{definition}~\label{ld}
We say $\bc$ is linearly disjoint with $f$ if 
$$
\lim_{N\to \infty} \frac{1}{N} \sum_{n=1}^{N} c_{n} \phi (f^{n}x) =0, \quad \forall \phi \in C(X),\;\; \forall x\in X, 
$$
\end{definition}

Suppose $A$ is a subset of $\N$. Then, we can define an arithmetic function $t_{n}$ as its indicator by $t_{n} =1$ if $n\in A$ and $t_{n}=0$ otherwise. Let 
$$
c_{n} =2 t_{n}-1. 
$$
Then $\bc =(c_{n})$ is a sequence of complex numbers taking values in $\{-1, 1\}$. 
%
List $A=(a_{n})$ as an increasing subsequence of $\N$. Then $\#(A\cap [1,a_{N}]) =N$.
We have that, for any $\phi\in C(X)$,
$$
\frac{N}{a_{N}} \frac{1}{N} \sum_{n=1}^{N} \phi (f^{a_{n}}x) =\frac{1}{a_{N}} \sum_{n=1}^{a_{N}} t_{n} \phi (f^{n}x) 
$$
$$
=
\frac{1}{2} \frac{1}{a_{N}} \sum_{n=1}^{a_{N}} c_{n} \phi (f^nx) +\frac{1}{2} \frac{1}{a_{N}} \sum_{n=1}^{a_{N}} \phi (f^{n}x).
$$
Suppose the density 
$$
D(A) =\lim_{N\to \infty} \frac{\#(A\cap[1,a_{N}])}{a_{N}} = \lim_{N\to \infty}  \frac{N}{a_{N}}  
$$
exists. The following theorem is easily to prove.

\medskip
\begin{theorem}~\label{ldt}
Suppose $f$ is uniquely ergodic and suppose $\nu$ is the unique $f$-invariant probability measure. Then  
$\bc$ is linearly disjoint with $f$ if and only if $D(A) =1/2$ and 
$$
\lim_{N\to \infty}  \frac{1}{N} \sum_{n=1}^{N} \phi (f^{a_{n}}x)  = \int_{X} \phi d\nu, \quad \forall \phi \in C(X), \;\;\forall x\in X.
$$
\end{theorem}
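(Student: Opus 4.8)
The plan is to run the displayed identity
\[
\frac{N}{a_N}\,S_{N,\ba}\phi(x)=\frac{1}{2}\cdot\frac{1}{a_N}\sum_{n=1}^{a_N}c_n\phi(f^nx)+\frac{1}{2}\cdot\frac{1}{a_N}\sum_{n=1}^{a_N}\phi(f^nx)
\]
in both directions, using two standard facts. First, since $A=(a_n)$ is listed increasingly, the statement $\#(A\cap[1,M])/M\to 1/2$ as $M\to\infty$ is equivalent to $\lim_{N\to\infty}N/a_N=1/2$, i.e.\ to $D(A)=1/2$; this is elementary counting-function bookkeeping (for $a_N\le M<a_{N+1}$ one has $N/a_{N+1}<\#(A\cap[1,M])/M\le N/a_N$). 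Second, since $f$ is uniquely ergodic on the compact metric space $X$, the ``for every point'' form of Birkhoff's theorem recalled in the introduction gives $\frac1M\sum_{n=1}^{M}\phi(f^nx)\to\int_X\phi\,d\nu$ for every $x\in X$ and every $\phi\in C(X)$.

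For the forward implication, assume $\bc$ is linearly disjoint with $f$. Taking $\phi\equiv 1$ yields $\frac1N\sum_{n=1}^{N}c_n=2\,\#(A\cap[1,N])/N-1\to 0$, hence $D(A)=1/2$ by the first fact. Now let $N\to\infty$ in the identity (so $a_N\to\infty$): the first term on the right is a subsequence of the null sequence $\frac1M\sum_{n=1}^{M}c_n\phi(f^nx)$, hence tends to $0$, while the second tends to $\frac12\int_X\phi\,d\nu$ by unique ergodicity; since the left side is $\frac{N}{a_N}S_{N,\ba}\phi(x)$ with $N/a_N\to 1/2$, dividing gives $S_{N,\ba}\phi(x)\to\int_X\phi\,d\nu$.

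For the converse, assume $D(A)=1/2$ and $S_{N,\ba}\phi(x)\to\int_X\phi\,d\nu$ for all $\phi\in C(X)$, $x\in X$. Solving the identity for the $c_n$-average and letting $N\to\infty$ (using $N/a_N\to 1/2$ and unique ergodicity for the last term) shows $\frac1{a_N}\sum_{n=1}^{a_N}c_n\phi(f^nx)\to0$; thus linear disjointness holds \emph{along the subsequence} $M=a_N$. To pass to arbitrary $M$, write $a_N\le M<a_{N+1}$; since every index in $(a_N,M]$ lies outside $A$ we have $c_n=-1$ there, so
\[
\frac1M\sum_{n=1}^{M}c_n\phi(f^nx)=\frac{a_N}{M}\cdot\frac1{a_N}\sum_{n=1}^{a_N}c_n\phi(f^nx)-\frac1M\sum_{n=a_N+1}^{M}\phi(f^nx),
\]
where the first term is bounded in modulus by $\bigl|\frac1{a_N}\sum_{n=1}^{a_N}c_n\phi(f^nx)\bigr|\to 0$ and the second by $\bigl(\frac{a_{N+1}}{a_N}-1\bigr)\|\phi\|$. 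The one non-formal point --- the \textbf{main obstacle} --- is to show $a_{N+1}/a_N\to 1$, which is exactly where the value $1/2$ of the density is used: if $a_{N_k+1}\ge(1+c)a_{N_k}$ along a subsequence for some $c>0$, then at $M_k=a_{N_k+1}-1$ the counting function equals $N_k\sim a_{N_k}/2$ while $M_k\ge(1+c)a_{N_k}-1$, forcing $\#(A\cap[1,M_k])/M_k\to\frac1{2(1+c)}<\frac12$, contradicting the first fact. Granting $a_{N+1}/a_N\to 1$, both terms above vanish as $N\to\infty$, and since the intervals $[a_N,a_{N+1})$ exhaust all large $M$ this yields $\frac1M\sum_{n=1}^{M}c_n\phi(f^nx)\to 0$ for every $\phi$ and $x$. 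Everything used is continuity of $\phi$, unique ergodicity, and elementary density estimates, which is why the theorem is ``easily proved''.
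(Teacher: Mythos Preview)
Your proof is correct and proceeds exactly along the identity the paper displays immediately before the theorem; the paper itself omits a detailed argument, saying only that the result is ``easily to prove.'' One cosmetic remark: $a_{N+1}/a_N\to 1$ already follows from $D(A)$ existing and being positive (since $a_N/N\to 1/D(A)$ and $a_{N+1}/(N{+}1)\to 1/D(A)$), so the specific value $1/2$ is not what is used at that step---the value $1/2$ is what makes the two $\int_X\phi\,d\nu$ contributions cancel in the main identity.
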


As a consequence of our main result, we have that 
\medskip
 \begin{corollary}~\label{ldc1}
 Suppose $\ba$ is u.b. in $\N$ and suppose $f$ is $\ba$-MMA and $\ba$-MUEMLS. Then $\bc$ is linearly disjoint with $f$ if and only if $D(A)=1/2$. 
 \end{corollary}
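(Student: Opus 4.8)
The two implications are of quite different character. The forward one uses nothing about $f$: if $\bc$ is linearly disjoint with $f$ I would take $\phi\equiv 1$ in Definition~\ref{ld}, so that (using $c_n=2t_n-1$ and $\sum_{n=1}^N t_n=\#(A\cap[1,N])$) we get $0=\lim_N\frac1N\sum_{n=1}^N c_n=\lim_N\bigl(\tfrac{2\#(A\cap[1,N])}{N}-1\bigr)$; hence $\#(A\cap[1,N])/N\to 1/2$, i.e. $D(A)=1/2$.

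For the converse I would assume $D(A)=1/2$, so that $M(N):=\#(A\cap[1,N])$ satisfies $M(N)/N\to 1/2$. Fix $\phi\in C(X)$ and $x\in X$; by $\ba$-MMA choose a minimal set $K$ with $x\in Basin(K)$, let $\nu_K$ be its (unique, by $\ba$-MUEMLS) invariant probability measure, and put $I=\int_K\phi\,d\nu_K$. From $c_n=2t_n-1$ and $\sum_{n=1}^N t_n\,\phi(f^n x)=\sum_{k=1}^{M(N)}\phi(f^{a_k}x)$ one has, for every $N$,
$$
\frac1N\sum_{n=1}^N c_n\,\phi(f^n x)=\frac{2M(N)}{N}\,S_{M(N),\ba}\phi(x)-\frac1N\sum_{n=1}^N\phi(f^n x).
$$
Corollary~\ref{fcor1} (applicable because $\ba$ is u.b. in $\N$ and $f$ is $\ba$-MMA and $\ba$-MUEMLS) gives $S_{M,\ba}\phi(x)\to I$ as $M\to\infty$; since $M(N)\to\infty$ and $M(N)/N\to 1/2$, the first term on the right tends to $I$. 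So the whole statement reduces to the Birkhoff-type limit $\frac1N\sum_{n=1}^N\phi(f^n x)\to I$, which I will call $(\star)$.

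To obtain $(\star)$ I would run the same machinery with the \emph{identity sequence} $\mathbf n=(n)$ in place of $\ba$. First, $\mathbf n$ is itself u.b. in $\N$: for $0<\theta<1$ the sum $\frac1N\sum_{n=1}^N e^{2\pi i n\theta}$ is geometric with modulus at most $2/|e^{2\pi i\theta}-1|$, hence $O(1/N)\to 0$. Next, the unique-ergodicity clause of $\ba$-MUEMLS does not involve $\ba$, so every minimal subset of $f$ is uniquely ergodic; in the systems we actually work with (equicontinuous systems, Denjoy counter-examples via Corollary~\ref{den}, zero-entropy interval maps via Corollary~\ref{add}) these minimal sets are moreover MLS and $X$ decomposes into their ordinary basins of attraction with $x$ attracted to the same $K$, so $f$ is $\mathbf n$-MUEMLS and $\mathbf n$-MMA. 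Then Corollary~\ref{fcor1} applied to $\mathbf n$ yields precisely $(\star)$, and substituting back gives $\frac1N\sum_{n=1}^N c_n\,\phi(f^n x)\to 2\cdot\tfrac12\cdot I-I=0$ for all $\phi$ and $x$, i.e. $\bc$ is linearly disjoint with $f$.

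The one genuinely delicate step is $(\star)$: it asks for control of the \emph{full} orbit of a point of $Basin(K)$, while the standing hypotheses control only its $\ba$-orbit. On $K$ itself unique ergodicity handles it for free, so the content is at points of $Basin(K)\setminus K$, where one needs $x$ to be mean attracted to $K$ along the full orbit — exactly the strengthening from the $\ba$-versions of the structural hypotheses to their ordinary versions. This is the ingredient that, just as global unique ergodicity did in Theorem~\ref{ldt}, carries the argument through.
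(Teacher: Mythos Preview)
Your overall strategy---the forward implication via $\phi\equiv 1$, and the converse via the identity
\[
\frac1N\sum_{n=1}^N c_n\,\phi(f^nx)=\frac{2M(N)}{N}\,S_{M(N),\ba}\phi(x)-\frac1N\sum_{n=1}^N\phi(f^nx)
\]
together with Corollary~\ref{fcor1}---is exactly the paper's intended route: it states the corollary as an immediate consequence of Theorem~\ref{ldt} and Corollary~\ref{fcor1}, with no further argument.

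You are right to isolate $(\star)$ as the one substantive step, but your resolution of it is not a proof of the corollary as stated. You argue that the specific classes of examples in the paper (equicontinuous systems, Denjoy counter-examples, zero-entropy interval maps) happen also to be $\mathbf n$-MMA and $\mathbf n$-MUEMLS; that is true, but Corollary~\ref{ldc1} asserts the conclusion for \emph{any} $\ba$-MMA, $\ba$-MUEMLS system, and there is no general deduction of $\mathbf n$-MMA from $\ba$-MMA. On a minimal subset $K$ itself the unique-ergodicity clause of $\ba$-MUEMLS already delivers $(\star)$, so the only content is for $x\in Basin(K)\setminus K$, where one needs ordinary mean attraction to $K$, not merely $\ba$-mean attraction. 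The paper glosses over exactly this point: Theorem~\ref{ldt} assumes outright unique ergodicity of $f$, which supplies $(\star)$ directly, and the passage to the $\ba$-MMA/$\ba$-MUEMLS setting is left implicit. So your argument is faithful to the paper's and your diagnosis of the delicate step is accurate, but the justification you offer for $(\star)$ establishes only the special cases, not the general statement.
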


 \medskip
\begin{definition}~\label{ld}
Suppose $\bc$ is a sequence of complex numbers.  We say $\bc$ is an oscillating sequence if 
$$
\lim_{N\to \infty} \frac{1}{N} \sum_{n=1}^{N} c_{n} e^{2\pi i n \theta} =0, \quad \forall\;\; 0\leq \theta <1,
$$
 with constants $\lambda >1$ and $C>0$ satisfying the control condition
 $$
\frac{1}{N} \sum_{n=1}^{N} |c_{n}|^{\lambda} \leq C, \quad \forall\;\; N\geq 1.
$$
\end{definition}

Next we connect oscillating sequences taking values in $\{-1, 1\}$ with u.b. in $\N$ sequences and use this connection to give more examples of u.b. in $\mathbb{N}$ sequences.

\medskip
\begin{theorem}~\label{osubs}
Let $\bc$ be the sequence defined by the indicator $(t_{n})$ for $A=\ba=(a_{n})$. Then $\bc$ is an oscillating sequence if and only if $\ba$ is u.b. in $\N$ and $D(A)=1/2$.
\end{theorem}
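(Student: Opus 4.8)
The plan is to unwind both notions into statements about exponential sums and then to reconcile the two normalizations $\frac1N\sum_{n=1}^N$ (running over all of $\N$) and $\frac1M\sum_{k=1}^M$ (running over the listed elements of $A$). First note that the control condition in Definition~\ref{ld} costs nothing here: since $c_n=2t_n-1\in\{-1,1\}$, we have $|c_n|^\lambda=1$, so $\frac1N\sum_{n=1}^N|c_n|^\lambda=1\le 1$ for every $\lambda>1$. Hence $\bc$ is oscillating if and only if $\frac1N\sum_{n=1}^N c_n e^{2\pi i n\theta}\to 0$ for all $0\le\theta<1$. Writing $c_n=2t_n-1$,
$$
\frac1N\sum_{n=1}^N c_n e^{2\pi i n\theta}=\frac2N\sum_{n=1}^N t_n e^{2\pi i n\theta}-\frac1N\sum_{n=1}^N e^{2\pi i n\theta}.
$$
For $\theta=0$ this is $\frac{2\#(A\cap[1,N])}{N}-1$, which tends to $0$ exactly when $D(A)$ exists and equals $1/2$; so that case is equivalent to the density hypothesis. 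For $\theta\ne 0$ the geometric sum satisfies $\bigl|\sum_{n=1}^N e^{2\pi i n\theta}\bigr|\le 1/|\sin\pi\theta|$, so $\frac1N\sum_{n=1}^N e^{2\pi i n\theta}\to 0$, and the whole expression tends to $0$ if and only if $\frac1N\sum_{n=1}^N t_n e^{2\pi i n\theta}\to 0$.

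The bridge is the elementary identity, valid with $M=M(N):=\#(A\cap[1,N])$ because $A=(a_k)$ is listed increasingly,
$$
\frac1N\sum_{n=1}^N t_n e^{2\pi i n\theta}=\frac{M(N)}{N}\cdot\frac{1}{M(N)}\sum_{k=1}^{M(N)} e^{2\pi i a_k\theta}.
$$
For the implication ``$\ba$ u.b. in $\N$ and $D(A)=1/2$ $\Rightarrow$ $\bc$ oscillating'': fix $\theta\ne 0$; since $D(A)=1/2>0$ the set $A$ is infinite, so $M(N)\to\infty$ and $M(N)/N\to 1/2$ as $N\to\infty$, while $\frac1{M(N)}\sum_{k=1}^{M(N)}e^{2\pi i a_k\theta}\to 0$ because the limit defining u.b.\ in $\N$ holds as $M\to\infty$ and therefore also along the subsequence $M=M(N)$. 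Hence $\frac1N\sum_{n=1}^N t_n e^{2\pi i n\theta}\to \frac12\cdot 0=0$, and together with the $\theta=0$ case this shows $\bc$ is oscillating.

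For the converse, suppose $\bc$ is oscillating. The $\theta=0$ case gives $D(A)=1/2$ as above. Fix $0<\theta<1$. Evaluating $\frac1N\sum_{n=1}^N t_n e^{2\pi i n\theta}\to 0$ along the subsequence $N=a_M$ (for which $M(a_M)=M$) and using the identity gives $\frac1{a_M}\sum_{k=1}^M e^{2\pi i a_k\theta}\to 0$ as $M\to\infty$. Since $\frac{M}{a_M}=\frac{\#(A\cap[1,a_M])}{a_M}\to D(A)=1/2$, we have $\frac{a_M}{M}\to 2$, so
$$
\frac1M\sum_{k=1}^M e^{2\pi i a_k\theta}=\frac{a_M}{M}\cdot\frac1{a_M}\sum_{k=1}^M e^{2\pi i a_k\theta}\longrightarrow 2\cdot 0=0,
$$
which is precisely the assertion that $\ba$ is u.b.\ in $\N$. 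The only place requiring genuine care — and hence the main obstacle — is this passage between the index running over all of $\N$ and the index counting elements of $A$: one must use $D(A)=1/2>0$ so that $a_M/M$ stays bounded, and one must check that the relevant limits survive restriction to the subsequences $M=M(N)$ (in one direction) and $N=a_M$ (in the other). Everything else reduces to the splitting $c_n=2t_n-1$ and the standard bound on partial geometric sums.
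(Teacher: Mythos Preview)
Your proof is correct and follows essentially the same route as the paper's: both hinge on the splitting $c_n=2t_n-1$, the geometric-sum estimate for $\theta\ne 0$, and the index-change identity linking $\sum_{n\le N} t_n e^{2\pi i n\theta}$ to $\sum_{k\le M} e^{2\pi i a_k\theta}$. Your write-up is in fact a touch more careful than the paper's in the direction ``u.b.\ in $\N$ and $D(A)=1/2\Rightarrow\bc$ oscillating'': by using $M(N)=\#(A\cap[1,N])$ you obtain the conclusion for every $N$, whereas the paper's identity (written with $a_N$ on the right) literally only gives convergence along the subsequence $N=a_M$ and relies tacitly on $t_n=0$ between consecutive $a_k$'s to pass to general $N$.
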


\begin{proof}
Suppose $\bc$ is an oscillating sequence. Since 
$$
\lim_{N\to \infty} \frac{1}{N} \sum_{n=1}^{N} c_{n} =0,
$$
the density of $A$ is
$$
D(A) =\lim_{N\to \infty} \frac{\#(A\cap [1,N])}{N} =\frac{1}{2}.
$$
Thus we have a constant $\epsilon>0$ such that $N/a_{N} \geq \epsilon$ for all $N\geq 1$. This implies that 
$$
\frac{a_{N}}{N} \leq C=\frac{1}{\epsilon}, \quad \forall\;\; N\geq 1.
$$
For every $0<\theta<1$, 
$$
\frac{1}{N}\sum_{n=1}^{N} e^{2\pi i a_{n}\theta} =  \frac{a_{N}}{N} \frac{1}{a_{N}} \sum_{n=1}^{a_{N}} t_{n} e^{2\pi i n\theta} 
=  \frac{a_{N}}{N}  \frac{1}{a_{N}}\sum_{n=1}^{a_{N}} \frac{c_{n}+1}{2} e^{2\pi i n\theta}
$$
$$
= \frac{a_{N}}{2N}  \frac{1}{a_{N}}\sum_{n=1}^{a_{N}} c_{n} e^{2\pi i n\theta} +  \frac{a_{N}}{2N}   \frac{1}{a_{N}}\sum_{n=1}^{a_{N}} e^{2\pi i n\theta} 
$$
Since $\bc$ is an oscillating sequence, we have that
$$
\lim_{N\to \infty} \frac{1}{a_{N}}\sum_{n=1}^{a_{N}} c_{n} e^{2\pi i n\theta}=0.
$$
We also know
$$
\lim_{N\to \infty} \frac{1}{a_{N}}\sum_{n=1}^{a_{N}} e^{2\pi i n\theta} =0. 
$$
Since $a_{N}/(2N) \leq C/2$ for all $N$, we get 
$$
\lim_{N\to \infty} \frac{1}{N}\sum_{n=1}^{N} e^{2\pi i a_{n}\theta} =0.
$$
Thus $\ba$ is u.b in $\N$.

On the other hand, if $\ba$ is u.b. in $\N$, then the above calculation shows that for every $0<\theta<1$, 
$$
\lim_{N\to \infty}  \frac{1}{N}\sum_{n=1}^{N} c_{n} e^{2\pi i n\theta}=0.
$$
And $D(A)=1/2$ implies that for $\theta=0$, 
$$
\lim_{N\to \infty}  \frac{1}{N}\sum_{n=1}^{N} c_{n} e^{2\pi i n 0}=\lim_{N\to \infty}  \frac{1}{N}\sum_{n=1}^{N} c_{n}=0.
$$
This says that $\bc$ is an oscillating sequence.
\end{proof}

From Theorem~\ref{osubs}, we can find several more examples of u.b. in $\mathbb{N}$ sequences with $1/2$ density.  The first one is derived from
the Thue-Morse sequence,
$$
\omega_{tm}=01101001\cdots=t_{0}t_{1}t_{2}t_{3}t_{4}t_{5}t_{6}t_{7}\cdots \in \Sigma_{2} =\prod_{n=1}^{\infty} \{0,1\},
$$
which can be defined as follows. Let $s(n)$ be the number of $1$'s in the binary expansion of $n$. Then $t(n) = 0$ if $s(n)$ is even and $t(n)=1$ if $s(n)$ is odd. 
Let $TM=\{ n \;|\; t(n)=1\}$. Then $t(n)$ is the indicator of $TM$. From Gelfond's 1968 paper~\cite{Gel}, we know that for ${\bf{tm}}=(2t(n)-1) $ 
$$
\sup_{0\leq \theta<1} |\sum_{n=1}^{N} (2t(n)-1) e^{2\pi i n\theta} | =O(N^{\frac{\log 3}{\log 4}}).
$$
Thus ${\bf{tm}}$ is an oscillating sequence, and Theorem~\ref{osubs} tells us that $TM$ is an example of a u.b. in $\N$ sequence.
\medskip
\begin{corollary}~\label{tms}
List $TM =(tm_{n})$ as an increasing subsequence of $\N$. Then it is u.b. in $\N$.
\end{corollary}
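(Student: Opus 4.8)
The plan is to recognize ${\bf{tm}}=(2t(n)-1)$ as an oscillating sequence and then invoke Theorem~\ref{osubs}, which identifies oscillating $\pm 1$ indicator sequences with u.b. in $\N$ subsequences of density $1/2$.

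First I would dispose of the control condition in the definition of an oscillating sequence: since every term $2t(n)-1$ lies in $\{-1,1\}$, we have $|2t(n)-1|^{\lambda}=1$ for all $n$ and all $\lambda$, so with $\lambda=2$ and $C=1$ the bound $\frac{1}{N}\sum_{n=1}^{N}|2t(n)-1|^{\lambda}=1\le C$ holds for every $N\ge 1$.

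Next I would verify the oscillation condition $\lim_{N\to\infty}\frac{1}{N}\sum_{n=1}^{N}(2t(n)-1)e^{2\pi i n\theta}=0$ for every $0\le\theta<1$. This follows at once from Gelfond's estimate $\sup_{0\le\theta<1}\bigl|\sum_{n=1}^{N}(2t(n)-1)e^{2\pi i n\theta}\bigr|=O\bigl(N^{\log 3/\log 4}\bigr)$ in~\cite{Gel}: since $\log 3/\log 4<1$, dividing by $N$ and letting $N\to\infty$ sends the average to $0$, uniformly in $\theta$. (The special case $\theta=0$ records that $D(TM)=1/2$.) Thus ${\bf{tm}}$ is an oscillating sequence, and applying Theorem~\ref{osubs} with $A=\ba=(tm_n)$ and $\bc={\bf{tm}}$ gives that $\ba$ is u.b. in $\N$, together with $D(TM)=1/2$ --- which is exactly the assertion of the corollary.

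The only ingredient that is not routine bookkeeping is Gelfond's exponential-sum bound; once it is in hand there is no genuine obstacle, the argument being essentially a line-by-line match against the definitions plus the citation of Theorem~\ref{osubs}. Should one wish to avoid that citation, one could instead expand $\frac{1}{N}\sum_{n=1}^{N}e^{2\pi i tm_n\theta}=\frac{a_N}{N}\cdot\frac{1}{a_N}\sum_{n=1}^{a_N}t_n e^{2\pi i n\theta}$ and combine $a_N/N\to 2$, Gelfond's bound, and $\frac{1}{a_N}\sum_{n\le a_N}e^{2\pi i n\theta}\to 0$; but this merely reproduces the proof of Theorem~\ref{osubs} in this particular case.
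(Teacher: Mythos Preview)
Your proof is correct and follows exactly the same approach as the paper: use Gelfond's bound $\sup_{0\le\theta<1}\bigl|\sum_{n=1}^{N}(2t(n)-1)e^{2\pi i n\theta}\bigr|=O(N^{\log 3/\log 4})$ to conclude that ${\bf tm}$ is oscillating, then apply Theorem~\ref{osubs}. Your write-up is slightly more explicit (verifying the control condition and noting $D(TM)=1/2$), but the route is identical.
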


Let $f_{11}(n)$ denote the number of times the pattern $11$ appears in the binary expansion of $n$. Define $r(n) =0$ if $f_{11}(n)$ is odd and $r(n) =1$ if $f_{11}(n)$ is even. Then the sequence 
$$
\omega_{rs}=r(0) r(1) \cdots\in \Sigma_{2} 
$$ 
is called the Rudin-Shapiro sequence.  From the book~\cite[78-79]{AS}, we know that for ${\bf{rs}} =(2r(n)-1)$, we have  
$$
\sup_{0\leq \theta<1} |\sum_{n=1}^{N} (2r(n)-1) e^{2\pi i n\theta} | \leq C\sqrt{N}.
$$
Thus ${\bf{rs}}$ is an oscillating sequence, and Theorem~\ref{osubs} again implies:

\medskip
\begin{corollary}~\label{rss}
List $RS =(rs_{n})$ as an increasing subsequence of $\N$. Then it is u.b. in $\N$.
\end{corollary}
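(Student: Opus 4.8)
The plan is to mirror exactly the argument used for the Thue--Morse subsequence in Corollary~\ref{tms}, feeding the classical Rudin--Shapiro exponential-sum estimate into Theorem~\ref{osubs}. Write ${\bf rs} = (c_n)$ with $c_n = 2r(n)-1 \in \{-1,1\}$, so that $(c_n)$ is precisely the $\pm1$-sequence associated by the indicator $(r(n))$ to the set $A = RS = \{\, n \mid f_{11}(n) \text{ even}\,\}$, and let $(rs_n)$ be the increasing enumeration of $A$. First I would observe that the control condition in the definition of an oscillating sequence is automatic here: since $|c_n| = 1$ for every $n$, we have $\frac1N\sum_{n=1}^N |c_n|^\lambda = 1$ for all $N$, so one may take $\lambda = 2$ and $C = 1$.

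Next I would invoke the square-root cancellation bound for Rudin--Shapiro sums quoted from~\cite[78--79]{AS},
$$
\sup_{0\le\theta<1}\Big|\sum_{n=1}^N (2r(n)-1)\,e^{2\pi i n\theta}\Big| \le C\sqrt N ,
$$
which is uniform in $\theta$. Dividing by $N$ gives $\big|\frac1N\sum_{n=1}^N c_n e^{2\pi i n\theta}\big| \le C/\sqrt N \to 0$ for every $\theta \in [0,1)$; in particular this covers $\theta = 0$, so $\frac1N\sum_{n=1}^N c_n \to 0$ as well. Hence $(c_n) = {\bf rs}$ meets both requirements of the definition and is an oscillating sequence. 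Applying Theorem~\ref{osubs} with $t_n = r(n)$ and $A = RS$, the oscillating property of ${\bf rs}$ yields simultaneously that $RS = (rs_n)$ is u.b.\ in $\N$ and that $D(RS) = 1/2$, which is the assertion of the corollary.

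There is no genuine obstacle in this argument: all the analytic content is packaged in the Rudin--Shapiro cancellation estimate (the same flatness phenomenon that makes the Rudin--Shapiro polynomials have small sup-norm), and the only things to verify by hand are the trivial $L^2$ control bound and the bookkeeping through Theorem~\ref{osubs}. The one point worth flagging is that it is essential for the cited estimate to be uniform in $\theta$ over all of $[0,1)$: both rational and irrational frequencies are needed, since Theorem~\ref{osubs}, via the Weyl-type criteria recalled in Section~2, requires (\ref{eq1}) for \emph{every} $0 < \theta < 1$, not merely for irrational $\theta$ as in the notion of r.d.\ in $\N$. (If one only had cancellation at irrational frequencies one would be in the situation of Theorem~\ref{sfex}, where the $\theta = 0$ average still converges but u.b.\ in $\N$ can fail.) Here the cited bound is stated with the full supremum over $[0,1)$, so this subtlety is taken care of automatically.
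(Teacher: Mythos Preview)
Your proposal is correct and follows essentially the same approach as the paper: cite the Rudin--Shapiro square-root cancellation bound from~\cite{AS} to conclude that ${\bf rs}=(2r(n)-1)$ is an oscillating sequence, and then invoke Theorem~\ref{osubs}. Your write-up simply makes explicit the trivial control-condition check and the uniformity-in-$\theta$ point that the paper leaves implicit.
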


Recall that $\Omega (n)$ is the counting function of prime factors of $n$ with multiplicity.  The Liouville function $\lambda (n)$ is defined as
$$
\lambda (n) =(-1)^{\Omega (n)}.
$$ 
It takes values in $\{-1,1\}$ and the M\"obius function $\mu(n)$ takes $\lambda (n)$ at $n\in \mathcal{SF}$ and $0$ at $n\not\in \mathcal{SF}$.
From Davenport's 1937 paper~\cite{Dev}, we know that ${\bf l}=(\lambda (n))$ as well as ${\bf m }=(\mu(n))$ is an oscillating sequence. 
Let 
$$
t_{n} =\frac{\lambda (n)+1}{2}.
$$
This is the indicator of $EF$, the set of natural numbers with an even number of prime factors. Theorem~\ref{osubs} gives us one more example.

\medskip
\begin{corollary}~\label{even}
List $EF =(ef_{n})$ as an increasing subsequence of $\N$. Then it is u.b. in $\N$.
\end{corollary}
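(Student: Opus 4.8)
The plan is to obtain Corollary~\ref{even} as an immediate application of Theorem~\ref{osubs}, in exactly the same way Corollaries~\ref{tms} and~\ref{rss} were deduced. The first step is the elementary bookkeeping that identifies the objects involved. Since $\lambda(n)=(-1)^{\Omega(n)}$ takes only the values $\pm 1$, one has $\lambda(n)=1$ if and only if $\Omega(n)$ is even, i.e. if and only if $n\in EF$; hence $t_n=\frac{\lambda(n)+1}{2}$ is precisely the indicator sequence of $EF$, and the associated two-valued sequence is $c_n=2t_n-1=\lambda(n)$. Thus, writing $A=EF$ and $\ba=(ef_n)$ for the increasing enumeration of $EF$, the sequence $\bc=(c_n)$ attached to $A$ by the recipe of Section~6 coincides with the Liouville sequence ${\bf l}=(\lambda(n))$.

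Second, I would verify that ${\bf l}$ is an oscillating sequence in the sense of Definition~\ref{ld}. The control condition is trivial: $|\lambda(n)|=1$ for all $n$, so $\frac{1}{N}\sum_{n=1}^{N}|\lambda(n)|^{2}=1$ for every $N$, and one may take the exponent in Definition~\ref{ld} equal to $2$ and $C=1$. For the oscillation condition, Davenport's 1937 estimate~\cite{Dev} gives $\sup_{0\le\theta<1}\bigl|\sum_{n=1}^{N}\lambda(n)e^{2\pi i n\theta}\bigr|=o(N)$ (in fact uniformly in $\theta$, saving an arbitrary power of $\log N$); the case $\theta=0$ is the statement $\frac{1}{N}\sum_{n\le N}\lambda(n)\to 0$, equivalent to the prime number theorem. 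Consequently $\lim_{N\to\infty}\frac{1}{N}\sum_{n=1}^{N}\lambda(n)e^{2\pi i n\theta}=0$ for every $0\le\theta<1$, so ${\bf l}$ is oscillating. This is exactly the assertion about ${\bf l}$ already quoted from~\cite{Dev} just before the corollary, so in the write-up I would simply cite it.

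Third, I would apply the ``only if'' direction of Theorem~\ref{osubs} --- oscillating implies u.b. in $\N$ with density $1/2$ --- to $A=EF$, $\ba=(ef_n)$, indicator $(t_n)$, and $\bc={\bf l}$. Since $\bc$ is oscillating, the theorem yields simultaneously that $D(EF)=\frac{1}{2}$ and that $\ba=(ef_n)$ is uniformly behaved in $\N$; the latter is the conclusion of the corollary, and the former confirms that this example, like the Thue--Morse and Rudin--Shapiro ones, sits among the density-$1/2$ subsequences of $\N$.

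Within the framework of this paper there is no real obstacle: the entire argument collapses to a single invocation of Theorem~\ref{osubs} once the identity $c_n=\lambda(n)$ is recognised. All the genuine difficulty is packaged inside the cited fact that ${\bf l}$ is oscillating, i.e. Davenport's bound for exponentially twisted Liouville sums, which in turn rests on the prime number theorem together with a classical zero-free region for the Riemann zeta function. So the honest answer to ``what is the hard part'' is that the hard part is Davenport's theorem, used here as a black box; if one insisted on a self-contained proof, reproving that uniform estimate would be the only substantial step.
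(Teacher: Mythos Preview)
Your proposal is correct and follows exactly the paper's own argument: identify $c_n=2t_n-1=\lambda(n)$, invoke Davenport's result that ${\bf l}=(\lambda(n))$ is an oscillating sequence, and apply Theorem~\ref{osubs}. The paper states this in one line without spelling out the control condition or the $\theta=0$ case, but your added detail is harmless and the approach is identical.
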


Similarly,  
$$
\widetilde{t}_{n} =\frac{1-\lambda (n)}{2}
$$
is the indicator of $OF$, the set of natural numbers with odd numbers of prime factors. Theorem~\ref{osubs} gives us that 

\medskip
\begin{corollary}~\label{odd}
List $OF =(of_{n})$ as an increasing subsequence of $\N$. Then it is u.b. in $\N$.
\end{corollary}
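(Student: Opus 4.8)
The plan is to invoke Theorem~\ref{osubs} exactly as in the proof of Corollary~\ref{even}, thereby reducing the assertion to the statement that the $\{-1,1\}$-valued sequence attached to $OF$ is oscillating. First I would identify that sequence explicitly. Since $\widetilde{t}_n = (1-\lambda(n))/2$ is the indicator of $OF$, the associated sequence $\widetilde{\bc} = (2\widetilde{t}_n - 1)$ satisfies $2\widetilde{t}_n - 1 = -\lambda(n)$, so $\widetilde{\bc}$ is simply $(-\lambda(n))$, the negative of the Liouville sequence.

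Next I would observe that multiplying an oscillating sequence by the constant $-1$ preserves the oscillating property: the vanishing of $\frac{1}{N}\sum_{n=1}^{N}(-\lambda(n))e^{2\pi i n\theta}$ for every $0\le\theta<1$ is immediate from the vanishing of $\frac{1}{N}\sum_{n=1}^{N}\lambda(n)e^{2\pi i n\theta}$, and the control condition $\frac{1}{N}\sum_{n=1}^{N}|-\lambda(n)|^{\lambda}\le C$ coincides with the one for ${\bf l}=(\lambda(n))$. By Davenport's 1937 result~\cite{Dev}, already quoted above, ${\bf l}$ is an oscillating sequence; hence $\widetilde{\bc}=(-\lambda(n))$ is an oscillating sequence as well.

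Finally, applying Theorem~\ref{osubs} with $A = OF$ yields at once that $OF$ is u.b. in $\N$, and as a byproduct that $D(OF)=1/2$. There is essentially no obstacle here: the only point requiring a moment's care is the bookkeeping identity $2\widetilde{t}_n - 1 = -\lambda(n)$ together with the (trivial) remark that negation preserves the oscillating property, after which the conclusion is a direct application of the machinery already in place. Alternatively, one may use $OF = \N\setminus EF$, so that the indicator of $OF$ is $1 - t_n$ with $t_n$ the indicator of $EF$, whence the attached $\{-1,1\}$-sequence equals $-(2t_n - 1)$; since Corollary~\ref{even} shows $(2t_n-1)$ is oscillating, the same conclusion follows.
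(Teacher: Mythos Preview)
Your proposal is correct and follows exactly the approach the paper takes: identify the indicator $\widetilde{t}_n=(1-\lambda(n))/2$ of $OF$, note that $2\widetilde{t}_n-1=-\lambda(n)$ is oscillating by Davenport, and apply Theorem~\ref{osubs}. The paper merely states ``Theorem~\ref{osubs} gives us that'' without spelling out the details, so your write-up is in fact more explicit than the original.
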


The M\"obius sequence ${\bf m}=(\mu (n))$ is an oscillating sequence which takes values in $\{-1, 0, 1\}$ instead of $\{-1,1\}$. So we cannot apply Theorem~\ref{osubs} on $EF\cap \mathcal{SF}$ and $OF\cap \mathcal{SF}$. Actually, they are r.d. in $\N$ but not u.d. in $\Z$. Thus, they are not u.b. in $\N$ which we will now prove:

 \medskip
\begin{theorem}~\label{sfa}
List $EF\cap \mathcal{SF} =(a_{n})$ and $OF\cap \mathcal{SF} =(b_{n})$ as increasing subsequences of $\N$. Then, both of them are r.d. in $\N$ but not u.d. in $\Z$ and thus not u.b. in $\N$. 
\end{theorem}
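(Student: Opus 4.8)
The plan is to treat the three assertions in turn—rotationally distributed in $\N$, not uniformly distributed in $\Z$, and hence not uniformly behaved in $\N$—carrying out the argument for $EF\cap\mathcal{SF}=(a_n)$ in full and noting that $OF\cap\mathcal{SF}=(b_n)$ is entirely symmetric. First I would record the bookkeeping identities. On square-free integers $\mu$ agrees with the Liouville function, so the indicator of $EF\cap\mathcal{SF}$ is $\tfrac12(\mu^2(n)+\mu(n))$ and that of $OF\cap\mathcal{SF}$ is $\tfrac12(\mu^2(n)-\mu(n))$. Summing over $n\le N$ and using $\sum_{n\le N}\mu^2(n)\sim\tfrac{6}{\pi^2}N$ together with $\sum_{n\le N}\mu(n)=o(N)$ (a weak form of the prime number theorem, cf.\ (\ref{zero2})), both sets have density $\tfrac{3}{\pi^2}>0$; in particular $a_N/N$ and $b_N/N$ converge to the finite constant $\tfrac{\pi^2}{3}$, which is exactly what licenses the passage between averages indexed by $n\le N$ and averages over values $m\le a_N$.

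For r.d.\ in $\N$: by the Weyl criterion quoted above and the fact that $\tfrac1N\sum_{n\le N}e^{2\pi i h a_n\theta}$ depends only on $h\theta$ modulo $1$, with $h\theta$ irrational whenever $h\ne 0$ and $\theta$ is irrational, it suffices to prove $\tfrac1N\sum_{n=1}^N e^{2\pi i a_n\theta}\to 0$ for every irrational $0<\theta<1$. I would then write
\[
\frac1N\sum_{n=1}^N e^{2\pi i a_n\theta}=\frac{a_N}{N}\left(\frac{1}{2a_N}\sum_{m\le a_N}\mu^2(m)e^{2\pi i m\theta}+\frac{1}{2a_N}\sum_{m\le a_N}\mu(m)e^{2\pi i m\theta}\right).
\]
The first inner average tends to $0$ by the computation inside the proof of Theorem~\ref{sfex} (which shows precisely that $\tfrac1N\sum_{n\le N}\mu^2(n)e^{2\pi in\theta}\to 0$ for irrational $\theta$), and the second tends to $0$ by Davenport's estimate $\sum_{n\le N}\mu(n)e^{2\pi in\theta}=o(N)$, uniformly in $\theta$~\cite{Dev}; since $a_N/N$ is bounded, the whole expression tends to $0$. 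Thus $(a_n\theta)$ is u.d.1 for all irrational $\theta$, i.e.\ $EF\cap\mathcal{SF}$ is r.d.\ in $\N$, and replacing $+\mu$ by $-\mu$ gives the same for $OF\cap\mathcal{SF}$.

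For the negative assertions: every element of $EF\cap\mathcal{SF}$ (resp.\ $OF\cap\mathcal{SF}$) is square-free, hence not divisible by $4=2^2$, so taking $m=4$ and $r=0$ in Definition~\ref{uds3}, the set $\{1\le n\le N:\ a_n\equiv 0\pmod 4\}$ is empty for every $N$ and its density is $0\ne\tfrac14$; therefore neither sequence is u.d.\ in $\Z$. Finally, if either were u.b.\ in $\N$ then (\ref{eq1}) would hold for all $\theta\in(0,1)$, in particular for all rational $\theta$, which by the theorem quoted from~\cite{KN} (Chapter 5, Corollary 1.1) would force u.d.\ in $\Z$—a contradiction. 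Hence both sequences are r.d.\ in $\N$ but not u.d.\ in $\Z$, and in particular not u.b.\ in $\N$.

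The only substantive work lies in the r.d.\ part, and there every analytic input—cancellation of $\mu^2$-twisted exponential sums at irrational frequencies, and cancellation of $\mu$-twisted exponential sums—is already available (the former from the proof of Theorem~\ref{sfex}, the latter from~\cite{Dev}). I therefore expect the main obstacle to be purely organizational: verifying that the index-to-value conversion is legitimate (this uses positivity of the density, handled in the first step) and that the irrational-frequency hypothesis is preserved under multiplication by the integer $h$ supplied by the Weyl criterion. No estimate beyond those cited in the excerpt should be needed.
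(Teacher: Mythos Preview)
Your proof is correct and, for the r.d.\ part, follows the same route as the paper: decompose the indicator of $EF\cap\mathcal{SF}$ as $\tfrac12(\mu^2+\mu)$, convert the index-average to a value-average via $a_N/N\to\pi^2/3$, and then kill the $\mu^2$-twisted sum by the computation in Theorem~\ref{sfex} and the $\mu$-twisted sum by Davenport's oscillation estimate.

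The one genuine difference is in the ``not u.d.\ in $\Z$'' step. The paper continues the exponential-sum computation and shows that for rational $\theta$ the limit $\tfrac1N\sum_{n\le N}e^{2\pi i a_n\theta}$ equals a nonzero constant times $\lim_N\tfrac1{a_N}\sum_{m\le a_N}\mu^2(m)e^{2\pi i m\theta}$, which by Theorem~\ref{sfex} fails to vanish at $\theta=1/m^2$. You instead give a one-line combinatorial argument: square-free integers are never $\equiv 0\pmod 4$, so the residue class $r=0$ at modulus $m=4$ has density $0\ne\tfrac14$ and Definition~\ref{uds3} fails directly. Your route is shorter and more elementary here; the paper's route has the advantage of being a uniform continuation of the same computation (and of identifying exactly which rational frequencies obstruct u.d.\ in $\Z$), but both are valid. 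The final implication ``not u.d.\ in $\Z$ $\Rightarrow$ not u.b.\ in $\N$'' is handled identically via the Weyl-type criterion for u.d.\ in $\Z$.
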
 

\begin{proof}
Let $t_{n}=0$ for $n \in \N\setminus \mathcal{SF}$ and 
$$
t_{n} =\frac{1+\mu (n)}{2}, \quad n\in \mathcal{SF}.
$$
Then $t_n$ is the indicator of $EF\cap \mathcal{SF}$.

For any $0<\theta<1$, 
$$
\lim_{N\to \infty} \frac{1}{N} \sum_{n=1}^{N} e^{2\pi i a_{n}\theta}  = \lim_{N\to \infty} \frac{1}{N} \sum_{1\leq n\in \mathcal{SF}\leq a_{N}} t_{n} e^{2\pi i n \theta} 
$$
$$
= \lim_{N\to \infty} \frac{1}{N} \sum_{1\leq n\in \mathcal{SF}\leq a_{N}} \frac{1+\mu (n)}{2} e^{2\pi i n \theta} 
$$
$$
= \lim_{N\to \infty} \frac{a_{N}}{2N} \frac{1}{a_{N}} \sum_{1\leq n\in \mathcal{SF}\leq a_{N}} \mu (n) e^{2\pi i n \theta} 
+ \lim_{N\to \infty}  \frac{a_{N}}{2N} \frac{1}{a_{N}} \sum_{1\leq n\in \mathcal{SF}\leq a_{N}}  e^{2\pi i n \theta} 
$$
 $$
= \Big(\lim_{N\to \infty} \frac{a_{N}}{2N}\Big) \Big(\lim_{N\to \infty} \frac{1}{a_{N}} \sum_{n=1}^{a_{N}} \mu (n) e^{2\pi i n \theta} \Big)
+ \Big(\lim_{N\to \infty}  \frac{a_{N}}{2N}\Big) \Big(\lim_{N\to \infty} \frac{1}{a_{N}} \sum_{n=1}^{a_{N}} \mu^{2}(n) e^{2\pi i n \theta} \Big)
$$
$$
=  \frac{1}{2D(EF\cap {\mathcal{SF}})}  \lim_{N\to \infty} \frac{1}{a_{N}} \sum_{n=1}^{a_{N}} \mu^{2}(n) e^{2\pi i n \theta}
$$ 
since $\mu(n)$ is an oscillating sequence and since $D(EF\cap {\mathcal{SF}})\not=0$.
As we have seen in Theorem~\ref{sfex}, when $0<\theta<1$ is irrational,  
$$
\lim_{N\to \infty} \frac{1}{a_{N}} \sum_{n=1}^{a_{N}} \mu^{2}(n) e^{2\pi i n \theta}=0
$$
This implies that $EF\cap \mathcal{SF} =(a_{n})$ is r.d. in $\N$.

There are rational numbers $0<\theta<1$ such that 
$$
\lim_{N\to \infty} \frac{1}{a_{N}} \sum_{n=1}^{a_{N}} \mu^{2}(n) e^{2\pi i n \theta}\not=0
$$
as we have seen in Theorem~\ref{sfex}. This implies that $EF\cap \mathcal{SF} =(a_{n})$ is not u.d. in $\Z$.
Thus, $EF\cap \mathcal{SF} =(a_{n})$ is not u.b. in $\N$. \\

For $OF\cap \mathcal{SF} =(b_{n})$, we consider its indicator $t_{n}=0$ for $n \in \N\setminus \mathcal{SF}$ and 
$$
t_{n} =\frac{1-\mu (n)}{2}, \quad n\in \mathcal{SF}.
$$
The rest of the proof is the same as that for $EF\cap \mathcal{SF}$.

\end{proof}
%
%
%
%
%
%
%
%
%

%

\medskip
As a consequence of Theorem~\ref{ldt} and Corollaries~\ref{even} and~\ref{odd}. We have that 

\medskip
\begin{corollary}~\label{ldc2}
Suppose $f$ is $EF$- (or $OF$)-MMA and $EF$- (or $OF$)-MUEMLS. Then the Liouville function $(\lambda (n))$ is linearly disjoint with $f$. 
That is, then for any $\phi\in C(X)$ and any $x\in X$, 
$$
\lim_{N\to \infty} \frac{1}{N} \sum_{n=1}^{N} \lambda (n) \phi (f^{n}x) = 0.
$$
\end{corollary}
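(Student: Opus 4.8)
The plan is to read the statement off of Corollary~\ref{ldc1}, which is precisely the $\ba$-MMA/$\ba$-MUEMLS strengthening of Theorem~\ref{ldt}, applied to the sequences $EF$ and $OF$. The hypothesis splits into two alternatives: either $f$ is $EF$-MMA and $EF$-MUEMLS, in which case I take $\ba = (ef_n)$, or $f$ is $OF$-MMA and $OF$-MUEMLS, in which case I take $\ba = (of_n)$. In both cases $\ba$ is u.b. in $\N$: this is Corollary~\ref{even} for $EF$ and Corollary~\ref{odd} for $OF$. So the only things left to check before quoting Corollary~\ref{ldc1} are that the density hypothesis $D(A)=1/2$ holds and that the associated $\{-1,1\}$-valued sequence $\bc = (2t_n-1)$ is the Liouville function up to a global sign.

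For the identification: the indicator of $EF$ is $t_n = \frac{\lambda(n)+1}{2}$, so the attached sequence is $c_n = 2t_n - 1 = \lambda(n)$; the indicator of $OF$ is $\widetilde{t}_n = \frac{1-\lambda(n)}{2}$, so its attached sequence is $c_n = 2\widetilde{t}_n - 1 = -\lambda(n)$. Since $\frac1N\sum_{n=1}^N c_n\phi(f^n x)$ and $\frac1N\sum_{n=1}^N(-c_n)\phi(f^n x)$ differ only by a sign, linear disjointness of $(\lambda(n))$ with $f$ is equivalent to linear disjointness of $(-\lambda(n))$ with $f$; hence establishing the conclusion of Corollary~\ref{ldc1} in either alternative gives exactly the displayed limit. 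For the density: Davenport's theorem says ${\bf l}=(\lambda(n))$ is an oscillating sequence, so evaluating its oscillation condition at $\theta=0$ gives $\frac1N\sum_{n=1}^N\lambda(n)\to 0$; since $\frac1N\sum_{n=1}^N\lambda(n) = 2\,\#(EF\cap[1,N])/N - 1$, this yields $D(EF)=1/2$, and the same computation with $OF$ gives $D(OF)=1/2$. Equivalently, this half of the statement is already contained in the conclusion of Theorem~\ref{osubs}, whose proof that $EF$ and $OF$ are u.b. in $\N$ passes through $D(A)=1/2$.

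With $\ba$ u.b. in $\N$, $f$ being $\ba$-MMA and $\ba$-MUEMLS, and $D(A)=1/2$, Corollary~\ref{ldc1} gives that the corresponding $\bc$, namely $\pm(\lambda(n))$, is linearly disjoint with $f$, which is the assertion. I expect no genuine obstacle here: all of the analytic content sits in Corollary~\ref{ldc1} — which itself rests on Theorem~\ref{main1}/Corollary~\ref{fcor1} together with the decomposition of $X$ into basins of uniquely ergodic minimal pieces supplied by the $\ba$-MMA/$\ba$-MUEMLS hypotheses — and in Davenport's oscillation estimate for $\lambda$. The one step that deserves a word of care, and it is handled inside Theorem~\ref{ldt}/Corollary~\ref{ldc1} rather than here, is the passage from the sparse average $\frac1N\sum_{n=1}^N\phi(f^{a_n}x)$ taken along $\ba$ to the full average $\frac1N\sum_{n=1}^N\lambda(n)\phi(f^n x)$ taken along all of $\N$; this is exactly where the identity $c_n = 2t_n - 1$ and the value $D(EF)=1/2$ are used.
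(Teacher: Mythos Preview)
Your proposal is correct and follows essentially the same route as the paper, which simply records the corollary as a consequence of Theorem~\ref{ldt} together with Corollaries~\ref{even} and~\ref{odd}. Your choice to invoke Corollary~\ref{ldc1} rather than Theorem~\ref{ldt} directly is in fact the cleaner citation, since the hypothesis here is $\ba$-MMA/$\ba$-MUEMLS rather than unique ergodicity, and your explicit verification that $c_n=\pm\lambda(n)$ and $D(EF)=D(OF)=1/2$ fills in details the paper leaves implicit.
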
 

The linear disjointness of a general oscillating sequence with an MLS dynamical system has been studied in~\cite{FJ}. As a consequence of Theorem~\ref{ldt} and \cite[Theorem 1]{FJ}, we have that 

\medskip
\begin{corollary}~\label{ldc2}
Suppose $A$ is a subset of $\N$ and list $A=\ba=(a_{n})$ as an increasing subsequence of $\N$. Let $t_{n}$ be the indicator of $A$ and $c_{n}=2t_{n}-1$. Suppose $\bc=(c_{n})$ is an oscillating sequence. Then for any MAA and MLS and minimal uniquely ergodic dynamical system $f$, we have that for any $\phi \in C(X)$ and $x\in Basin (K)$, 
$$
\lim_{N\to \infty} \frac{1}{N} \sum_{n=1}^{N} \phi (f^{a_{n}}x) =\int_{K} \phi d\nu_{K},
$$  
where $\nu_{K}$ is the unique $f|K$ invariant probability measure. 
\end{corollary}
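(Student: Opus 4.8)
The plan is to localize the statement to each minimal subsystem of $f$ and then spread the convergence from a minimal set $K$ to its whole basin $Basin(K)$, in the spirit of \cite{FJ}; the only genuinely new ingredient is a comparison between Ces\`aro averages along $\N$ and along $\ba$, which is available because the oscillation hypothesis pins down the density of $A$. To begin, since $\bc=(c_{n})$ is an oscillating sequence, Theorem~\ref{osubs} gives that $\ba=(a_{n})$ is u.b.\ in $\N$ and that $A$ has density $D(A)=1/2$; in particular $\underline{D}(A)=1/2>0$.

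Next, fix a minimal subset $K\subseteq X$. Then $f|K$ is minimal and, by hypothesis, uniquely ergodic; it is also MLS, because the exceptional set $E$ in the definition of mean Lyapunov stability of $f$ does not depend on the chosen pair of points and hence works equally for pairs in $K$. One can now get the convergence along $\ba$ on $K$ in either of two ways. Following the attribution in the statement, apply \cite[Theorem 1]{FJ} to the minimal uniquely ergodic MLS system $f|K$ to see that the oscillating sequence $\bc$ is linearly disjoint with $f|K$, then feed this into Theorem~\ref{ldt} for the uniquely ergodic $f|K$, which returns both $D(A)=1/2$ and
$$
\lim_{N\to\infty}\frac1N\sum_{n=1}^{N}\phi(f^{a_{n}}x)=\int_{K}\phi\,d\nu_{K},\qquad \phi\in C(K),\ x\in K .
$$
Alternatively, and more directly: since $\underline{D}(\ba)>0$, Corollary~\ref{MSLaMSL} upgrades $f|K$ to an $\ba$-MLS system, so that Theorem~\ref{main1} applies verbatim to the minimal, uniquely ergodic, $\ba$-MLS system $f|K$ and yields the same identity. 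In either case the corollary holds for every $x\in K$.

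It then remains to pass from $K$ to $Basin(K)$. Let $x\in Basin(K)$ and $\epsilon>0$, and pick $z=z_{\epsilon,x}\in K$ with $\limsup_{M\to\infty}\frac1M\sum_{m=1}^{M}d(f^{m}x,f^{m}z)<\epsilon$. Writing $(t_{m})$ for the indicator of $A$, using that all summands are nonnegative together with $a_{N}/N\to 1/D(A)=2$, exactly as in Proposition~\ref{dad}, one obtains
$$
\limsup_{N\to\infty}\frac1N\sum_{n=1}^{N}d(f^{a_{n}}x,f^{a_{n}}z)\ \le\ \Big(\limsup_{N\to\infty}\tfrac{a_{N}}{N}\Big)\Big(\limsup_{N\to\infty}\tfrac1{a_{N}}\sum_{m=1}^{a_{N}}d(f^{m}x,f^{m}z)\Big)\ \le\ 2\epsilon ,
$$
so that $x$ is in fact $\ba$-mean attracted to $K$. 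Splitting $\{1,\dots,N\}$ according to whether $d(f^{a_{n}}x,f^{a_{n}}z)$ lies below a modulus-of-continuity threshold $\delta$ and bounding the complementary set by $\delta^{-1}\sum_{n\le N}d(f^{a_{n}}x,f^{a_{n}}z)$, uniform continuity of $\phi$ makes $\limsup_{N\to\infty}|S_{N,\ba}\phi(x)-S_{N,\ba}\phi(z)|$ as small as desired; combining this with $S_{N,\ba}\phi(z)\to\int_{K}\phi\,d\nu_{K}$ from the previous step and letting $\epsilon\to 0$ gives $S_{N,\ba}\phi(x)\to\int_{K}\phi\,d\nu_{K}$. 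Since $f$ is MAA we have $X=\bigcup_{K}Basin(K)$, and the corollary follows for all $x\in X$.

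The step I expect to be the main obstacle is this last passage to the basin. Two points must be watched: first, that ordinary ($\mathbf{n}$-) mean attraction to $K$ genuinely upgrades to $\ba$-mean attraction — which is where the exact value $D(A)=1/2$, not merely the u.b.\ in $\N$ property of $\ba$, enters, via the Ces\`aro comparison of Proposition~\ref{dad}; and second, that the ``same as in \cite{FJ}'' equicontinuity-type argument survives the reindexing $n\mapsto a_{n}$. If one instead prefers to argue straight from linear disjointness, i.e.\ from $\frac1N\sum_{n\le N}c_{n}\phi(f^{n}x)\to 0$ and $\frac1N\sum_{n\le N}\phi(f^{n}x)\to\int_{K}\phi\,d\nu_{K}$ for $x\in Basin(K)$, then the bookkeeping relating these to $\frac1M\sum_{j\le M}\phi(f^{a_{j}}x)$ through the normalizing ratio $\#(A\cap[1,N])/N\to 1/2$ must be carried out carefully so that the limiting constant comes out $1$ rather than $1/2$.
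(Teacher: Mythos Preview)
Your argument is correct. The paper offers no proof beyond the one-line attribution to Theorem~\ref{ldt} and \cite[Theorem~1]{FJ}, and your Route~A is exactly that combination, spelled out. Your Route~B---deducing that $\ba$ is u.b.\ in $\N$ via Theorem~\ref{osubs}, upgrading MLS to $\ba$-MLS on each minimal $K$ via Corollary~\ref{MSLaMSL} (using $\underline{D}(\ba)=1/2>0$), and then invoking Theorem~\ref{main1}---is a genuinely different path that keeps the entire proof internal to the present paper rather than importing the linear-disjointness machinery from \cite{FJ}; it has the added virtue of exhibiting this corollary as a special case of the main theorem here. Your basin-extension step (upgrading $\mathbf{n}$-mean attraction to $\ba$-mean attraction via $a_{N}/N\to 2$) is correct but slightly more than strictly needed for the paper's route: since \cite[Theorem~1]{FJ} already supplies $\frac{1}{M}\sum_{m\le M}c_{m}\phi(f^{m}x)\to 0$ for \emph{every} $x\in X$, one can combine the identity displayed just before Theorem~\ref{ldt} directly with the ordinary (un-reindexed) Birkhoff convergence $\frac{1}{M}\sum_{m\le M}\phi(f^{m}x)\to\int_{K}\phi\,d\nu_{K}$ on $Basin(K)$ and read off the result without ever passing to $\ba$-mean attraction. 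Your version trades that shortcut for self-containment, which is a reasonable exchange.
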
 


\medskip
\begin{remark} 
 Combining results in~\cite{JPro,JPre}, we can have similar results to Corollary~\ref{ldc2} for distal affine $d$-torus maps and all $d$-torus maps with zero topological entropy when $\bc$ is an oscillating sequence of order $d$ and an oscillating sequence of order $d$ in arithmetic sense, respectively, as defined in~\cite{JPro}. For example, as a result of higher Gowers uniformity for the Thue-Morse sequence and the Rudin-Shapiro sequence (see~\cite{A1}), we know that they are not only oscillating sequences but also oscillating sequences of order $d$ for all $d\geq 2$ as defined in~\cite{JPro}. From Hua's work (see~\cite{Hua}), we know the Liouville function (as well as the M\"obius function) is not only oscillating sequences but also oscillating sequences of order $d$ in arithmetic sense for all $d\geq 2$ as defined in~\cite{JPro}.  
\end{remark}

\medskip
\begin{remark} 
An exciting research problem for us now is finding more examples or counter-examples of u.b. in $\N$ sequences, in particular, whose densities are not $1/2$ or which are not subsequences of $\N$,  in number theory and their applications to number theory and dynamical systems/ergodic theory. In ergodic theory, many sequences in $\N$ have been studied but are not u.b. in $\N$. For example, consider a polynomial $P(x)$ with non-negative integers as coefficients. Then we know that $\{a_{n} = P(n)\}$ is r.d. in $\N$ but not u.d. in $\Z$ if the degree of $P$ is greater than $1$. Based on our work in this paper (see Corollaries~\ref{tms} and~\ref{rss}), a way to find more u.b. in $\N$ sequences whose densities are $1/2$ is to use automatic sequences that do not correlate with periodic sequences. These sequences are highly Gowers uniform (see~\cite{BKM}) and, thus, should be oscillating sequences of order $d$ for all $d\geq 1$ as defined in~\cite{FJ,JPro}. We will explore them in more details in a later paper. 
\end{remark}

 \medskip
 \medskip
 \medskip
 \medskip
 \medskip
   
  %

 \end{document}